\title{Cubes and Their Centers}
\author{Riley Thornton}
\address{\small \emph{Department of Mathematics, University of California at Los Angeles \\ Los Angeles, CA, USA 90095\\
e-mail: personpants@math.ucla.edu}}
\thanks{This research was conducted as part of the Budapest Semesters in Mathematics Undergrauate Research Experience Program and was advised by Tam\'as Keleti.}
 \subjclass[2010]{Primary: 05B30, 28A78; Secondary: 05D99, 52C35 }
\keywords{Cubes, cube skeletons, orthoplexes, Haussdorff dimenion, box dimension, packing dimension}
\newtheorem{thm}{Theorem}[section]
\newtheorem{lem}[thm]{Lemma}
\newtheorem{cor}[thm]{Corollary}
\theoremstyle{definition}
\theoremstyle{remark}
\newcommand{\R}{\mathbb{R}}
\newcommand{\N}{\mathbb{N}}
\newcommand{\Z}{\mathbb{Z}}
\newcommand{\Q}{\mathbb{Q}}
\newcommand{\inv}{^{-1}}
\newcommand{\setc}[2]{\mbox{\scalebox{.625}{${\left[\begin{array}{c} {#1} \\ {#2} \end{array}\right]}$}}}
\newcommand{\di}[1]{[[{#1}]]}
\begin{document}
\maketitle

\begin{abstract}
We study the relationship between the sizes of sets $B,S$ in $\R^n$ where $B$ contains the $k$-skeleton of an axes-parallel cube around each point in $S$, generalizing the results of Keleti, Nagy, and Shmerkin \cite{op} about such sets in the plane. We find sharp estimates for the possible packing and box-counting dimensions for $B$ and $S$. These estimates follow from related cardinality bounds for sets containing the discrete skeleta of cubes around a finite set of a given size. The Katona--Kruskal Theorem from hypergraph theory plays an important role. We also find partial results for the Haussdorff dimension and settle an analogous question for the dual polytope of the cube, the orthoplex.

\end{abstract}

\section{Introduction and Statements of Results} \label{sec:intro}

\subsection{Introduction}

In \cite{op} the authors find sharp bounds for the Hausdorff, box-counting, and packing dimensions of sets $S,B\subseteq \R^2$ where $B$ contains either the vertices or boundary of an axes-parallel square around every point in $S$, and cardinality bounds for finite sets satisfying discrete versions of these conditions. Their results are summarized in the following table. If $S$ has size $s$ for the given notion of size, then a sharp lower bound for the size of $B$ is given in terms of $s$:

\bgroup
\def\arraystretch{1.5}
\begin{center}
\begin{tabular}{c|c|c}

Notion of Size & Vertex problem & Boundary problem\\
 & ($0$-skeleton of a $2$-cube) & ($1$-skeleton of a $2$-cube) \\ \hline
$\dim_P$ & $\frac{3}{4}s$ & $1+\frac{3}{8}s$ \\
$\overline{\dim}_B$ & $\frac{3}{4}s$ & $\max\{1,\frac{7}{8}s\}$ \\
$\underline{\dim}_B$ & $\frac{3}{4}s$ & $\max\{1,\frac{7}{8}s\}$ \\
$\dim_H$ & $\max\{0,s-1\}$ & 1 \\
$|\cdot |$ & $\Omega(s^{\frac{3}{4}})$ & $\Omega(s^{\frac{7}{8}-\epsilon})$ 

\end{tabular}
\end{center}
\egroup


We want to find similar bounds for sets $B,S\subseteq \R^n$ where $B$ contains the $k$-skeleton of a $n$-cube around each point in $S$. That is, the goal of this paper is to extend the above table into an $\N\times \N$ array. We manage this in every case except for the Hausdorff dimension.

The authors in \cite{op} were inspired by Bourgain \cite{Bourgain} and Marstrand's \cite{Marstrand} results about packing circles, which in turn completed work done by E. Stein work on $n$-spheres for $n\geq 3$ \cite{nballs} The problem we study, the original problem in \cite{op}, and the problems settled by Stein and (independently) Bourgain and Marstrand are members of the abundant family of ``Kakeya type" problems. The problem of minimizing size, in the sense of some measure of fractal dimension, over some family of sets in $\R^d$ occurs commonly in geometric measure theory and other areas of analysis. The analagous discrete problem of optimizing cardinality with respect to some combinatorial constraint is the domain of extremal combinatorics. This paper and \cite{op} illuminate some relationships between these two areas of research.

\subsection{Notation}

Throughout, lowercase latinate letter stand for integers unless otherwise specified. The expression $\di{a,b}$ stands for the discrete interval $\{a,a+1,...,b-1,b\}$ and $\setc {n}{k}$ for $k$ element subsets of $\di{1,n}$. For $x\in \R^n, I\in \setc n k$, $x_I$ is the vector in $\R^k$ formed by taking the entries of $x$ indexed by $I$. For any finite set $A$, $|A|$ is the number of elements of $A$.

When we consider sets $X\subseteq \R^m$ and $Y\subseteq \R^n$, and functions $f,g$ possibly dependent on $X,Y$, we will say $f\leq O(g)$ to mean that $f\leq Cg$, and $g\geq\Omega(f)$ to mean $g\geq cf$ for some constants $C,c\in \R^+$ depending only on $m,$ and $n$. Note that $f\leq O(g)$ iff $g\geq\Omega(f)$.

We use the convention that $0\not\in\N.$ Unless otherwise specificed, a cube will mean a cube with all sides parallel to the axes, that is, a set of the form $x+[a,b]^n$ for some $x\in\R^n$ and $a<b\in \R$. The $k$-skeleton of the cube $x=[a,b]^n$ is the set $x+\cup_{I\in\setc{n}{k}}\prod_{i=1}^{n} A_{i,I}$ where $A_{i,I}$ is $[a,b]$ if $i\in I$ and $\{a,b\}$ otherwise. A discrete cube is a set of the form $x+\di{0,r}^n$ for some $x\in \Z^n$ and $r\in\N$. The discrete $k$-skeleton of the discrete cube $x+\di{0,r}^n$ is then $x+\cup_{I\in\setc{n}{k}} \prod_{i=1}^n A_{i,I}$ where $A_{i,I}$ is $\di{0,r}$ if $i\in I$ and $\{0,r\}$ otherwise.

The notions of dimensions we consider are the Hausdorff dimension, $\dim_H$, the upper and lower box-counting dimension, $\overline{\dim}_B$ and $\underline{\dim}_B$ respectively, and the packing dimension $\dim_P$. When upper and lower box-counting dimension agree for a set, we just write $\dim_B$. See \cite{packing} for definitions and details.

\subsection{Main Results}

The main results of the paper are the generalizations of the bounds for box-counting and packing dimensions in \cite{op}:

\begin{thm}\label{int:pbbounds} For any $0\leq k<n$ and any sets $B,S\subseteq \R^n$ such that $B$ contains the $k$-skeleton of a cube around every point in $S$,
\begin{enumerate}
\item $\dim_P(B)\geq k+\frac{(n-k)(2n-1)}{2n^2}dim_P(S)$, and
\item $\overline{\dim}_B(B)\geq \max\left\{k,\left(\frac{k}{n}+\frac{(n-k)(2n-1)}{2n^2}\right)\overline{\dim}_B(S)\right\}
=\max\left\{k,\left(1-\frac{n-k}{2n^2}\right)\overline{\dim}_B(S)\right\}$, and similarly $\underline{\dim}_B(B)\geq\max\left\{k,\left(1-\frac{n-k}{2n^2}\right)\underline{\dim}_B(S)\right\}$.
\end{enumerate}
\end{thm}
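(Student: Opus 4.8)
The plan is to reduce the continuous statement to a discrete cardinality bound and then invoke the Katona–Kruskal machinery promised in the abstract. The core mechanism should be: if $S$ has box/packing dimension roughly $d$, then at scale $\delta = 2^{-m}$ we can find many $\delta$-separated points in $S$ (for packing, a suitable subset via a weak regularization / Frostman-type argument at a fixed scale; for box counting, directly from the definition), and around each such point $B$ contains a $\delta$-skeleton of some cube of some side length $r$. By pigeonholing we may assume all these cubes have comparable side length $r$, say $r \in [2^{-j}, 2^{-j+1}]$, so after rescaling by $2^j$ everything lives on an integer grid and the discrete $k$-skeleton condition applies. The heart of the matter is a discrete lemma (which I would expect to appear earlier in the paper, or to prove here): if a finite $P \subseteq \di{1,N}^n$ has the property that $Q \subseteq \di{1,N}^n$ contains the discrete $k$-skeleton of a discrete cube around every point of $P$, then $|Q| \geq \Omega\!\left(|P|^{k/n + (n-k)(2n-1)/(2n^2)}\right)$, equivalently $|Q| \geq \Omega(|P|^{1 - (n-k)/(2n^2)})$.

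First I would set up the discrete lemma carefully. Given $P$, split coordinates: the $k$-skeleton of a cube around $x$ is a union over $I \in \setc{n}{k}$ of sets that are full cubes in the $I$-directions and two-point sets $\{x_i, x_i+r\}$ in the complementary directions. The natural move is to project onto the $n-k$ "frozen" coordinates: for each $I$, project $P$ and $Q$ onto the coordinates in $\di{1,n}\setminus I$, so that $Q_{I^c}$ must contain, around each projected point, the $(n-k)$-dimensional structure $\prod_{i \notin I}\{x_i, x_i + r\}$ — i.e. the vertex set of a small $(n-k)$-cube. This is exactly the higher-dimensional analogue of the vertex problem, and the Katona–Kruskal (Kruskal–Katona) theorem is the right tool to lower-bound the size of a set of vertices of axis-parallel cubes in terms of the number of cube-centers: a set realizing many $(n-k)$-cube corner-tuples cannot be too small, because the corner-tuples form a structured hypergraph whose shadow is controlled. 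I would then combine the $\binom{n}{k}$ projected estimates — a point of $Q$ contributes to many projections, so summing and applying Cauchy–Schwarz / Hölder to undo the overcounting yields the exponent $k/n + (n-k)(2n-1)/(2n^2)$. The appearance of $(2n-1)/(2n^2)$ strongly suggests the optimization $\min_t (t\cdot\text{(something)} + \dots)$ over how the mass of $P$ is distributed, or a two-term bound $|Q| \gtrsim \max\{|P|^{1/2}\cdot(\dots), \dots\}$ balanced at the stated exponent; I would track the worst case of that optimization explicitly.

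Then I would pass back to the continuous setting. For $\overline{\dim}_B$: the trivial bound $\overline{\dim}_B(B) \geq k$ holds since $B$ contains a single $k$-skeleton, a set of dimension $k$. For the nontrivial term, take $\delta$ small, get $\gtrsim \delta^{-\overline{\dim}_B(S)+\epsilon}$ many $\delta$-separated points in $S$, pigeonhole the cube side lengths into a dyadic range, rescale to the integer grid $\di{1,N}^n$ with $N \approx \delta^{-1}\cdot(\text{side length})$, apply the discrete lemma to conclude the rescaled $B$-set meets $\gtrsim (\#\text{points})^{1 - (n-k)/(2n^2)}$ grid cubes, hence $N_\delta(B) \gtrsim \delta^{-(1-(n-k)/(2n^2))\overline{\dim}_B(S) + O(\epsilon)}$, and let $\epsilon \to 0$. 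The $\underline{\dim}_B$ case is identical along the subsequence of scales realizing the lower box dimension. For $\dim_P$, I would use the standard characterization of packing dimension as a modified upper-box dimension (a countable stabilization / sup over subsets), so the box-counting estimate upgrades: write $S = \bigcup_i S_i$; some $S_i$ has $\overline{\dim}_B(S_i)$ close to $\dim_P(S)$; apply the box-counting result to $S_i$ and $B$; the additive "$+k$" rather than "$\times k/n$" in part (1) reflects that packing dimension is genuinely a supremum and the two regimes ($k$-skeleton dimension vs. the scaling term) no longer compete but add — I would need to check that the passage through the modified box dimension produces exactly the clean additive form stated.

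The main obstacle I anticipate is the discrete lemma and, specifically, extracting the exact constant $(2n-1)/(2n^2)$ from the Kruskal–Katona bound combined with the overcounting-correction step; getting the sharp exponent (not just some exponent $>k/n$) requires running the Hölder/optimization bookkeeping with care, and checking that the rescaling-and-pigeonholing in the continuous reduction loses only $o(1)$ in the exponent as $\delta\to 0$. A secondary subtlety is the packing-dimension step: ensuring that the "$+k$" term survives the countable-union characterization cleanly, rather than degrading to the multiplicative form.
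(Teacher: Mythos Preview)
Your plan for part (2) is essentially the paper's approach: reduce to a discrete cardinality bound at scale $2^{-m}$ and invoke the discrete lemma $|B|\ge\Omega(|S|^{\alpha})$ for every $\alpha<1-\frac{n-k}{2n^2}$ (the paper does this without pigeonholing side lengths, since the discrete bound already handles cubes of all integer side lengths in $\Z^n$). The discrete lemma itself is not a one-shot Kruskal--Katona application as you sketch; in the paper it is an \emph{iterative} two-case argument (large cubes versus small cubes) showing that if the exponent $\alpha$ works then so does $f(\alpha)$ for an explicit $f$ whose attracting fixed point is $1-\frac{n-k}{2n^2}$. Your Cauchy--Schwarz/H\"older balancing idea is vague on exactly this point, but the overall architecture for (2) is right.

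The real gap is in part (1). Upgrading the box-counting bound through the modified-box characterization of packing dimension cannot produce the additive form $k+\frac{(n-k)(2n-1)}{2n^2}\dim_P(S)$: at best it yields $\max\bigl\{k,\bigl(1-\frac{n-k}{2n^2}\bigr)\dim_P(S)\bigr\}$, which is \emph{strictly weaker} for every $\dim_P(S)\in(0,n)$. (Also, your description decomposes $S$ rather than $B$; to lower-bound $\dim_P(B)$ you must control \emph{every} countable cover of $B$.) The paper obtains the genuine ``$+k$'' by a different mechanism: replace $B$ by $B'=B+\Q^n$, which has the same packing dimension by countable stability but now contains, for each $k$-face of each cube, the entire $k$-plane through it. Thus $B'\supseteq\bigcup_\pi \pi(A_\pi\times\R^k)$ for suitable $A_\pi\subseteq\R^{n-k}$, and the product rule gives $\dim_P(B')\ge\dim_P\bigl(\bigcup_\pi A_\pi\bigr)+k$. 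The set $A=\bigcup_\pi A_\pi$ collects the projections of the $k$-faces onto their normal $(n-k)$-planes, and $A,S$ satisfy the hypotheses of the $(n,n-k)$-dimensional lemma, whose packing-dimension version then gives $\dim_P(A)\ge\frac{(n-k)(2n-1)}{2n^2}\dim_P(S)$. This rational-translate-and-factor trick is the missing idea in your proposal.
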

We also have constructions showing that these bounds are sharp in the sense of the following theorem:

\begin{thm}\label{int:pbcon}
Given any $0\leq k<n$, $s\in[0,n]$, there are compact sets $B,S, B', S'\subseteq\R^n$ with $\dim_P(S)=\dim_B(S')=s$, where $B$ and $B'$ contain the $k$-skeleton of a cube around every point in $S$ and $S'$ respectively, and
\begin{enumerate}
\item $\dim_P(B)=k+\frac{(n-k)(2n-1)}{2n^2}s$, and
\item $\dim_B(B')=\max\left\{\left(1-\frac{n-k}{2n^2}\right)s, k\right\}$.
\end{enumerate}
\end{thm}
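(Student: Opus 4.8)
The strategy is to reduce everything to a discrete, combinatorial construction and then lift it to the continuum by iterated ``variable--speed'' constructions, following the template of \cite{op} in the plane; throughout I read ``a cube around $p$'' as ``a cube centered at $p$.'' For $k=0$ a single construction serves both parts (there $\dim_P=\dim_B$ along it and the two target values coincide), so the genuine work concerns $k\ge1$.

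\emph{The discrete gadget.} The heart of the matter is: for each $m$, a full grid $\Sigma=\di{0,m}^n$ together with a set $\mathfrak B$ containing the discrete $k$-skeleton of a discrete cube centered at each point of $\Sigma$, with $|\mathfrak B|\le m^{\,n(1-\frac{n-k}{2n^2})+o(1)}=m^{\,n-\frac{n-k}{2n}+o(1)}$. One attaches to the center $p$ a cube whose side length is prescribed by the low--order base-$q$ digits of the coordinates of $p$, chosen so that in $n-k$ designated ``thin'' directions the two skeleton values $p_i\pm r_p$ are driven onto a fixed coarse arithmetic progression. Since a single side length must serve all $n$ directions of the cube, only the $\tfrac{n-k}{n}$ fraction of directions can be economized, and—splitting the digit budget carefully among coordinates, the higher--dimensional version of the one--dimensional trick ``replace $\di{0,q}$ by the coarse sub--progression of step $\sqrt q$''—each thin direction yields a saving of $\tfrac1{2n}$ rather than $\tfrac12$; this is the combinatorial origin of $1-\tfrac{n-k}{2n^2}$. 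For $k\ge1$, $\mathfrak B$ is a union of $k$-dimensional faces, and one must check these overlap enough that the $\asymp m^{k}$ cells of a single face do not dominate the count until $\bigl(1-\tfrac{n-k}{2n^2}\bigr)\sigma$ drops below $k$ (this produces the outer $\max\{k,\dots\}$). This overlap analysis is where a genuine $m^{o(1)}$ loss—the analogue of the ``$\tfrac78-\epsilon$'' of \cite{op}—appears and must be shown harmless in the limit.

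\emph{The box construction.} Theorem~\ref{int:pbbounds}(1), applied to $(B',S')$, forces a structural feature: $\dim_P(B')\ge k+\tfrac{(n-k)(2n-1)}{2n^2}\dim_P(S')$ while $\dim_P(B')\le\overline{\dim}_B(B')=\max\{k,(1-\tfrac{n-k}{2n^2})s\}$, and since the latter is strictly smaller than $k+\tfrac{(n-k)(2n-1)}{2n^2}s$ whenever $k\ge1$ and $0<s<n$, we must take $\dim_P(S')=0$; in particular $S'$ cannot be a Cantor set. I therefore take $S'$ to be a countable compact set of box dimension exactly $s$: a single accumulation point together with a sequence of finite configurations, each a rescaled copy of the full--grid gadget $\di{0,m_j}^n$, placed in boxes shrinking to the accumulation point, with the $m_j$ and the relative positions/resolutions chosen (a standard if delicate juggling of scales, so that at every scale the covering number is $\delta^{-s+o(1)}$) to force $\underline{\dim}_B(S')=\overline{\dim}_B(S')=s$; countability then gives $\dim_P(S')=0$. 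Applying the gadget inside each configuration and letting $B'$ be the union of the resulting rescaled copies of $\mathfrak B$ (plus one fixed $k$-skeleton about the accumulation point) yields $\dim_B(B')=\max\{k,(1-\tfrac{n-k}{2n^2})s\}$ by the same bookkeeping; both sets are closed as unions of closed blocks shrinking to a point.

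\emph{The packing construction ($k\ge1$).} Here $\dim_P(S)=s$ must be genuine, so $S$ is built as a (not self--similar) iterated construction interleaving two kinds of levels: ``dense'' levels at which each surviving cube is refined into a full grid—on which the gadget acts at full efficiency—and ``sparse'' levels at which a surviving cube is merely subdivided (a $k$-skeleton then subdivides, contributing rate exactly $k$). The interleaving is tuned so that the accumulated ratio $\log(\#\ S\text{-pieces})/\log(1/\mathrm{scale})$ has $\limsup$ equal to $s$ and $\dim_P(S)=s$; the analogous quantity for $B$ then has $\limsup=\dim_P(B)=k+\tfrac{(n-k)(2n-1)}{2n^2}s$ automatically, because the gadget's output rate—restricted to the two extreme inputs ``do nothing'' (rate $k$) and ``full grid'' (rate $(1-\tfrac{n-k}{2n^2})n=n-\tfrac{n-k}{2n}$)—is exactly the affine interpolation $k+\tfrac{(n-k)(2n-1)}{2n^2}(\cdot)$ of the input dimension, so the extremal ratios of $B$ and of $S$ are related by that same affine map. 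A uniform iteration of the box--optimal gadget is \emph{impossible} here: it would make $B$ self--similar with $\dim_P(B)=\dim_B(B)=\max\{k,(1-\tfrac{n-k}{2n^2})s\}<k+\tfrac{(n-k)(2n-1)}{2n^2}s$, contradicting Theorem~\ref{int:pbbounds}(1). The speed mixing is precisely what reconciles the box--optimal discrete gadget with the larger packing dimension.

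\emph{Main obstacle.} The substantive work is entirely in the discrete gadget—the explicit construction and the bound $|\mathfrak B|\le|\Sigma|^{\max\{k/n,\,1-(n-k)/2n^2\}+o(1)}$, in particular the quantitative control of the overlaps of the $k$-faces when $k\ge1$, where the $o(1)$ loss is real and must be tracked and then absorbed by letting the gadget parameters run to infinity along the scales of the iterated construction. Everything in the two lifts is bookkeeping by comparison: making the upper and lower box dimensions coincide at the prescribed value, keeping the limit sets compact (closures of telescoping unions), and—for the box construction—respecting the rigid constraint $\dim_P(S')=0$ forced by Theorem~\ref{int:pbbounds}(1).
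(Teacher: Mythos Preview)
Your box-counting construction is essentially the paper's: a countable $S'$ made of rescaled discrete grids accumulating at a point, with the discrete gadget applied blockwise. Your observation that Theorem~\ref{int:pbbounds}(1) obstructs taking $\dim_P(S')=s$ is correct and illuminating, though strictly speaking it only forces $\dim_P(S')<s$, not $\dim_P(S')=0$; a countable $S'$ is simply the cleanest way to meet the constraint, and that is what the paper does.

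Your packing construction, however, takes a genuinely different and harder route than the paper. The paper does not iterate in $\R^n$ at all. It builds, once and for all, compact sets $A,T\subseteq\R$ (as sums $\sum_i \beta_i\,i^{-2n}D_{i,n}$ and $\sum_i \beta_i\,i^{-2n}\di{1,i^{2n}-1}$, in the style of \cite{op}) with $\dim_H=\dim_P=\dim_B$ equal to $\tfrac{2n-1}{2n}t$ and $t$ respectively, and with the one-dimensional property that any $n$ points of $T$ share a common $r$ with all $x_j\pm r\in A$. It then simply sets $S=T^n$ and $B=\bigcup_{I\in\setc{n}{n-k}}\prod_i A_{I,i}$, where $A_{I,i}$ is $A$ for $i\in I$ and an interval otherwise. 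Because all three dimensions of $A$ and $T$ coincide, the product rules pin down $\dim_P(S)$ exactly and bound $\dim_P(B)$ from above, while Theorem~\ref{int:pbbounds}(1) supplies the matching lower bound. There is no interleaving, no speed-mixing, and no need to analyse how $k$-faces refine under iteration.

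Your affine-interpolation argument is correct at the level of box-counting exponents along a homogeneous iteration, and with enough care (homogeneity at every level so that $\dim_P=\overline{\dim}_B$ for both $S$ and $B$, and a clear meaning for ``a surviving cube is merely subdivided'' on the $S$ side) your route could be completed. What the paper's product approach buys is that the packing equalities come for free from the one-dimensional building blocks; your approach has to manufacture $\dim_P(S)=s$ directly in $\R^n$, which is additional work you have only sketched.

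One small correction: the discrete gadget (the digit construction) hits the exponent $1-\tfrac{n-k}{2n^2}$ exactly, with no $o(1)$ loss. The $\epsilon$-loss you allude to occurs only in the discrete \emph{lower} bound (Theorem~\ref{int:discbound}) and is therefore irrelevant to the constructions here.
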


The above results are analytic extensions of the following four theorems for discrete cubes:

\begin{thm}\label{int:discbound}
If $B,S\subseteq\Z^n$ are finite, and $B$ contains the discrete $k$-skeleton of a cube around every point in $S$, then for every $\alpha<1-\frac{n-k}{2n^2}$,
\[|B|\geq \Omega(|S|^\alpha).\]
\end{thm}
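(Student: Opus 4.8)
The plan is to prove the estimate by a slicing and double-counting argument in which the Kruskal--Katona theorem supplies the sharp combinatorial input. Two preliminary reductions. Since for $j\le k$ the discrete $j$-skeleton of a cube is contained in its $k$-skeleton, $B$ automatically contains the vertex set of a cube around every point of $S$, so the pure vertex estimate (the case $k=0$) is available; the content of the theorem is to beat that bound using the genuinely $k$-dimensional faces. Moreover, by dyadic pigeonholing in the side lengths one may assume that all cubes have side length in a single range $[R,2R)$: when $k\ge 1$ the edges of the $k$-skeleton force $|B|\gtrsim R$, so only $O(\log|S|)$ scales are relevant, which costs a factor polylogarithmic in $|S|$ and is absorbed by the strict inequality $\alpha<1-\frac{n-k}{2n^2}$.

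The main step is to slice $B$ by the hyperplanes $\{x_n=c\}$. If $p=(p',t)\in S$ has side length $\approx R$, then each of the two extreme slices $\{x_n=t\pm R\}$ of $B$ contains a full $(n-1)$-dimensional $k$-skeleton centred at $p'$, while each of the $\approx R$ intermediate slices $\{x_n=c\}$ with $|c-t|<R$ contains an $(n-1)$-dimensional $(k-1)$-skeleton centred at $p'$ of the same radius. Feeding lower bounds of the same shape back into the slices and summing $|B|=\sum_c|B^{(c)}|$ turns this into a lower bound for $|B|$ in terms of $R$, the number of occupied slices, and the numbers of distinct $(n-1)$-dimensional centres that each slice sees. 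What remains, and it is the heart of the matter, is to optimise this inequality: one must weigh the few ``thick'' extreme slices against the many ``thin'' intermediate ones, track how the centres of $S$ are distributed over slices and over projected positions, and combine the information coming from all $n$ coordinate directions simultaneously rather than one at a time. This is the point at which I expect Kruskal--Katona to be used: the $k$-faces of a cube are indexed by the $k$-subsets of $[n]$, and bounding the family of $k$-subsets ``realised'' inside $B$ by the size of its $(k-1)$-shadow is exactly the sort of estimate that upgrades the per-coordinate bounds to the exponent $1-\frac{n-k}{2n^2}$ rather than a weaker one.

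The principal obstacle is precisely this optimisation. A one-coordinate-at-a-time induction leaks a factor in the exponent at each slicing step, so the proof cannot be a naive recursion; the combined, all-coordinates count has to be arranged so that Kruskal--Katona can be applied globally, after which a convexity argument over the slice sizes extracts the sharp power. A secondary difficulty is bookkeeping: the scale reduction must be set up so that the errors from the $O(\log|S|)$ dyadic blocks do not compound, and one must check that the recursion closes with the stated exponent in the base cases $n=1$ and --- running its own sub-recursion with no intermediate slices --- $k=0$.
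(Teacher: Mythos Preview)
Your proposal is not a proof but a plan, and the step you yourself flag as ``the heart of the matter'' --- the optimisation that turns the slice-by-slice information into the exponent $1-\frac{n-k}{2n^2}$ --- is never carried out. You correctly observe that a coordinate-at-a-time recursion leaks in the exponent, but you do not supply the global argument that would stop the leak; saying that Kruskal--Katona ``is exactly the sort of estimate'' needed is not the same as exhibiting the family of sets to which it applies and deriving the sharp power. Moreover, your guess about \emph{how} Kruskal--Katona enters is off: in the paper it is not applied to the index sets $I\in\setc{n}{k}$ of faces but, via the $(n,\ell)$-dimensional lemma, to the sets of coordinate \emph{values} of vertices, comparing $\ell$-tuples to $n$-tuples. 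That use has nothing to do with shadows of face-index sets.

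The paper takes a quite different route from your hyperplane slicing. Rather than fixing a single scale and recursing on $n$, it bootstraps the exponent: one shows that if $|B|\ge\Omega(|S|^\alpha)$ always holds, then so does $|B|\ge\Omega(|S|^{f(\alpha)})$ for an explicit increasing function $f$ with fixed point $\beta=1-\frac{n-k}{2n^2}$, and iterates. The improvement from $\alpha$ to $f(\alpha)$ comes from a large/small dichotomy on the side lengths (threshold $|S|^{R(\alpha)}$): when many cubes are large, the $(n,n-k)$-dimensional lemma bounds the number of distinct vertex projections, and each such projection carries a $k$-face with $\gtrsim|S|^{kR(\alpha)}$ lattice points; when many cubes are small, one tiles $\Z^n$ by boxes of the threshold size and applies the assumed bound in each tile. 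Your dyadic pigeonholing and ``extreme versus intermediate slices'' dichotomy gesture at a similar large/small tension, but the paper's version is organised so that the two cases give the \emph{same} improved exponent $f(\alpha)$, which is what makes the iteration close. Until you can produce an analogous balancing in your slicing framework, the proposal remains a sketch rather than a proof.
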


\begin{thm}\label{int:nllem}
If $0<\ell\leq n$, $A\subseteq\R^\ell$, $S\subseteq \R^n$ are finite sets such that
\begin{equation} \label{cond2.4box}\forall x\in S\;\exists r\in\R^+\;\forall I\in \setc n \ell\;\forall \sigma\in \{-1,1\}^\ell:\;\; x_I+r\sigma\in A,
\end{equation}
then $|A|\geq \Omega(|S|^{\ell(2n-1)/2n^2}).$
\end{thm}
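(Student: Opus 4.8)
The plan is to reduce the multi-dimensional skeleton condition to an iterated application of a single "one-coordinate" estimate, using the hypergraph machinery (Katona--Kruskal) advertised in the abstract. The key observation is that condition~\eqref{cond2.4box} is symmetric in all $\ell$-subsets $I$, so for each $x\in S$ we get a whole combinatorial cube of points in $A$: the $2^\ell$ corner points $x_I + r\sigma$ for $\sigma\in\{-1,1\}^\ell$, ranging over all $I\in\setc n\ell$. First I would fix the radius: by pigeonholing over a dyadic (or arbitrary) range of $r$-values, pass to a subset $S'\subseteq S$ of comparable size on which a single common $r$ works, or more carefully keep track of how many distinct radii occur, since the radius is itself a free parameter that inflates $|A|$. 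Then for a fixed $r$, the set $A$ must contain, for each $x\in S'$ and each $I$, the $2^\ell$ vertices of an axis-parallel $\ell$-cube of side $2r$ centered at $x_I$.

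Next I would set up the counting via projections. For $I\in\setc n\ell$, let $\pi_I\colon\R^n\to\R^\ell$ be the coordinate projection and let $S_I=\pi_I(S')$. The condition says $A$ contains the discrete $\ell$-dimensional vertex-skeleton (the $0$-skeleton) of a cube around every point of each $S_I$. The core of the argument is the case $\ell=n$ of \emph{that} statement — i.e. a bound of the form: if $A\subseteq\R^n$ contains the $2^n$ vertices of an axis-parallel cube around every point of a finite $T\subseteq\R^n$, then $|A|\geq\Omega(|T|^{(2n-1)/2n^2})$. This is exactly the exponent $\ell(2n-1)/2n^2$ with $\ell=n$, so the theorem for general $\ell<n$ should follow by taking $T=S_I$ for a well-chosen $I$: since the $n$ projections $S_I$ cannot all be much smaller than $|S|^{\ell/n}$ (some projection to $\ell$ coordinates of an $n$-dimensional set has size at least $|S|^{\ell/n}$, by Loomis--Whitney / the Katona--Kruskal shifting inequality), pick that $I$ and apply the $\ell=n$-in-$\R^\ell$ bound to get $|A|\geq\Omega\bigl((|S|^{\ell/n})^{(2\ell-1)/2\ell^2}\bigr)$ — wait, this gives the wrong exponent. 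So instead the reduction must be to the full-dimensional statement \emph{in $\R^n$ with $2n-1$ in the exponent}: one applies a vertex-cube bound where the relevant "dimension" in the exponent stays $n$, not $\ell$. The cleanest route is to prove directly, by the Katona--Kruskal/Loomis--Whitney method, that for the system $\{x_I+r\sigma\}$ the trace of $A$ on the coordinate hyperplanes forces $|A|^{2n^2}\geq\Omega(|S|^{\ell(2n-1)})$ after summing the $n$ two-sided slices.

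Concretely, the key step I expect to carry the proof is the following averaging-plus-slicing estimate, modeled on the Loomis--Whitney inequality and its discrete (Katona--Kruskal) refinement. Write each point of $A$ by its $n$ coordinates; the condition gives, for each $x\in S'$, a point of $A$ in every one of the $n$ "axis-extreme" positions, and crucially the map $x\mapsto(\text{the }2^\ell\text{ vertices})$ is injective enough that $|S'|\leq\prod$ of slice-sizes raised to appropriate powers. The $2n-1$ in the numerator and $2n^2$ in the denominator suggest: there are $2n$ "directions" ($n$ axes, each with a $+$ and $-$ side) but one degree of freedom is lost to the common center/radius, and one takes the geometric mean over the $\binom n\ell$ choices of $I$, yielding the exponent $\frac{1}{2n}\cdot\frac{2n-1}{n}\cdot\ell = \frac{\ell(2n-1)}{2n^2}$. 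So the plan is: (i) fix $r$ by pigeonhole; (ii) for each $I$, bound $|S_I|$ in terms of the sizes of the $n$ coordinate slices of $A$ using a weighted Loomis--Whitney/Katona--Kruskal inequality that accounts for the two-sidedness of the skeleton; (iii) multiply these $\binom n\ell$ inequalities together and use the Loomis--Whitney bound $\prod_I|S_I|\geq|S'|^{\binom{n-1}{\ell-1}}$ (or the Katona--Kruskal lower bound for shadows) to extract $|S'|$; (iv) bound $|S'|\geq\Omega(|S|)$ or handle the radius count separately. The main obstacle will be step~(ii): getting the exponents exactly right in the slicing inequality, since the extra $-1$ in $2n-1$ comes from a subtle cancellation between the center-of-cube freedom and the skeleton structure, and this is precisely where the Katona--Kruskal theorem (rather than plain Loomis--Whitney) is needed to avoid losing a factor.
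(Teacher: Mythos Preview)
Your proposal has a genuine gap. The pigeonhole-on-radius step (i) is a nonstarter: there may be as many distinct radii as points of $S$, so passing to a single $r$ can shrink $S'$ to a singleton; the paper never fixes $r$. More importantly, the reduction you attempt---projecting $S$ down to $S_I\subseteq\R^\ell$ and applying an $\ell$-dimensional vertex bound---gives the wrong exponent, as you yourself noticed, and the vague Loomis--Whitney averaging in (ii)--(iii) does not repair it: no product of projection inequalities will manufacture the $2n$ (rather than $2\ell$) in the exponent, because the $n$ enters through the fact that a \emph{single} $r$ works simultaneously for all $\binom{n}{\ell}$ projections of a given $x\in S$, and this coupling is destroyed the moment you project.

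The paper goes in the opposite direction: rather than project $S$ down to $\R^\ell$, it lifts $A$ up to $\R^n$. Set $B=\{x\in\R^n: x_I\in A\text{ for every }I\in\setc n\ell\}$. Condition~(\ref{cond2.4box}) says precisely that $B$ contains all $2^n$ vertices $x+r\sigma$, $\sigma\in\{-1,1\}^n$, of an $n$-cube around each $x\in S$. An inductive slicing argument in $\R^n$ (split into hyperplanes and transverse lines, and handle two cases according to whether most of $S$ sits in sparse or dense hyperplane slices) then gives $|B|\geq\Omega(|S|^{(2n-1)/(2n)})$; this is where the $2n-1$ originates. Katona--Kruskal enters only at the very end, and not on projections of $S$: after translating $A$ so that coordinates are distinct and symmetrizing under coordinate permutations, one regards $\hat B=\{\{x_1,\dots,x_n\}:x\in B\}$ as a family of $n$-sets whose $\ell$-element shadows all lie in $\hat A=\{\{x_1,\dots,x_\ell\}:x\in A\}$, and Lov\'asz's form of the theorem gives $|A|\geq\Omega(|B|^{\ell/n})$. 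Chaining the two bounds yields the exponent $\frac{\ell}{n}\cdot\frac{2n-1}{2n}=\frac{\ell(2n-1)}{2n^2}$.
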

Geometrically (\ref{cond2.4box}) above means that $A$ contains the vertices of an $\ell$-cube around each point in every projection of $S$ onto an axis-spanned plane. Intuitively, $A$ collects the non-$(n-k)$-face cofactors of cubes around each point in $S$. We also have constructions showing these are sharp:


\begin{thm}\label{int:disccon}
For every  $0\leq k<n$ and every $p\geq 0$, there are finite $B,S\subseteq \Z^n$ such that $B$ contains the discrete $k$-skeleton of a cube around every point in $S$, $|S|=p$, and $|B|\leq O\left(|S|^{1-\frac{n-k}{2n^2}}\right)$.
\end{thm}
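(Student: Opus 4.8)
The plan is as follows. Since the set $B$ is, by construction, simply the union over $x\in S$ of the discrete $k$-skeleton of whichever cube we place around $x$, the containment requirement in the statement holds automatically, and the whole problem is to choose $S$ with $|S|=p$ and to choose the cubes so that the skeleta overlap enough to force $|B|\le O(|S|^{\theta})$, where $\theta:=1-\frac{n-k}{2n^2}$. First I would remove the dependence on $p$: it suffices to construct, for one unbounded sequence $p_1<p_2<\dots$ with $p_{j+1}/p_j=O(1)$, sets $S_j$ with $|S_j|=p_j$ and a choice of cubes giving $|B_j|\le O(p_j^{\theta})$. For a general $p$, pick $j$ minimal with $p_j\ge p$, take $S\subseteq S_j$ with $|S|=p$ and keep the same cubes; the resulting $B$ is contained in $B_j$, so $|B|\le|B_j|=O(p_j^{\theta})=O(p^{\theta})$ since $p_j=O(p)$. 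This reduction is routine.

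For the construction I would exploit the identity
\[
\theta\;=\;1-\frac{n-k}{2n^2}\;=\;\frac{k}{n}\;+\;\frac{(n-k)(2n-1)}{2n^2}.
\]
The first summand is the dimension count of a full $k$-dimensional coordinate grid, and the second is exactly the exponent of Theorem~\ref{int:nllem} with $\ell=n-k$. Accordingly I would (1) build a \emph{sharp} instance of Theorem~\ref{int:nllem} for $\ell=n-k$: sets $S\subseteq\Z^n$, $A\subseteq\Z^{n-k}$ and a radius $r(x)$ for each $x\in S$ with $x_J+r(x)\sigma\in A$ for all $x\in S$, $J\in\setc{n}{n-k}$ and $\sigma\in\{-1,1\}^{n-k}$, and with $|A|=O(|S|^{(n-k)(2n-1)/2n^2})$; and I would moreover arrange $S$ to be coordinate-symmetric and product-like, so that the projection of $S$ onto any $k$ of its coordinates has cardinality $\Theta(|S|^{k/n})$, with the radii $r(x)$ small relative to the spacing of $S$ in those coordinates. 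Then (2) I would put around each $x\in S$ the cube of radius $r(x)$ and let $B$ be the union of the resulting $k$-skeleta. For $I\in\setc{n}{k}$, the $I$-face part of the skeleton of the cube around $x$ is the set of $y$ with $y_I\in x_I+\prod_{i\in I}\di{-r(x),r(x)}$ and $y_{I^c}\in x_{I^c}+\{-r(x),r(x)\}^{I^c}$; its $I$-coordinate part lies in the grid $\Pi_I:=\bigcup_{x\in S}\bigl(x_I+\prod_{i\in I}\di{-r(x),r(x)}\bigr)$, of cardinality $O(|S|^{k/n})$ by product-likeness and smallness of the radii, and its $I^c$-coordinate part, after relabelling the coordinates of $I^c$, lies in $A$. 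Summing over the $\binom{n}{k}$ choices of $I$ gives $|B|\le\sum_I|\Pi_I|\,|A|=O\!\left(|S|^{k/n}\cdot|S|^{(n-k)(2n-1)/2n^2}\right)=O(|S|^{\theta})$.

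The real work is step (1), the sharpness of Theorem~\ref{int:nllem}. For $\ell\le n$ one must construct $A\subseteq\Z^{\ell}$ and $S\subseteq\Z^n$ together with a single radius per point that serves all $\binom{n}{\ell}$ coordinate projections simultaneously, and with $|A|=O(|S|^{\ell(2n-1)/2n^2})$. I would do this with a self-similar, base-$q$, roughly $\log_q|S|$-scale construction in which, in each coordinate and at each scale, the local pattern of $A$ is a hybrid of two blocks: an arithmetic-progression block — which is what allows \emph{many different} centres to share a \emph{common} radius, indispensable once $n>1$ — and a Sidon-type block — which keeps the count of distinct centres large. When $\ell=n=1$ this degenerates to the familiar tight example, $A$ a Sidon set and $S=A+A$. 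The radius given to $x\in S$ would be $q^{t(x)}$ up to a bounded multiplier, where $t(x)$ is the scale at which the base-$q$ digits of $x$ occupy the Sidon block; adding $\pm q^{t(x)}$ then perturbs only the digits of levels $\le t(x)$, and carry-free, so all the vertices required of $x$ land back in $A$. The cardinalities $|S|$ and $|A|$ are then products of per-scale contributions, and the exponent $\ell(2n-1)/2n^2$ — in particular the $2n^2$ in the denominator — drops out of optimising how the $\approx\log_q|S|$ scales are apportioned between ``Sidon work'' and ``AP work'' across the $n$ coordinates.

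I expect two genuine difficulties. First, getting that apportionment to optimise exactly to $\ell(2n-1)/2n^2$ rather than to a weaker exponent: the appearance of $2n^2$, rather than, say, $2n$, indicates that this balance is delicate. Second, making the construction coordinate-symmetric, so that one radius $r(x)$ works for all $\binom{n}{\ell}$ — and then all $\binom{n}{k}$ — coordinate subsets at once; this is precisely the content of the quantifier order ``$\exists r\;\forall I$'' in Theorem~\ref{int:nllem}, and it is what rules out the naive idea of taking $A$ a direct product of independent one-dimensional Sidon gadgets, since a Sidon set forces the admissible radius in each coordinate to be essentially unique and hence unmatchable across coordinates. A last bookkeeping issue, needing care but no new idea, is to run the $k$-dimensional grid part at finer scales than the $A$-part, so that the radii are genuinely small relative to the free-direction spacing of $S$ and the estimate $|\Pi_I|=O(|S|^{k/n})$, hence the whole tensoring bound $|B|\le\sum_I|\Pi_I|\,|A|$, goes through.
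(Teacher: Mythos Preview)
Your reduction---interpolate in $p$, then tensor a sharp instance of Theorem~\ref{int:nllem} (with $\ell=n-k$) against full intervals in the $k$ free coordinates---is exactly the paper's route (Corollaries~\ref{nlcon} and~\ref{disccon}). That part is correct and matches.

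The divergence, and the gap, is in your step~(1). The paper's key input is the Digit Construction (Lemma~\ref{digit}): take $S=\di{1,i^{2n}-1}^n$, a \emph{full} grid, and take the one-dimensional set $D_{i,n}$ to be those integers whose $2n$-digit base-$i$ expansion (digits allowed in $\di{2(1-i),2(i-1)}$) has at least one zero digit, so $|D_{i,n}|=O(i^{2n-1})$; then $A=D_{i,n}^{\,n-k}$. For $x=(x_1,\dots,x_n)\in S$ the radius $r$ is the explicit signed combination $r=\sum_m x_{m,2m}i^{2m}-\sum_m x_{m,2m+1}i^{2m+1}$, one digit-pair devoted to each coordinate, arranged so that $x_j\pm r$ is forced to have a zero in position $2j$ or $2j{+}1$. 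There are no Sidon blocks and no scale-allocation optimization; the denominator $2n^2$ falls out immediately from $|S|=(i^{2n})^n$ and $|D_{i,n}|=O(i^{2n-1})$. And because $S$ is a full grid with $|r|=O(i^{2n})$, the free-direction factor is just an interval of length $O(|S|^{1/n})$, so your ``radii small relative to spacing'' bookkeeping never arises.

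Your proposed alternative for step~(1)---radii of the form $r(x)\approx q^{t(x)}$, essentially a single power of the base---does not resolve the very obstruction you name. Such an $r$ perturbs one digit position (modulo carries), and nothing forces all $n$ coordinates of an arbitrary grid point to accept the same single-digit shift; this is precisely the ``$\exists r\,\forall I$'' difficulty. The paper's $r$ has nonzero contribution at \emph{all} $2n$ digit positions, with the $j$-th coordinate of $x$ dictating what happens at positions $2j$ and $2j{+}1$; that is what makes one $r$ serve every projection simultaneously. As written, your step~(1) is a plausible heuristic but not a construction, and the concrete mechanism you sketch would not deliver the common radius.
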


\begin{thm}\label{int:nlcon}
For every $0<\ell\leq n$ and every $p\geq 0$, there are finite $B\subseteq \Z^\ell$, $S\subseteq \Z^n$ such that $|B|\le O(|S|^{\ell(2n-1)/(2n^2)})$, $|S|=p$, and
\[\forall x\in S\;\exists r\in\R^+\;\forall I\in \setc n \ell\;\forall \sigma\in \{-1,1\}^\ell:\;\; x_I+r\sigma\in B.\]

\end{thm}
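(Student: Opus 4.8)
The plan is to take $B$ to be an $\ell$-fold Cartesian power of a single random subset $P$ of an interval, and to let $S$ be the set of lattice points (in a suitable box) that happen to admit a valid radius; then the required incidence will be automatic, $|B|$ will be governed by $|P|$, and the only real work will be to show $S$ is large. Fix a large integer $N$, put $\rho=N^{-1/(2n)}$, and let $P\subseteq\di{1,N}$ be random, each element of $\di{1,N}$ included independently with probability $\rho$; set $B=P^\ell\subseteq\Z^\ell$. Fix constants $0<\alpha<\beta<1$ and $0<\gamma<\delta<1$ (e.g. $\alpha=\tfrac38,\beta=\tfrac12,\gamma=\tfrac3{16},\delta=\tfrac14$) arranged so that $x_i\in\di{\alpha N,\beta N}$ and $r\in\di{\gamma N,\delta N}$ force $x_i-r\in\di{1,N/2}$ and $x_i+r\in\di{N/2+1,N}$, and define
\[
S=\left\{x\in\di{\alpha N,\beta N}^n\ :\ \exists\,r\in\di{\gamma N,\delta N}\ \ \forall i\in\di{1,n}:\ x_i-r\in P\ \text{and}\ x_i+r\in P\right\}.
\]
If $x\in S$ with witness $r$, then for every $I\in\setc{n}{\ell}$ and every $\sigma\in\{-1,1\}^\ell$ each coordinate of $x_I+r\sigma$ is one of the numbers $x_i\pm r$, hence lies in $P$, so $x_I+r\sigma\in P^\ell=B$; thus $B$ and $S$ meet the condition of the theorem for every realization of $P$. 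Moreover $|B|=|P|^\ell$, and a Chernoff bound gives $|P|\le 2\rho N=O(N^{1-1/(2n)})$ with probability $1-o(1)$, so on that event $|B|\le O(N^{\ell(2n-1)/(2n)})$. It therefore suffices to prove $|S|=\Omega(N^n)$ with positive probability, since then $|B|\le O(|S|^{\ell(2n-1)/(2n^2)})$.

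The crux is $\mathbb{E}|S|=\Omega(N^n)$, equivalently $\Pr[x\in S]=\Omega(1)$ for every $x$ in the set $G$ of points of $\di{\alpha N,\beta N}^n$ with pairwise distinct coordinates, which has $|G|=\Omega(N^n)$. Fix such an $x$, and for $r\in\di{\gamma N,\delta N}$ let $V_r$ be the event that all of $x_1\pm r,\dots,x_n\pm r$ lie in $P$. The point of splitting $\di{1,N}$ into two halves is that the numbers $x_i-r$ all lie in $\di{1,N/2}$, the numbers $x_i+r$ all lie in $\di{N/2+1,N}$, and within each half they are distinct; so these $2n$ numbers are always distinct and $\Pr[V_r]=\rho^{2n}=N^{-1}$ for \emph{every} $r$ in our range --- there are no degenerate radii to worry about. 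Hence $Z:=\sum_r\mathbf 1_{V_r}$ has $\mathbb{E}[Z]=\Theta(1)$.

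I would finish by the second moment method. In $\mathbb{E}[Z^2]=\sum_{r,r'}\Pr[V_r\cap V_{r'}]$ the diagonal contributes $\mathbb{E}[Z]=\Theta(1)$. For $r\ne r'$, the $2n$ numbers attached to $r$ and the $2n$ attached to $r'$ overlap only when $r-r'$ or $r+r'$ lies in $\{x_i-x_j:i\ne j\}$, a set of size $O(n^2)$; so there are only $O(N)$ such ``colliding'' ordered pairs, each contributing at most $\rho^{2n}=N^{-1}$, for a total of $O(1)$, while every remaining off-diagonal pair contributes exactly $\rho^{4n}=N^{-2}$, for a total of at most $(\delta N)^2N^{-2}=O(1)$. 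Thus $\mathbb{E}[Z^2]=O(1)$, and the Paley--Zygmund inequality gives $\Pr[x\in S]\ge(\mathbb{E}[Z])^2/\mathbb{E}[Z^2]=\Omega(1)$. Summing over $G$ yields $\mathbb{E}|S|=\Omega(N^n)$; since $|S|\le N^n$ always, the reverse Markov inequality makes $|S|\ge\Omega(N^n)$ with probability $\Omega(1)$, so together with the Chernoff estimate there is, for $N$ large, a fixed $P$ realizing both bounds. Finally, given the target $p\ge0$, take $N\asymp p^{1/n}$ large enough that $p\le|S|=O(p)$ and pass to a subset of $S$ of size exactly $p$ (sub-collections still satisfy the condition, and $|B|$ is unchanged); tiny $p$ is trivial. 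I expect the main obstacle to be the $\mathbb{E}[Z^2]$ bound --- i.e., verifying that collisions among the values $x_i\pm r$ are genuinely rare --- which is precisely why the two-half splitting is built into the construction: it removes the degenerate radii that would otherwise complicate the count.
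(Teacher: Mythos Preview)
Your argument is correct, but it takes a genuinely different route from the paper's proof. The paper proceeds via an explicit deterministic construction (the Digit Construction, Lemma~\ref{digit}): one builds a set $D_{i,n}\subseteq\Z$ of size $O(i^{2n-1})$, roughly ``integers whose base-$i$ expansion with signed digits has at least one zero digit,'' with the property that for \emph{every} tuple $x_1,\dots,x_n\in\di{1,i^{2n}-1}$ there is an explicit radius $r$ (built from selected digits of the $x_j$) with $x_j\pm r\in D_{i,n}$. One then simply takes $B=(D_{i,n})^\ell$ and $S$ to be the \emph{entire} box $\di{1,i^{2n}-1}^n$; the size comparison is immediate.

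Your approach instead chooses $P\subseteq\di{1,N}$ at random with density $N^{-1/(2n)}$, sets $B=P^\ell$, and lets $S$ be the (random) set of lattice points that happen to have a valid radius; the second-moment/Paley--Zygmund step shows $S$ occupies a positive fraction of the box, and a union with the Chernoff bound on $|P|$ yields a good realization. The half-interval trick to force disjointness of $\{x_i-r\}$ and $\{x_i+r\}$ is exactly what makes the collision count in $\mathbb E[Z^2]$ clean, and your bound there is fine since $\Pr[V_r\cap V_{r'}]\le\Pr[V_r]=\rho^{2n}$ always suffices for the $O(N)$ colliding pairs.

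What each approach buys: the paper's construction is explicit and, more importantly, gives $S$ equal to a full box, which feeds directly into the continuous constructions in Sections~\ref{sec:packcon} and~\ref{sec:boxcon} (where one needs the one-dimensional pieces $D_{i,n}$ themselves, together with their covering properties, to build the sets $A$ and $T$). Your probabilistic construction is arguably more elementary---it replaces the digit trick by a standard second-moment computation---but it only yields $S$ as a positive-density subset of the box, and gives no handle on the internal structure of $P$; so it would not plug into the later sections without further work.
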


We have a general bound for the Hausdorff dimension and have shown its sharpness in several cases.

\begin{thm}\label{int:hausbound}
If $B,S\subseteq\R^n$ and $B$ contains the $k$-skeleton of a cube around every point in $S$, then $\dim_H(B)\geq \max\{\dim_H(S)-1,k\}.$ 
\end{thm}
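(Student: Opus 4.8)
The plan is to prove the two bounds $\dim_H(B)\ge k$ and $\dim_H(B)\ge\dim_H(S)-1$ separately, the assertion being their maximum. (Throughout we may assume $S\neq\emptyset$.) The first bound is immediate: fixing any $x\in S$, the hypothesis gives a cube $x+[-r,r]^n$ around $x$ whose $k$-skeleton lies in $B$, and that skeleton contains one of its $k$-dimensional faces, which is an isometric copy of $[0,r]^k$ and hence has Hausdorff dimension $k$; thus $\dim_H(B)\ge k$.

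For $\dim_H(B)\ge\dim_H(S)-1$ the idea is that the only obstruction to transporting dimension from $S$ into $B$ is the radius function $x\mapsto r(x)$, which may be arbitrarily wild, and that this can be neutralized by promoting it to an extra coordinate. For each $x\in S$ fix (using the hypothesis) a number $r(x)>0$ with the $k$-skeleton of $x+[-r(x),r(x)]^n$ contained in $B$; since the $k$-skeleton contains the $0$-skeleton, the vertex $v(x):=x+r(x)\mathbf 1$, where $\mathbf 1=(1,\dots,1)$, lies in $B$. Let $\widetilde S=\{(x,r(x)):x\in S\}\subseteq\R^n\times(0,\infty)\subseteq\R^{n+1}$ be the graph of the radius function. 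The coordinate projection $\R^{n+1}\to\R^n$ is $1$-Lipschitz and maps $\widetilde S$ onto $S$, so $\dim_H(\widetilde S)\ge\dim_H(S)$. Now let $\Psi\colon\R^{n+1}\to\R^n$ be the \emph{linear} map $\Psi(x,r)=x+r\mathbf 1$; then $\Psi(\widetilde S)=\{v(x):x\in S\}\subseteq B$, so it remains only to show $\dim_H(\Psi(\widetilde S))\ge\dim_H(\widetilde S)-1$.

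This reduces to a soft fact: a surjective linear map $\R^{n+1}\to\R^n$ with one-dimensional kernel decreases Hausdorff dimension by at most $1$. Here $\Psi$ is onto and $\ker\Psi=\R\cdot(\mathbf 1,-1)$ is a line, so after a bi-Lipschitz linear change of coordinates on $\R^{n+1}$ we may assume $\Psi$ is the projection forgetting the last coordinate; since both $\widetilde S$ and $\Psi(\widetilde S)$ are increasing countable unions of their intersections with balls about the origin, and Hausdorff dimension is a countable supremum, we may further assume $\widetilde S$ bounded. Then $\widetilde S\subseteq\Psi(\widetilde S)\times J$ for some bounded interval $J$, and the standard product estimate gives $\dim_H(\widetilde S)\le\dim_H(\Psi(\widetilde S))+\overline{\dim}_B(J)=\dim_H(\Psi(\widetilde S))+1$. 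Chaining everything, $\dim_H(B)\ge\dim_H(\Psi(\widetilde S))\ge\dim_H(\widetilde S)-1\ge\dim_H(S)-1$.

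I expect the main (indeed essentially only) conceptual point to be the passage to the graph of the radius function: the assignment $x\mapsto v(x)$ is typically not even continuous, so one cannot bound $\dim_H(\Psi(\widetilde S))$ directly in terms of $\dim_H(S)$, but replacing $S$ by $\widetilde S$ turns this transport into the restriction of a fixed linear surjection with a one-dimensional kernel, after which only elementary bookkeeping remains — that Lipschitz maps never raise, and taking a product with a bounded interval raises by exactly $1$, the Hausdorff dimension.
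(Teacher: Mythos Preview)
Your proof is correct and rests on the same observation as the paper's: for each $x\in S$ the vertex $x+r(x)(1,\dots,1)$ lies in $B$, and a linear map whose kernel is the line spanned by $(1,\dots,1)$ can lower Hausdorff dimension by at most $1$. The paper packages this more directly, without the detour through the graph $\widetilde S\subseteq\R^{n+1}$: letting $P$ be the orthogonal projection of $\R^n$ onto the hyperplane $(1,\dots,1)^\perp$, the vertex condition gives $P(S)\subseteq P(B)$ immediately (since $P(x)=P(x+r(1,\dots,1))$ for every $r$), and then
\[
\dim_H(S)\le\dim_H(P(S))+1\le\dim_H(P(B))+1\le\dim_H(B)+1.
\]
Your lift to $\R^{n+1}$ is a clean way to make the radius function explicit, but the paper's projection argument sidesteps the choice of $r(x)$ and the bookkeeping about bounded pieces entirely; in effect it applies the ``one-dimensional-kernel'' fact once to $P$ inside $\R^n$ rather than to $\Psi$ from $\R^{n+1}$.
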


\begin{thm}\label{int:hauscon}
For $0\leq k<n$, $s\in [n-k,n]$, there are $G_{\delta}$ sets $B,S\subseteq \R^n$ where $B$ contains the $k$-skeleton of an $n$-cube around each point in $S$, $\dim_H(B)=\max\{k,s-1\}$ and $\dim_H(S)=s$.

Further, if $k=0$, there are $B,S$ as above for $s\in[n-1,n]$.
\end{thm}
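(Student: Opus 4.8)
One half of the conclusion is free: once an $S$ with $\dim_H(S)=s$ is produced together with a $B$ containing a $k$-skeleton around each of its points, Theorem~\ref{int:hausbound} already gives $\dim_H(B)\ge\max\{s-1,k\}$. So the real task is to construct $G_\delta$ sets $B,S\subseteq\R^n$ --- with $B$ containing the $k$-skeleton of some cube around every point of $S$ --- such that $\dim_H(S)=s$ and $\dim_H(B)\le\max\{k,s-1\}$. I would build $B$ and $S$ together as countable intersections of open neighborhoods of finite approximations, so that $G_\delta$-ness is automatic, and organize the whole argument as a bookkeeping of Hausdorff dimension, coordinate by coordinate.

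The phenomenon being exploited is that a dense $G_\delta$ set in $\R^m$ can have full box-counting profile yet any prescribed small Hausdorff dimension, and, more to the point, that Hausdorff dimension behaves badly with respect to Cartesian products of such irregular sets: a product of $m$ dimension-zero sets can be engineered, by a digit-restriction scheme in which the free-digit blocks of the factors are staggered so that at every scale all but one factor is full, to have Hausdorff dimension exactly $m-1$. This ``dimension smuggling'' is what allows $B$ to be much smaller than $S$. The building blocks of the construction are therefore dense $G_\delta$ subsets $P\subseteq[0,1]$ obtained by scheduled digit restriction, with their dimensions, box profiles, sumsets $P+P$, and --- crucially --- the corresponding data for their shifted self-intersections $P\cap(P-\ell)$ all tuned to what the bookkeeping will demand.

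With these in hand the skeleton is placed as follows: for each coordinate $i$ assign a building block $P_i\subseteq[0,1]$, build $S\subseteq[0,1]^n$ and a small radius function $x\mapsto r_x$ together so that $x_i\pm r_x\in P_i$ for all $i$ and all $x\in S$, and let $B$ be the union over $x\in S$ of the $k$-skeleton of $x+[-r_x,r_x]^n$. In any slab $I\in\setc nk$ of such a skeleton the $k$ coordinates indexed by $I$ range over intervals while the remaining $n-k$ take values in the corresponding $P_i$, so the union of these slabs over $x$ lies in a product of $k$ interval-like factors with $n-k$ factors inside the $P_i$; a count of the Hausdorff dimensions of the $P_i$ (together with the box contribution of the interval factors) bounds $\dim_H(B)$ by $\max\{k,s-1\}$ as long as the $P_i$ are chosen with the right dimensions --- roughly, each of Hausdorff dimension $\tfrac{\max\{0,s-1-k\}}{n-k}$, adjusted so that products of their self-intersections behave correctly. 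When $k=0$ there are no ranging coordinates in any slab, which is exactly why one more unit of dimension is free in $S$ and the range of $s$ can be pushed down to $n-1$ rather than being only the single value $n$.

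The genuinely hard part is the lower bound $\dim_H(S)\ge s$, and making everything fit simultaneously. One wants $S$ to contain a set of the form $\bigcup_{r\in R}\prod_i\big(P_i\cap(P_i-2r)\big)$ with $R$ a set of radii of Hausdorff dimension $1$ --- which forces the sumsets $P_i+P_i$ to be full, so that a coordinate of $x$ can sweep a whole interval even though $x_i\pm r_x\in P_i$ --- and with each fiber $\prod_i(P_i\cap(P_i-2r))$ of Hausdorff dimension $s-1$ --- which forces the shifted self-intersections to inherit the staggered-schedule structure --- and then a fibered mass-distribution estimate: if a measurable family $\{X_r\}_{r\in R}$ carries Frostman measures of a common exponent $d$ depending uniformly on $r$, then $\bigcup_{r\in R}X_r\times\{r\}$ has Hausdorff dimension at least $d+\dim_H(R)$. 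The second input I would get by making the construction self-similar from one radius-fiber to the next, so that one rescaled measure serves throughout; the first is where most of the one-dimensional combinatorics lives --- choosing digit sets, block lengths and schedules so that dimensions, sumsets, box profiles and self-intersection profiles all come out right at once, in the regime $s\ge k+1$, in the regime $n-k\le s<k+1$, and in the extended $k=0$ range --- and it is the main obstacle.
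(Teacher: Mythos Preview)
Your proposal attacks the problem from scratch with a direct construction in $\R^n$ --- staggered digit-restricted $G_\delta$ blocks $P_i$, fibered Frostman measures, tuned sumsets and self-intersections --- and you correctly identify that this is where the real difficulty would lie. But the paper's proof does none of this. It proceeds by \emph{reduction to two low-dimensional base cases already known}, followed by a Cartesian product with a Euclidean factor:
\begin{itemize}
\item For $k>0$, the base case is $n=k+1$ (the boundary problem), handled in the immediately preceding theorem by taking $B=\bigcup_i \R^{i-1}\times A\times\R^{n-i}$ for a comeager $A\subseteq\R$ of Hausdorff dimension $0$; this single set works around every point of $\R^{k+1}$, so for any target $s'\in[1,k+1]$ one simply takes $S'\subseteq\R^{k+1}$ with $\dim_H S'=s'$.
\item For $k=0$, the base case is $n=2$, and the paper simply cites \cite[Theorem~3.5]{op}, which already gives $B',S'\subseteq\R^2$ with $\dim_H S'=s'\in[1,2]$ and $\dim_H B'=s'-1$.
\item In both cases one then sets $S=S'\times\R^{n-k-1}$ (resp.\ $\R^{n-2}$) and $B=B'\times\R^{n-k-1}$ (resp.\ $\R^{n-2}$), and reads off the dimensions from the product rules.
\end{itemize}
The only geometric check is that if $B'$ contains the boundary of a $(k{+}1)$-cube around $x'$, then $B'\times\R^{n-k-1}$ contains the full $k$-skeleton of the corresponding $n$-cube around $(x',y)$ --- true because every $k$-face of the $n$-cube projects onto a face (of dimension $\le k$) of the $(k{+}1)$-cube, and those all lie in the boundary.

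So the paper's argument is a two-line bootstrap, not a new construction; the hard one-dimensional combinatorics you describe is precisely what \cite{op} already did for $n=2$, and the point of this corollary is that nothing further is needed for the stated ranges of $s$. Your scheme, if carried through, would be aiming at the full conjecture for all $s\in[0,n]$ --- which is exactly what Cs\"ornyei and Keleti later proved --- rather than at this partial result. For the theorem as stated you are working much too hard: you have missed the reduction to the known planar and boundary cases.
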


We conjecture that constructions as in the theorem above can be found for all $s$. Marianna Cs\"ornyei and Tam\'as Keleti have recently proven this conjecture \cite{CK}. These results are summarized in the following table:

\begin{center}
\begin{tabular}{c|c}

Notion of Size & $k$-skeleton of an $n$-cube \\ \hline
 &  \\
$\dim_P$ & $k+\frac{(n-k)(2n-1)}{2n^2}\dim_P(S)$\\
$\overline{\dim}_B$ & $\max\{k,(1-\frac{n-k}{2n^2})\overline{\dim}_B(S)\}$ \\
$\underline{\dim}_B$ & $\max\{k,(1-\frac{n-k}{2n^2})\underline{\dim}_B(S)\}$ \\
$\dim_H$ & $\max\{k,\dim_H(S)-1\}$ (conjectured)  \\
$|\cdot |$ & $O\left(|S|^{1-\frac{n-k}{2n^2}}\right)$ 

\end{tabular}
\end{center}

We also have bounds for sets $B$ in $\R^n$ containing the vertices of the dual polytope of the cube, the orthoplex, around every point in a set $S$ of a given dimension:

\begin{thm}\label{int:orthobound}
Let $B,S\subseteq\R^n$ such that for all $x\in S$ there is some $r\in\R^+$ such that $x\pm re_i\in B$, where $e_i$ is the $i^{th}$ standard basis vector. Then, the following hold:

\begin{enumerate}
\item $\dim_H(B)\geq\dim_H(S)-1$,
\item if $B,S$ are finite, then $|B|\geq\Omega(|S|^{\frac{2n-1}{2n}})$,
\item $\underline{\dim}_B(B)\geq \frac{2n-1}{2n}\underline{\dim}_B(S)$ and $\overline{\dim}_B(B)\geq \frac{2n-1}{2n}\overline{\dim}_B(S)$, and
\item $\dim_P(B)\geq\frac{2n-1}{2n}\dim_P(S)$.
\end{enumerate}
\end{thm}

And, these bounds are sharp:
\begin{thm}\label{int:orthocon}
For any $n, p\in \N$ and $s\in[0,n]$, we can find compact sets $B_H$, $B_B$, $B_P$, $B_f$, $S_H$, $S_B$, $S_P,$ and $S_f\subseteq \R^n$ such that $B_X$ contains the vertices of an orthoplex around each point in $S_X$ for any $X\in\{H,B,P, f\}$, $\dim_X(S_X)=s$ for $X\in\{H,B,P\}$, $|S_f|=p$, and
\begin{enumerate}
\item $\dim_H(B_H)= \max\{1,\dim_H(S_H)-1\}$
\item $|B_f|= O(|S_f|^{\frac{2n-1}{2n}})$ (in particular, $S_f, B_f$ are finite)
\item $\overline{\dim}_B(B_B)= \frac{2n-1}{2n} \underline{\dim}_B(S_B)$
\item $\dim_P(B_P)= \frac{2n-1}{2n}\dim_P(S_P)$.
\end{enumerate}
\end{thm}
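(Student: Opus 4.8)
The plan is to build four separate pairs of sets, one optimized for each notion of size, since the constructions are essentially independent. For the finite case (item 2), the natural idea is to take $S_f$ to be a discrete cube $\di{0,m-1}^n$ (so $p = m^n$) and let $B_f$ consist, for each point $x \in S_f$, of the $2n$ points $x \pm r(x) e_i$ for a cleverly chosen radius function $r$. The trick — exactly as in the square/cube case — is to let the radius depend on $x$ so that the ``$+$'' endpoints in coordinate direction $i$ for points differing only in coordinate $i$ collide: one arranges that $x + r(x)e_i$ depends on fewer than $n$ ``degrees of freedom''. Concretely, choosing $r(x) = m - x_i$-type offsets on a suitably rescaled grid lets each of the $2n$ one-dimensional families of endpoints be covered by $O(m^{n}\cdot m^{-1}) = O(m^{n-1})$ points in that direction's slab after accounting for the savings, giving $|B_f| = O(m^{2n-1}) = O(p^{(2n-1)/(2n)})$. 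I would then just verify the counting and the orthoplex condition directly.

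For the packing and upper/lower box dimension cases (items 3 and 4), the standard approach is to convert the discrete construction into a self-similar-type Cantor construction. Take the finite construction at scale $m$, and iterate it across infinitely many scales $m_1 \ll m_2 \ll \cdots$; at stage $j$ one replaces each surviving cube by a scaled copy of the stage-$j$ discrete configuration. Choosing the $m_j$ growing fast enough (super-exponentially) makes the box-counting dimension of the resulting $S$ equal to $s$ while the box-counting dimension of $B$ at the relevant scales tracks $\frac{2n-1}{2n}s$; to get $\dim_P(S) = s$ one instead interlaces scales where the construction is ``dense'' with long stretches where it is ``frozen,'' which is the usual device for separating packing from box dimension (as in \cite{op}). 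For the box-counting statement I want $\overline{\dim}_B(B_B) = \frac{2n-1}{2n}\underline{\dim}_B(S_B)$, so I actually want $S_B$ to have equal upper and lower box dimension $s$ and then show $B_B$ does too; the homogeneous Cantor construction with a single scale ratio handles this cleanly.

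For the Hausdorff case (item 1), the target is $\dim_H(B_H) = \max\{1, \dim_H(S_H)-1\}$, matching the bound in item 1 of Theorem~\ref{int:orthobound} when $\dim_H(S) \ge 2$ and giving the floor of $1$ otherwise. When $s \le 2$ I can simply take $S_H$ to be (a subset of) a line segment's worth of orthoplexes of varying radii — more precisely, take $S_H \subseteq \R \times \{0\}^{n-1}$ of Hausdorff dimension $s$, with $B_H$ a single line plus the radius set, which has dimension $1$. When $s > 2$, I would take a product-type set: $S_H = E \times F$ where $E \subseteq \R$ has dimension $1$, $F \subseteq \R^{n-1}$ has dimension $s-1$, and arrange radii depending only on the $E$-coordinate so that the endpoint sets project into a fixed $(s-1)$-dimensional set in the complementary coordinates times a $1$-dimensional radius spread; a Fubini/slicing argument then pins $\dim_H(B_H)$ from above, while the lower bound is immediate since $B_H \supseteq$ a translate of $F$ in each axis direction. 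The main obstacle I anticipate is precisely this Hausdorff upper bound: unlike box and packing dimension, $\dim_H$ is not controlled by a single-scale covering count, so I must exhibit an efficient cover of $B_H$ at \emph{every} scale simultaneously, which forces the construction to be genuinely self-similar with carefully matched ratios rather than an ad hoc interleaving — this is why the analogous $k$-skeleton Hausdorff question (Theorem~\ref{int:hauscon}) is only settled in special cases, and I expect the orthoplex version to be easier only because the ``skeleton'' here is just $2n$ points, so the slicing is one-dimensional in each direction.
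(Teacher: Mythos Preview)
Your approach diverges from the paper's in a key structural way, and in two places it has genuine gaps.

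\textbf{Items (2), (3), (4).} The paper does not build a separate orthoplex construction at all: it observes that among the $2^n$ vertices of a cube centered at $x$ one can pick $n$ linearly independent directions $b_1,\ldots,b_n\in\{-1,1\}^n$ with $x\pm r b_i$ all vertices, and then applies the linear map $g$ sending $b_i\mapsto e_i$ to the already-established cube constructions (Corollary~\ref{disccon}, Theorem~\ref{vtxcon}, Theorem~\ref{boxcon}). This gives (2)--(4) in one line. Your direct route is not wrong in spirit, but the discrete sketch is: with $S_f=\di{0,m-1}^n$ and $p=m^n$ you compute $|B_f|=O(m^{2n-1})$ and assert this equals $O(p^{(2n-1)/(2n)})$, whereas in fact $m^{2n-1}=p^{(2n-1)/n}$, which is far too large. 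The ``$r(x)=m-x_i$'' idea cannot work because the orthoplex requires a \emph{single} $r$ for all $n$ directions simultaneously, so a coordinate-dependent offset is illegal; and any single offset of this simple form saves at most a factor of $m$ in one direction, not enough. The actual mechanism that achieves the exponent $(2n-1)/(2n)$ is the base-$i$ digit construction of Lemma~\ref{digit}, which you do not invoke.

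\textbf{Item (1), Hausdorff.} Your product ansatz $S_H=E\times F$ with $\dim_H E=1$, $\dim_H F=s-1$, and $r=r(e)$ fails to give $\dim_H(B_H)\le s-1$. For a direction $i\ge 2$ the vertex $(e,f)+r(e)e_i$ lies in the image of $E\times F$ under the map $(e,f)\mapsto (e,\,f+r(e)e_{i-1}')$; if $r$ is even H\"older this map is bi-Lipschitz onto its image, so that piece of $B_H$ has Hausdorff dimension $\dim_H(E\times F)=s$, not $s-1$. The difficulty is exactly that the first coordinate $e$ is not killed. The paper's construction avoids this by using the $SPL$ splicing operation from \cite{op}: it builds $S_H=\prod_q C^{(q)}$ and places the orthoplex vertices in sets of the form $A\times\prod_{q\in I}C^{(q)}$ where $A$ is a spliced set whose ``active'' scales are disjoint from those of the $C^{(q)}$, so that by the splicing dimension lemma the product genuinely has dimension $(n-1)\dim_H B + 0 = s-1$ rather than $s$. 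This interleaving of scales is the missing idea; a static product cannot achieve it.
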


\subsection{Structure of the Paper}
The rest of this paper is organized as follows. In section \ref{sec:disc}, we prove the discrete results. In section \ref{sec:dim} we collect results in dimension theory which we will require. In section \ref{sec:estimates}, we prove bounds for box-counting and packing dimension. In section \ref{sec:packcon} we give constructions showing the sharpness of the bounds for packing dimension and of the bound for box-counting dimension in the case $k=0$ (the vertex case). In section \ref{sec:boxcon} we give a construction showing the sharpness of the box-counting bound. In section \ref{sec:haus} we cover what is known about Hausdorff dimension and pose a conjecture about this case. In section \ref{sec:ortho} we settle the vertex problem for the orthoplex.

\section{Discrete Results} \label{sec:disc}

The sets we construct and bound in this section will be finite. Most of the discrete results follow as corollaries from two lemmas: a construction based on $i$-ary expansions and a bound relating sets in dimension $n$ and $\ell$ for any $\ell<n$. We give the construction first. The lemma below generalizes \cite[Lemma 4.3]{op}.

\begin{lem}[Digit Construction] \label{digit}
For every $n, i$, there is a $D_{i,n}\subseteq \di{-(2i)^{2n},(2i)^{2n}}$ of size $O(i^{2n-1})$ such that, for every $x_1,...,x_n\in \di{1,i^{2n}-1}$, there is some $r$ such that, for every $0< j\leq n$, $x_j\pm r\in D_{i,n}$ and $|r|\in \di{1, i^{2n}}$.
\end{lem}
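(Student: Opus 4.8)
The plan is to construct $D_{i,n}$ explicitly by a digit criterion and then, for each tuple $(x_1,\dots,x_n)$, to build one radius $r$ that lands every $x_j\pm r$ in $D_{i,n}$. Work in base $i^2$: an element of $\di{0,i^{2n}-1}$ has exactly $n$ base-$i^2$ digits, so I will reserve the base-$i^2$ digit in position $j-1$ (positions numbered from $0$, least significant first) for the $j$-th coordinate, and since each base-$i^2$ digit is a pair of ordinary $i$-ary digits, this is why the exponents in the lemma are $2n$ rather than $n$. Call a base-$i^2$ digit $v\in\di{0,i^2-1}$ \emph{cheap} if $v<2i$ or $v\bmod i\in\{0,1,i-1\}$, so that there are at most $5i$ cheap digits. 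Define $D_{i,n}$ to be the set of integers $d$ with $|d|\le(2i)^{2n}$ such that, for at least one $j\in\di{1,n}$, the base-$i^2$ digit of $d\bmod i^{2n}$ in position $j-1$ is cheap. A union bound over $j$ shows the residues mod $i^{2n}$ satisfying this condition have density at most $5n/i$ in $\di{0,i^{2n}-1}$, and each such residue has only $O(1)$ representatives in $\di{-(2i)^{2n},(2i)^{2n}}$, so $|D_{i,n}|\le O(i^{2n-1})$.

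For the radius, I will read off the relevant digit of each $x_j$ and split it. Given $x_1,\dots,x_n\in\di{1,i^{2n}-1}$, let $e_j\in\di{0,i^2-1}$ be the base-$i^2$ digit of $x_j$ in position $j-1$ and write $e_j=c_j+d_ji$ with $c_j,d_j\in\di{0,i-1}$. Put $\rho_j:=d_ji-c_j$ if $d_j\ge1$ and $\rho_j:=c_j$ if $d_j=0$, so $\rho_j\in\di{0,i^2-1}$, while $e_j+\rho_j$ is $2d_ji$ or $2c_j$ and $e_j-\rho_j$ is $2c_j$ or $0$. Then set
\[ r:=1+\sum_{j=1}^n\rho_j\,i^{2(j-1)}. \]
A short computation gives $0<r<i^{2n}$, so $x_j+r$ and $x_j-r$ both lie in $(-i^{2n},2i^{2n})\subseteq\di{-(2i)^{2n},(2i)^{2n}}$. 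The crucial point is that every base-$i^2$ digit of $r$ is less than $i^2$, so forming $x_j+r$ (respectively $x_j-r$) sends a carry (respectively borrow) of at most $1$ into each base-$i^2$ position. Consequently the base-$i^2$ digit of $(x_j\pm r)\bmod i^{2n}$ in position $j-1$ equals $e_j\pm\rho_j$ shifted by at most $1$ and then reduced mod $i^2$; since $e_j\pm\rho_j\in\{2d_ji,2c_j,0\}$, in each case the outcome is a cheap digit: it is $\equiv 0$ or $1\pmod i$ when it comes from $2d_ji$, and it lies in $\{i^2-1\}\cup\di{0,2i-1}$ when it comes from $2c_j$ or $0$ (the wraparound of $-1$ to $i^2-1$ being exactly why $i-1$ was declared cheap). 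Hence $x_j\pm r\in D_{i,n}$ for every $j$, and the ``$+1$'' in $r$ guarantees $r>0$ even when all the $\rho_j$ vanish.

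The main obstacle is keeping this digit arithmetic honest rather than any single clever idea: one must verify that the carry and borrow chains really stay bounded by $1$, that $x_j+r$ and $x_j-r$ stay inside $(-i^{2n},2i^{2n})$ so the outermost carry or borrow never disturbs position $j-1$ when we pass to residues mod $i^{2n}$, and that the cheap set is simultaneously thin enough for the bound $O(i^{2n-1})$ and wide enough to contain $e_j\pm\rho_j\pm1\pmod{i^2}$ in each of the cases $d_j\ge1$, $d_j=0$, and $c_j=0$ — including the boundary sub-case $j=1$, where $r$ carries an extra $+1$ in position $0$. Everything past this is routine.
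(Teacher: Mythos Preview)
Your construction is correct and follows the same high-level idea as the paper's: reserve two base-$i$ positions (equivalently one base-$i^2$ position) for each coordinate $j$, and choose $r$ so that in those positions $x_j\pm r$ lands in a thin set. The implementations differ in one substantive way. The paper represents elements of $D_{i,n}$ with \emph{signed} digits in $\di{2(1-i),2(i-1)}$ and takes $D_{i,n}$ to be exactly those expansions with at least one zero digit; then $r=\sum_m x_{m,2m}i^{2m}-\sum_m x_{m,2m+1}i^{2m+1}$ makes $x_j+r$ vanish in position $2j{+}1$ and $x_j-r$ vanish in position $2j$, with no carries whatsoever because the signed-digit representation is closed under coordinatewise addition. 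You instead use standard (unsigned) base-$i^2$ digits, which forces you to track carries and borrows, to enlarge the target set from ``digit $0$'' to your five-residue ``cheap'' set to absorb the $\pm1$ drift, and to append the global $+1$ to guarantee $r>0$. Both bookkeeping schemes work; the paper's buys a cleaner one-line verification at the cost of a nonstandard digit system, while yours stays with ordinary digits at the cost of the case analysis you outlined (which does check out, including the $j=1$ and $c_j=0$ boundary cases).
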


\begin{proof}

The discrete interval $\di{1,i^{2n}-1}$ is the set of integers which can be written in base $i$ with $2n$ digits and at least 1 nonzero digit. Informally, $D_{i,n}$ is the set of numbers with at least one $0$ in their base $i$ expansion if we allow for negative digits.

\[D_{i,n}=\left\{\sum_{j=0}^{2n-1} a_{j}i^j: a_j\in\di{2(1-i),2(i-1)}, \prod_j a_j=0\right\}.\]

 Note that $|D_{i,n}|\leq (4i-4)^{2n}-(4i-5)^{2n}=O(i^{2n-1})$. Given $n$ numbers $x_1,...,x_n\in \di{1,i^{2n}-1}$, denote the terms of their $i$-expansions by

\[x_j=x_{j,2n-1}...x_{j,0},\]

that is
\[
x_j=\sum_{m=0}^{2n-1} x_{j,m}i^m\quad\mbox{with } 0\leq x_{j,m}<i,
\]

and let $r=x_{n-1,2n-2}0...0x_{1,2}0x_{0,0}-x_{n-1,2n-1}...x_{1,3}0x_{0,1}0$, that is

\[
r=\left(\sum_{m=0}^{n-1} x_{m,2m}i^{2m}\right)-\left(\sum_{m=0}^{n-1} x_{m,2m+1}i^{2m+1}\right). 
\]


By permuting the $x_i$, we may assume that at least one of $x_{i,2i}$ or $x_{i,2i+1}$ is nonzero, and so $r$ is nonzero. For any $j$, 
\[x_j+r=\left(\sum_{m=0}^{n-1} (x_{j,2m}+x_{m,2m})i^{2m}\right)+\left(\sum_{m=0}^{n-1} (x_{j,2m+1}-x_{m,2m+1})i^{2m+1}\right).\] We have that $|x_{j,m}\pm x_{\lfloor m/2\rfloor,m}|\le |x_{j,m}|+|x_{\lfloor m/2\rfloor,m}|\le 2(i-1)$, and $x_{j,2j+1}-x_{j,2j+1}=0$, so $x_j+r\in D_{i,n}$. And similarly for $x_j-r$. 
\end{proof}

\begin{cor}[Theorem \ref{int:nlcon} in the Introduction] \label{nlcon}
For every $n\geq \ell>0$ and $p\geq 0$, there are $B\subseteq \Z^\ell$, $S\subseteq \Z^n$ such that $|B|\le O(|S|^{\ell(2n-1)/(2n^2)})$, $|S|=p$, and
\[\forall x\in S\;\exists r\in\R^+\;\forall I\in \setc n \ell\;\forall \sigma\in \{-1,1\}^\ell:\;\; x_I+r\sigma\in B.\]
\end{cor}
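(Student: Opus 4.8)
The plan is to derive the corollary directly from the Digit Construction (Lemma \ref{digit}) by passing to an $\ell$-fold Cartesian power, so that a single radius $r$ handles every $\ell$-subset of coordinates at once. For $p=0$ one takes $B=S=\emptyset$, so I will assume $p\ge 1$. The first step is to pick the scale: let $i$ be the least positive integer with $(i^{2n}-1)^n\ge p$. Then set
\[
S_0:=\di{1,i^{2n}-1}^n\subseteq\Z^n,
\qquad
B:=D_{i,n}^{\ell}\subseteq\Z^{\ell},
\]
where $D_{i,n}\subseteq\di{-(2i)^{2n},(2i)^{2n}}$ is the set produced by Lemma \ref{digit}. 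Since $|S_0|=(i^{2n}-1)^n\ge p$, I can choose $S\subseteq S_0$ with $|S|=p$ exactly; this is the set $S$ of the corollary, and $B$ is its $B$.

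Next I would check the defining property. Fix $x=(x_1,\dots,x_n)\in S\subseteq S_0$, so every $x_j$ lies in $\di{1,i^{2n}-1}$, and Lemma \ref{digit} supplies a positive $r=r(x)$ with $x_j\pm r\in D_{i,n}$ for all $j\in\di{1,n}$. For any $I=\{i_1<\dots<i_\ell\}\in\setc n \ell$ and any $\sigma\in\{-1,1\}^\ell$, the $t$-th coordinate of $x_I+r\sigma$ equals $x_{i_t}+\sigma_t r\in\{x_{i_t}-r,\,x_{i_t}+r\}\subseteq D_{i,n}$, whence $x_I+r\sigma\in D_{i,n}^{\ell}=B$. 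The crucial feature here is that the same $r$ works for every $I$ simultaneously, which is precisely why Lemma \ref{digit} must be invoked in the ambient dimension $n$ rather than in dimension $\ell$.

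For the size estimate, Lemma \ref{digit} gives $|D_{i,n}|=O(i^{2n-1})$, so $|B|=|D_{i,n}|^\ell=O(i^{\ell(2n-1)})$. It remains to relate $i$ to $p=|S|$: minimality of $i$ should force $p\ge c_n\,i^{2n^2}$ (equivalently $i\le C_n\,p^{1/(2n^2)}$) for constants depending only on $n$, after which $|B|=O\big(p^{\ell(2n-1)/(2n^2)}\big)=O\big(|S|^{\ell(2n-1)/(2n^2)}\big)$, with the implied constant depending only on $n$ and $\ell$, as required.

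The one step that is not immediate — and the place to be careful — is that last inequality $i\le C_n\,p^{1/(2n^2)}$. When $i\ge 3$, minimality gives $p>((i-1)^{2n}-1)^n$, and using $(i-1)^{2n}-1\ge\tfrac12(i-1)^{2n}$ together with $i-1\ge\tfrac23 i$ produces the bound $p\ge c_n\,i^{2n^2}$. The residual case $i=2$ must be treated separately: there one only knows $p\ge 1$, but then $|B|=O(1)$ while $|S|^{\ell(2n-1)/(2n^2)}\ge 1$, so the conclusion holds trivially. Finally, shrinking $S_0$ to $S$ does no harm, since the property displayed in the statement is manifestly inherited by subsets, and the hypothesis $\ell\le n$ enters only to ensure $\setc n \ell\neq\emptyset$ and that $B$ lives in the stated dimension.
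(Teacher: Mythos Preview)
Your argument is correct and follows essentially the same approach as the paper: take $B=D_{i,n}^{\ell}$ and $S$ a subset of $\di{1,i^{2n}-1}^n$ of the right size, then interpolate over $i$. You are in fact more careful than the paper about the interpolation step (choosing $i$ via $(i^{2n}-1)^n\ge p$ rather than $i^{2n^2}\ge p$, and handling the boundary case $i=2$ explicitly), but the core construction and reasoning are identical.
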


\begin{proof}
If $p=i^{2n^2}$ for some $i$, take $B=(D_{i,n})^\ell$ and $S=\di{1,i^{2n}-1}^n$. For intermediate values of $p$, interpolate by taking $i$ to be the smallest such that $p\leq i^{2n^2}$ and let $S$ be a subset of $\di{1,i^{2n}-1}^n$ of size $p$, and $B=(D_{i,n})^\ell$. Since $(i+1)^{2n^2}-i^{2n^2}\leq 2^{n^2}i^{2n}$, $i^{2n^2}\leq O(p)$. The desired inequality follows directly.
\end{proof}


\begin{cor}[Theorem \ref{int:disccon} in the Introduction]\label{disccon}
For every $n>k\geq 0$ and $p\geq 0$, there are $B,S\subseteq \Z^n$ such that $B$ contains the discrete $k$-skeleton of a cube around every point in $S$, $|S|=p$, and $|B|\leq O\left(|S|^{1-\frac{n-k}{2n^2}}\right)$.
\end{cor}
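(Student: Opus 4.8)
The plan is to reduce the $k$-skeleton problem to the vertex problem solved by Corollary \ref{nlcon}. Observe that the discrete $k$-skeleton of the cube $x+\di{0,r}^n$ is $x+\bigcup_{I\in\setc{n}{k}}\prod_{i=1}^n A_{i,I}$ where $A_{i,I}=\di{0,r}$ if $i\in I$ and $\{0,r\}$ otherwise. So a point of the skeleton is obtained by fixing a choice of $I\in\setc{n}{k}$, letting the $k$ coordinates indexed by $I$ range freely over the full edge $\di{0,r}$, and letting the remaining $n-k$ coordinates each take one of the two endpoint values $0$ or $r$ — that is, they form a vertex of an $(n-k)$-cube (after translating by $x$, an $(n-k)$-cube of side $r$ based at $x$, restricted to the coordinates outside $I$). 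This is exactly the situation of Corollary \ref{nlcon} with $\ell=n-k$, applied to the projections $x_{J}$ of points $x\in S$ onto the $(n-k)$-dimensional coordinate planes $J=\di{1,n}\setminus I$.

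Concretely, I would set $p=|S|$ and apply Corollary \ref{nlcon} with $\ell=n-k$ to obtain $B_0\subseteq\Z^{n-k}$ and $S\subseteq\Z^n$ with $|S|=p$, $|B_0|\le O(p^{(n-k)(2n-1)/(2n^2)})$, and the property that for every $x\in S$ there is $r\in\R^+$ with $x_J+r\sigma\in B_0$ for every $J\in\setc{n}{n-k}$ and every $\sigma\in\{-1,1\}^{n-k}$. (Here one should take $S$ of the product form $\di{1,i^{2n}-1}^n$ as in the proof of Corollary \ref{nlcon}, so that $r$ may be taken uniform and the needed translations stay inside a controlled box; a harmless global translation makes all coordinates positive so the vertex $x_J+r\sigma$ realizes one corner of a genuine cube $x+\di{0,r}^n$.) Now build $B$ by taking, for each $I\in\setc{n}{k}$, the set of points of $\Z^n$ whose coordinates in $J=\di{1,n}\setminus I$ lie in $B_0$ and whose coordinates in $I$ lie in the common edge interval $\di{0,\max r}$ (or, since $S$ and the $r$'s come from the digit construction, in the bounded interval $\di{-(2i)^{2n},(2i)^{2n}}$ that already contains all relevant edges); take $B$ to be the union over all such $I$. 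Then by construction $B$ contains the discrete $k$-skeleton of the cube of side $r$ around every $x\in S$.

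It remains to bound $|B|$. For a fixed $I$, the corresponding piece has size at most $|B_0|\cdot (\text{length of edge interval})^k$. Since everything lives inside the box of Lemma \ref{digit}, the edge interval has length $O(i^{2n})=O(p^{1/n})$, so each piece has size $O\bigl(p^{(n-k)(2n-1)/(2n^2)}\bigr)\cdot O\bigl(p^{k/n}\bigr)=O\bigl(p^{\frac{(n-k)(2n-1)}{2n^2}+\frac{k}{n}}\bigr)$. A direct computation gives $\frac{(n-k)(2n-1)}{2n^2}+\frac{k}{n}=1-\frac{n-k}{2n^2}$, matching the exponent claimed. Summing over the $\binom{n}{k}=O(1)$ choices of $I$ only multiplies by a constant, so $|B|\le O\bigl(p^{1-\frac{n-k}{2n^2}}\bigr)=O\bigl(|S|^{1-\frac{n-k}{2n^2}}\bigr)$, as desired. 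For intermediate $p$ not of the form $i^{2n^2}$, the same interpolation as in Corollary \ref{nlcon} (pick the least $i$ with $p\le i^{2n^2}$, pass to a subset of size $p$) works since $i^{2n^2}\le O(p)$.

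The main obstacle I expect is bookkeeping rather than a genuine difficulty: one must make sure a \emph{single} radius $r=r(x)$ simultaneously works for all $I\in\setc{n}{k}$ and all endpoint choices — this is why it is cleanest to invoke Corollary \ref{nlcon} in its product form $S=\di{1,i^{2n}-1}^n$, $B_0=(D_{i,n})^{n-k}$, where the digit construction of Lemma \ref{digit} already delivers one $r$ good for every coordinate simultaneously — and one must confirm the edge coordinates $\di{0,r}$ lie in a set of size $O(p^{1/n})$ so the extra factor $p^{k/n}$ is correct and does not secretly depend on $x$. Once the product structure is used, the rest is the elementary exponent identity above.
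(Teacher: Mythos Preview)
Your proposal is correct and follows essentially the same approach as the paper: take $S=\di{1,i^{2n}-1}^n$, use $D_{i,n}$ for the $n-k$ ``vertex'' coordinates and an interval of length $O(i^{2n})=O(|S|^{1/n})$ for the $k$ ``edge'' coordinates, union over the $\binom{n}{k}$ coordinate choices, and interpolate for general $p$. The paper simply writes down the product $B=\bigcup_{J\in\setc{n}{k}}\prod_j A_j$ with $A_j=D_{i,n}$ for $j\notin J$ and $A_j=\di{1,i^{2n}-1}$ for $j\in J$ directly rather than routing through Corollary~\ref{nlcon}, but the construction and the exponent computation $\frac{(n-k)(2n-1)}{2n^2}+\frac{k}{n}=1-\frac{n-k}{2n^2}$ are the same.
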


\begin{proof}
If $p=i^{2n^2}$, take $B=\bigcup_{J\in \setc n k} \prod_{j=1}^{n} A_j$, where
\[A_j=\left\{\begin{array}{ll} D_{i,n} & j\not\in J \\ \; \di{1,i^{2n}-1} & j\in J \end{array}\right.,\]
$S=\di{1,i^{2n}-1}^n$. Otherwise, interpolate as above.
\end{proof}

To get bounds showing that the above constructions are optimal, we will need a theorem comparing sets in $\R^\ell$ to sets in $\R^n$, the $(n,\ell)$-dimensional lemma, Lemma \ref{nllem}. To get this, we start by proving the case where $n=\ell$, the $n$-dimensional lemma, Lemma \ref{nlem}. This is a generalization of \cite[Theorem 4.1]{op}. The idea of the proof there is to decompose the square into two intersecting lines and use a bound in $\R$ to get a bound in $\R^2$. Our argument similarly decomposes the cube into a line and an $(n-1)$-plane and proceeds induction.

We make a slightly more opaque statement of the lemma because this will be useful for resolving questions about the orthoplex and because the proof is more natural.

\begin{lem}[$n$-dimensional Lemma] \label{nlem}
For any $n>0$, $B,S \subseteq \R^n$ finite, if there are $v_1,...,v_n$ linearly independent vectors such that for any $x\in S$ and $0\leq i\leq n$, there is some $r\in\R^+$ such that $x\pm r v_i\in B$ (and in particular, if $B$ contains the vertices of an $n$-cube around each point in $S$), then $|B|\geq \Omega\left(|S|^{(2n-1)/(2n)}\right)$.

\end{lem}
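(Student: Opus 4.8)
The plan is to prove this by induction on $n$. The base case $n=1$ is a one-dimensional statement: if $B,S\subseteq\R$ and for each $x\in S$ there is $r>0$ with $x\pm r\in B$, then $|B|\geq\Omega(|S|^{1/2})$. This is the classical observation that $S$ injects into $\binom{B}{2}$ (or into the set of midpoint/half-difference pairs), since the map $x\mapsto\{x-r_x,x+r_x\}$ is injective (the pair determines $x$ as its midpoint), so $|S|\leq\binom{|B|}{2}\leq|B|^2$.

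\medskip

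For the inductive step, I would use the decomposition of the cube alluded to in the text: write $\R^n=\R v_n\oplus V$ where $V=\mathrm{span}(v_1,\dots,v_{n-1})$, and think of $B$ and $S$ as living in fibers over the hyperplane $V$. Concretely, for each "line direction" $v_n$, group the points of $S$ and $B$ into the lines parallel to $v_n$ that they lie on. The hypothesis gives, for each $x\in S$, a radius $r_x$ with $x\pm r_x v_n\in B$ — this is a one-dimensional constraint within the $v_n$-line through $x$ — and simultaneously $x\pm r_x v_i\in B$ for $i<n$, which after projecting along $v_n$ becomes the $(n-1)$-dimensional hypothesis for the projected sets. The strategy is then a Fubini/Cauchy–Schwarz argument: partition $S$ into its intersections with $v_n$-lines $L$, apply the $n=1$ bound on each line to relate $|S\cap L|$ to $|B\cap L|$, apply the inductive $(n-1)$-dimensional bound to the projection of $S$ to control how the line-counts aggregate, and combine the two with Hölder/Cauchy–Schwarz to extract the exponent $(2n-1)/(2n)$. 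The exponent should drop out of the recursion: writing $a_n$ for the exponent, one expects a relation of the form forcing $a_n = \frac{(2n-1)/(2n)}{\text{something}}$ consistent with $a_1=1/2$, and indeed $(2n-1)/(2n)$ interpolates correctly (e.g. $3/4$ when $n=2$, matching \cite{op}).

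\medskip

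More precisely, here is the shape of the combination I would carry out. Let $\pi$ be projection along $v_n$ onto $V\cong\R^{n-1}$, let $S'=\pi(S)$, $B'=\pi(B)$. For $y\in S'$ let $n_y=|S\cap\pi\inv(y)|$ and $m_y=|B\cap\pi\inv(y)|$; the one-dimensional argument on the line $\pi\inv(y)$ gives $n_y\leq O(m_y^2)$, i.e. $m_y\geq\Omega(n_y^{1/2})$. Also $|S|=\sum_{y\in S'} n_y$ and $|B|\geq\sum_{y\in B'} m_y\geq \Omega\big(\sum_{y\in S'} n_y^{1/2}\big)$. Meanwhile the inductive hypothesis applies to $S'\subseteq\R^{n-1}$ with the directions $v_1,\dots,v_{n-1}$ (one must check the projected hypothesis still holds — the radius $r_x$ is shared, and $\pi(x\pm r_x v_i)=\pi(x)\pm r_x\pi(v_i)$ with $\pi(v_1),\dots,\pi(v_{n-1})$ still linearly independent), giving $|B'|\geq\Omega(|S'|^{(2n-3)/(2n-2)})$. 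Combining $|B|\geq\Omega(\sum n_y^{1/2})$, $|B|\geq|B'|\geq\Omega(|S'|^{(2n-3)/(2n-2)})$, $|S|=\sum n_y$, and $|S'|=\#\{y:n_y>0\}$ via a Cauchy–Schwarz / dyadic-pigeonhole balancing of the cases "many lines with few points" versus "few lines with many points" yields the claimed bound on $|B|$.

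\medskip

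The main obstacle I anticipate is precisely this last optimization: getting the exponents to balance cleanly, and in particular verifying that the worst case of the Cauchy–Schwarz split reproduces exactly $(2n-1)/(2n)$ rather than something weaker. A secondary subtlety is bookkeeping with the implied constants through the induction (they depend only on $n$, so this is fine, but one should be careful that the recursion on constants terminates). One also needs to handle degenerate fibers and the passage between $|B|$ and $\sum_{y\in B'} m_y$ carefully — these are routine but worth stating. The deliberately "opaque" formulation in terms of arbitrary linearly independent $v_1,\dots,v_n$ rather than an honest cube is exactly what makes the induction go through, since the projected problem is again of the same general type.
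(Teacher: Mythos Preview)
Your inductive framework and base case are exactly right, and so is the idea of using the last direction $v_n$ to fiber into lines. The gap is in the second ingredient: you apply the inductive hypothesis to the \emph{projection} $S'=\pi(S)$, $B'=\pi(B)$, obtaining $|B|\ge|B'|\ge\Omega(|S'|^{(2n-3)/(2n-2)})$. This is too weak to close the induction. Concretely, if $S$ lies on a single $v_n$-line then $|S'|=1$, so your bound (B) is trivial, while your bound (A) gives only $|B|\ge\Omega(|S|^{1/2})$; no Cauchy--Schwarz or pigeonhole combination of $\sum_y n_y^{1/2}$ and $|S'|^{(2n-3)/(2n-2)}$ can recover $|S|^{(2n-1)/(2n)}$ from these two quantities alone. (You can check that for general distributions $(n_y)$ the best you can squeeze out is $|B|\ge\Omega(|S|^{1/3})$, far from $|S|^{3/4}$ already when $n=2$.) The point is that projecting collapses the information about how $S$ is distributed across the $v_n$-fibers.

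The paper's proof uses the same inductive skeleton but replaces the projection step by a \emph{slicing} step: it partitions $\R^n$ into hyperplanes $P_j$ parallel to $\mathrm{span}(v_1,\dots,v_{n-1})$ and applies the inductive hypothesis \emph{inside each slice} (this is legitimate since $x\pm r v_i\in P_j$ whenever $x\in P_j$ and $i<n$). This yields $|B|\ge c\sum_j p_j^{(2n-3)/(2n-2)}$ with $p_j=|S\cap P_j|$, which is strictly stronger than a single power of $|S'|$. The case split is then driven by a threshold $T=|S|^{(n-1)/n}$: if half of $|S|$ sits in slices with $p_j\le T$, the hyperplane sum already gives $|S|^{(2n-1)/(2n)}$; otherwise at most $|S|^{1/n}$ hyperplanes carry most of $S$, and since each $v_n$-line meets each hyperplane in a single point, every line carries at most $|S|^{1/n}$ of those points, which makes your line bound $\sum q_j^{1/2}$ strong enough. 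The transversality of the two slicings is what makes the balancing work --- and is exactly what the projection throws away. The fix to your argument is a one-word change: slice instead of project; after that, the optimization you anticipated being the obstacle goes through cleanly with the threshold above.
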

\begin{proof}
We will prove that, in fact, $|B|\geq \frac{1}{2^{n-1}}|S|^{(2n-1)/(2n)}$. We proceed by induction.

If $n=1$, then any point in $S$ is the midpoint of two points in $B$, so $|S|\leq {|B|\choose 2}\leq |B|^2$. 

So, suppose that the bound holds for $\R^n$; we want to verify it for $\R^{n+1}$. Let $c=\frac{1}{2^n}$. Consider the planes $P_1,...,P_k$ and lines $\ell_1,...,\ell_m$ through points in $S$ with each $P_i$ parallel to $\mathrm{span}(v_1,...,v_{n})$ and $\ell_i$ parallel to $v_{n+1}$. Let $p_i=|S\cap P_i|$ and $q_i=|S\cap \ell_i|$.

Note that for each $s\in P_i\cap S$, there is some $r$ such that $P_i\cap S$ contains $s\pm rv_i$ for $1\leq i\leq n$. So, by the inductive hypothesis:

\[|B\cap P_i|\geq 2c p_i^{(2n-1)/(2n)}\quad\mbox{and}\quad|B\cap \ell_i|\geq q_i^{1/2},\]

so that
\[|B|\geq 2c \sum_{i\leq k} p_i^{(2n-1)/(2n)}\quad\mbox{and}\quad |B|\geq\sum_{i\leq m} q_i^{1/2}.\]
Let $a_1,...,a_w$ be the $p_i$ which are less than $|S|^{n/(n+1)}$. There are two cases we need to consider.

\paragraph{Case 1: $a_1+...+a_w\geq\dfrac{1}{2}|S|$.} Here we have the following (using $a_i\leq |S|^{\frac{n}{n+1}}$ for the third inequality):

\[|B|\geq 2c\sum_{i\leq k} p_i^{\frac{2n-1}{2n}}
\geq 2c\sum_{i\leq w} a_i^{\frac{2n-1}{2n}}
\geq 2c \sum_{i\leq w} \dfrac{a_i}{|S|^{\frac{1}{2(n+1)}}}
\geq c|S|^{\frac{2(n+1)-1}{2(n+1)}}.\]

\paragraph{Case 2: $a_1+...+a_w<\dfrac{1}{2}|S|.$}

Here, for each plane $P_i$ where $p_i>|S|^{\frac{n}{n+1}}$ color $P_i\cap S$ blue. Note that there are at most $|S|^{\frac{1}{n+1}}$ planes with blue points. Let $q'_i$ be the number of blue points on $\ell_i$. Since the $\ell_i$ do not lie in any $P_i$ we have

\[q'_i\leq |S|^{\frac{1}{n+1}} \quad\mbox{and}\quad q'_i\leq q_i.\]

Also, since every point in $S$ has some line $\ell_i$ going through it

\[\sum_{i\leq m} q_i' \geq |S|-a_1-...-a_w\geq \dfrac{1}{2}|S|,\]

so that

\[|B|\geq \sum_{i\leq m} \sqrt{q_i'} 
\geq \sum_{i\leq m} \dfrac{q_i'}{|S|^{\frac{1}{2(n+1)}}}
\geq \dfrac{1}{2} |S|^{\frac{2(n+1)-1}{2(n+1)}}
\geq c |S|^{\frac{2(n+1)-1}{2(n+1)}}.\] 
\end{proof}

To deduce the $(n,\ell)$-lemma, the generalization of \cite[Theorem 4.2]{op}, we will appeal to Lovasz's corrolary of the Katona--Kruskal Theorem:

\begin{thm}[Katona \cite{Ka}, Kruskal \cite{Kr}]
Let $Y$ be a set of $b$-element sets for some $b\in\N$, let $X$ be the $(b-c)$ element subsets of sets in $Y$, and let
\[|Y|={n_1 \choose k}+{n_2\choose k-1}+...+{n_j\choose k-j},\]
where the $n_i$ are a sequence of nonnegative decreasing integers. Then,
\[|X|\geq {n_1 \choose k-c}+{n_2\choose k-c-1}+...+{n_j\choose k-c-j}.\]
\end{thm}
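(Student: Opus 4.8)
The plan is to first peel off the parameter $c$, reducing to the classical single–shadow statement, and then prove that by compression. Observe that $X$ is exactly the $c$-fold lower shadow $\partial^{c}Y$, where $\partial\mathcal{F}:=\{Z:|Z|=|A|-1\text{ for }A\in\mathcal{F},\ Z\subseteq A\text{ for some }A\in\mathcal{F}\}$: any $(k-c)$-element subset $Z$ of a member of $Y$ lies in a maximal chain up to that member, so $Z\in\partial^{c}Y$, and conversely $\partial^{c}Y\subseteq X$. The right-hand side of the asserted bound for a single shadow, $\binom{n_1}{k-1}+\binom{n_2}{k-2}+\cdots+\binom{n_j}{k-j}$, is again a cascade expression of the same shape with $k$ decremented (after discarding vanishing terms), and the minimum shadow size $g_k(m)$ over $k$-uniform families of size $m$ is non-decreasing in $m$ (a subfamily has a no-larger shadow). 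Hence, once the $c=1$ statement is known, $|\partial^{i+1}Y|\ge g_{k-i}(|\partial^{i}Y|)\ge g_{k-i}\bigl(\binom{n_1}{k-i}+\cdots\bigr)=\binom{n_1}{k-i-1}+\cdots$, so applying the $c=1$ case $c$ times in succession turns the hypothesis on $|Y|$ into exactly the claimed bound on $|X|$. It remains to prove the case $c=1$.

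For $c=1$ I would use the shifting (compression) method. For $i<j$ let $s_{ij}$ act on a $k$-set $A$ by $s_{ij}(A)=(A\setminus\{j\})\cup\{i\}$ if $j\in A,\ i\notin A$ and $s_{ij}(A)=A$ otherwise, and on a family by $S_{ij}(\mathcal{F})=\{s_{ij}(A):A\in\mathcal{F}\}\cup\{A\in\mathcal{F}:s_{ij}(A)\in\mathcal{F}\}$. The key lemma is that $|S_{ij}(\mathcal{F})|=|\mathcal{F}|$ (a bijection count) and $|\partial S_{ij}(\mathcal{F})|\le|\partial\mathcal{F}|$, the latter because $\partial S_{ij}(\mathcal{F})\subseteq S_{ij}(\partial\mathcal{F})$ by a small case analysis on how a $(k-1)$-set meets $\{i,j\}$, together with $|S_{ij}(\partial\mathcal{F})|=|\partial\mathcal{F}|$. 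Since each shift does not increase $\sum_{A\in\mathcal{F}}\sum_{a\in A}a$, iterating the $S_{ij}$ over all pairs terminates at a \emph{shifted} family $\mathcal{F}^{*}$ with $|\mathcal{F}^{*}|=|Y|$ and $|\partial\mathcal{F}^{*}|\le|\partial Y|$, so it suffices to bound the shadow of a shifted family.

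Let $\mathcal{F}$ be shifted on ground set $\di{1,n}$ with $n$ the largest element actually used (the case $k=n$ being trivial). Put $\mathcal{F}_0=\{A\in\mathcal{F}:n\notin A\}$ and $\mathcal{F}_1=\{A\setminus\{n\}:A\in\mathcal{F},\ n\in A\}$, both again shifted. Sorting $(k-1)$-sets by whether they contain $n$ gives $\partial\mathcal{F}=(\partial\mathcal{F}_0\cup\mathcal{F}_1)\sqcup\{B\cup\{n\}:B\in\partial\mathcal{F}_1\}$, and shiftedness forces $\mathcal{F}_1\subseteq\partial\mathcal{F}_0$, so $|\partial\mathcal{F}|=|\partial\mathcal{F}_0|+|\partial\mathcal{F}_1|$. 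One then inducts on $k+|\mathcal{F}|$: apply the inductive bound to $\mathcal{F}_0$ (a family of $k$-sets) and to $\mathcal{F}_1$ (a family of $(k-1)$-sets) and recombine the two cascade estimates using Pascal's identity $\binom{a}{b}=\binom{a-1}{b}+\binom{a-1}{b-1}$, handling separately the clean case $n\le n_1$ (where $|\mathcal{F}|\le\binom{n}{k}\le\binom{n_1}{k}\le|\mathcal{F}|$ forces $\mathcal{F}$ to be all $k$-subsets of $\di{1,n_1}$ and the bound is immediate) and the case $n\ge n_1+1$ (where one uses $\mathcal{F}_1\subseteq\partial\mathcal{F}_0$ and the relation of $|\mathcal{F}_0|,|\mathcal{F}_1|$ to the cascade of $|\mathcal{F}|$ to reassemble the target).

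I expect the main obstacle to be this last step: closing the induction requires tracking precisely how the cascade representation of $|\mathcal{F}|$ distributes across $|\mathcal{F}_0|$ and $|\mathcal{F}_1|$, matching binomial terms exactly rather than merely up to constants, and this bookkeeping (together with the routine but fussy case check in the compression lemma) is where all the real work lies. I would also note that if only the weaker Lov\'asz consequence is needed in the applications --- that $|\mathcal{F}|=\binom{x}{k}$ for real $x\ge k$ forces $|\partial\mathcal{F}|\ge\binom{x}{k-1}$ with generalized binomial coefficients --- then after the compression reduction a shorter induction on $k$ alone suffices, bypassing the discrete cascade bookkeeping entirely.
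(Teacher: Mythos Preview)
The paper does not supply its own proof of this statement: the Katona--Kruskal theorem is quoted as a classical result, and the paper simply points the reader to Frankl's short proof in \cite{kklproof} (and to the original papers \cite{Ka}, \cite{Kr}) before invoking its Lov\'asz corollary in the argument for the $(n,\ell)$-dimensional lemma. So there is nothing in the paper to compare your argument against beyond that citation.

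That said, your sketch is the standard compression proof and is essentially the approach of the cited reference. The reduction from general $c$ to $c=1$ by iterating the shadow is correct; the shifting lemma ($|S_{ij}(\mathcal{F})|=|\mathcal{F}|$ and $\partial S_{ij}(\mathcal{F})\subseteq S_{ij}(\partial\mathcal{F})$) is stated accurately; and the decomposition $|\partial\mathcal{F}|=|\partial\mathcal{F}_0|+|\partial\mathcal{F}_1|$ for a shifted family, together with the observation $\mathcal{F}_1\subseteq\partial\mathcal{F}_0$, is exactly right. You are also right that the only place real work remains is the inductive recombination of the cascade representations of $|\mathcal{F}_0|$ and $|\mathcal{F}_1|$ via Pascal's identity, and your final remark is apt: since the paper only ever uses the Lov\'asz form $|Y|=\binom{x}{b}\Rightarrow|X|\ge\binom{x}{b-c}$, the cleaner real-$x$ induction would suffice for everything the paper needs.
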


\begin{cor}[Lovasz \cite{L}]
Let $X,Y, b,c$ be as above, and let $x\in \R$ be such that $|Y|={x\choose b}$, then $|X|\geq {x \choose b-c}$, where ${x \choose b}=\frac{x(x-1)...(x-b+1)}{b!}$.
\end{cor}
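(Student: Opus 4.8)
The plan is to deduce the corollary from the Katona--Kruskal theorem, the work being to replace the cascade‑shaped lower bound that theorem gives by a single generalized binomial coefficient. Write the cascade representation of $|Y|$ furnished by the theorem, $|Y|=\binom{n_1}{b}+\binom{n_2}{b-1}+\cdots+\binom{n_j}{b-j+1}$ with integers $n_1>n_2>\cdots>n_j$ (discarding any vanishing terms); the theorem gives $|X|\geq\binom{n_1}{b-c}+\binom{n_2}{b-c-1}+\cdots+\binom{n_j}{b-c-j+1}$. Thus it is enough to prove the purely numerical statement: \emph{if $\binom{n_1}{b}+\cdots+\binom{n_j}{b-j+1}=\binom{x}{b}$ with $n_1>\cdots>n_j\geq 0$ and $0\leq c\leq b$, then $\binom{n_1}{b-c}+\cdots+\binom{n_j}{b-c-j+1}\geq\binom{x}{b-c}$.}

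I would prove this by induction on $j$. For $j=1$, $x=n_1$ is an integer and both sides coincide. For $j\geq 2$, peel off the leading term: the tail $R:=\binom{n_2}{b-1}+\cdots+\binom{n_j}{b-j+1}$ is itself a cascade representation ``at level $b-1$'' with leading index $n_2$, so let $x'$ be the real number with $\binom{x'}{b-1}=R$. A short induction on the number of terms shows $R<\binom{n_2+1}{b-1}$, hence $R<\binom{n_1}{b-1}$ (as $n_2+1\leq n_1$), and therefore both $x'<n_1$ and $\binom{x}{b}=\binom{n_1}{b}+R<\binom{n_1+1}{b}$, i.e.\ $n_1\leq x<n_1+1$. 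The inductive hypothesis applied to $R$ gives $\binom{n_2}{b-c-1}+\cdots+\binom{n_j}{b-c-j+1}\geq\binom{x'}{b-c-1}$, so the whole thing reduces to the two‑term inequality
\[
\binom{n_1}{b-c}+\binom{x'}{b-c-1}\ \geq\ \binom{x}{b-c}
\qquad\text{given}\qquad
\binom{n_1}{b}+\binom{x'}{b-1}=\binom{x}{b},
\]
where $n_1\in\Z$, $n_1\leq x<n_1+1$, $b-2\leq x'<n_1$.

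For this last inequality, write $n=n_1$, $t=b-c$ (the cases $t\leq 0$ being trivial), and set $\delta=x-n\in[0,1)$; regard $x'$ as the function of $\delta$ determined by $\binom{x'(\delta)}{b-1}=\binom{n+\delta}{b}-\binom{n}{b}$, and let
\[
\Psi(\delta)=\binom{n}{t}+\binom{x'(\delta)}{t-1}-\binom{n+\delta}{t}.
\]
We want $\Psi\geq 0$ on $[0,1]$. The endpoints are favourable: $x'(0)=b-2$ gives $\Psi(0)=\binom{b-2}{t-1}\geq 0$, and $x'(1)=n$ together with Pascal's identity gives $\Psi(1)=\binom{n}{t}+\binom{n}{t-1}-\binom{n+1}{t}=0$. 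Hence it suffices to check that $\Psi$ is concave on $[0,1]$ (a concave function nonnegative at the two endpoints of an interval is nonnegative on it). Since $y\mapsto\binom{y}{t}$ is convex on $[t-1,\infty)$, the summand $-\binom{n+\delta}{t}$ is concave, so concavity of $\Psi$ comes down to concavity of $\delta\mapsto\binom{x'(\delta)}{t-1}$, which one attacks by differentiating the relation $\binom{x'(\delta)}{b-1}=\binom{n+\delta}{b}-\binom{n}{b}$ twice and using $x'<n\leq x<n+1$.

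The main obstacle is this concavity step --- equivalently, ruling out that $\Psi$ dips below $0$ before returning to $\Psi(1)=0$. The map $\delta\mapsto\binom{x'(\delta)}{t-1}$ is the composition of a function that is only \emph{eventually} concave with a mildly convex one, so it is not concave for any soft reason, and one must genuinely control the competing second‑order terms by hand, using the precise local expansions of $\binom{\cdot}{m}$ and of the inverse of $y\mapsto\binom{y}{b-1}$ at the relevant points together with the constraint $x'<n$. All the rest --- the cascade bookkeeping, the bound $R<\binom{n_2+1}{b-1}$, and the inductions --- is routine; and the passage from general $c$ to this reduction may alternatively be organised by first treating $c=1$ and iterating, re‑extracting the real parameter after each shadow step via monotonicity of the generalized binomials.
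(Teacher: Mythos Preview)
The paper does not supply its own proof of this corollary: immediately after stating it, the text says ``A short proof of these is given in \cite{kklproof}'' and moves on. So there is no in-paper argument to compare your proposal against.

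On its own merits, your proposal is incomplete. The induction on the cascade length and the reduction to the two-term inequality
\[
\binom{n}{t}+\binom{x'}{t-1}\ \ge\ \binom{x}{t}
\qquad\text{subject to}\qquad
\binom{n}{b}+\binom{x'}{b-1}=\binom{x}{b}
\]
are fine bookkeeping, but the entire content of the corollary has been pushed into the step you call ``the main obstacle'': showing $\Psi(\delta)\ge 0$ on $[0,1]$. You yourself note that $\delta\mapsto\binom{x'(\delta)}{t-1}$ is ``not concave for any soft reason'' and that one must ``genuinely control the competing second-order terms by hand,'' and then you do not do so. Until that computation (or some replacement for it) is actually carried out, nothing has been proved. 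It is also worth noting that deducing the real-$x$ Lov\'asz bound \emph{from} the integer Kruskal--Katona cascade is the awkward direction: the short proofs in the literature (Lov\'asz's original, and Frankl's \cite{kklproof} cited here) establish the Lov\'asz form directly via compression/shifting, rather than by first obtaining the cascade bound and then trying to smooth it.
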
 

A short proof of these is given in \cite{kklproof}.

\begin{thm}[$(n,\ell)$-dimensional Lemma]\label{nllem}
If $A\subseteq \R^\ell$, $S\subseteq \R^n$ are finite, and
\begin{equation} \label{nlcond} \forall x\in S\;\exists r\in\R^+\;\forall I\in \setc n \ell\;\forall \sigma\in \{-1,1\}^\ell:\;\; x_I+r\sigma\in A, \end{equation}
then $|A|\geq \Omega\left(|S|^{\ell(2n-1)/(2n^2)}\right).$
\end{thm}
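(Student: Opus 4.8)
The plan is to bootstrap from the $n$-dimensional Lemma (Lemma~\ref{nlem}) by a counting argument that exploits the full family of axis-spanned projections simultaneously. Fix $\ell\leq n$ and a set $S\subseteq\R^n$ satisfying (\ref{nlcond}). For each coordinate subset $I\in\setc n\ell$ let $\pi_I\colon\R^n\to\R^\ell$ be the projection onto the coordinates in $I$, and set $S_I=\pi_I(S)$. The hypothesis says precisely that for every $x\in S$ there is a single radius $r$ such that, for \emph{every} $I$, the point $x_I=\pi_I(x)$ has an $\ell$-cube of half-side $r$ around it with all vertices in $A$. In particular each $S_I$ is a set in $\R^\ell$ to which Lemma~\ref{nlem} applies with the standard basis vectors, giving $|A|\geq\Omega(|S_I|^{(2\ell-1)/(2\ell)})$ for every $I$. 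This alone is too weak; the point is to choose $I$ so that $|S_I|$ is large, i.e.\ so that the projection loses little.

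The second ingredient is the Katona--Kruskal/Lov\'asz inequality, used as a "projection inequality" in the spirit of Loomis--Whitney but in the discrete, shadow-counting form. First I would reduce to the case where $S$ lies in a product grid $\di{1,m}^n$ (by replacing each coordinate's value set with an order-isomorphic copy of an initial segment of $\N$, which does not change any of the cardinalities $|S|$ or $|S_I|$ and preserves (\ref{nlcond}) up to harmless rescaling of radii — the radii need not stay integral, which is why the lemma is stated over $\R^+$). Then the central claim is the combinatorial estimate
\begin{equation}\label{eq:shadowbound}
\max_{I\in\setc n\ell}|S_I|\;\geq\;|S|^{\ell/n},
\end{equation}
which is a uniform lower bound on the largest $\ell$-dimensional shadow of an $n$-dimensional set. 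This is exactly the discrete Loomis--Whitney inequality: $|S|^{n-1}\le\prod_{i}|S_{\{1,\dots,n\}\setminus\{i\}}|$ in the hyperplane case $\ell=n-1$, and its iterate $|S|^{\binom{n-1}{\ell-1}}\le\prod_{I\in\setc n\ell}|S_I|$ in general, from which (\ref{eq:shadowbound}) follows by averaging since there are $\binom n\ell$ terms and $\binom n\ell\big/\binom{n-1}{\ell-1}=n/\ell$. Alternatively this is precisely where the Katona--Kruskal corollary enters: viewing $S$ as a family of $n$-element "labelled" sets and the projections as taking shadows, Lov\'asz's inequality ${|Y|={x\choose b}}\Rightarrow{|X|\ge{x\choose b-c}}$ gives the same shadow bound in the form the authors have set up, which is presumably why they quoted it rather than Loomis--Whitney directly. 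I would phrase it through Katona--Kruskal to match the paper's machinery.

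Granting (\ref{eq:shadowbound}), the proof finishes in one line: pick $I$ attaining the maximum, apply Lemma~\ref{nlem} to $S_I\subseteq\R^\ell$ (which satisfies the cube-vertex hypothesis with a common radius, inherited from (\ref{nlcond})), and conclude
\[
|A|\;\geq\;\Omega\!\left(|S_I|^{\frac{2\ell-1}{2\ell}}\right)\;\geq\;\Omega\!\left(\bigl(|S|^{\ell/n}\bigr)^{\frac{2\ell-1}{2\ell}}\right)\;=\;\Omega\!\left(|S|^{\frac{\ell(2\ell-1)/(2\ell)}{n}\cdot 1}\right)\;=\;\Omega\!\left(|S|^{\frac{2\ell-1}{2n}}\right).
\]
Hmm — this gives exponent $(2\ell-1)/(2n)$, whereas the target is $\ell(2n-1)/(2n^2)$; these agree when $\ell=n$ but not otherwise, so a naive single projection is \emph{not} enough, and the real argument must instead project down one coordinate at a time and track the radii, running an induction on $n-\ell$ that interleaves the shadow bound with Lemma~\ref{nlem} at each stage so that the $\ell$-dimensional cube structure is preserved throughout the descent. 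So the corrected plan is an induction on $n$ with $\ell$ fixed: reduce from dimension $n$ to dimension $n-1$ by slicing $S$ into fibers over a well-chosen coordinate hyperplane, use Katona--Kruskal to control how the relevant $\ell$-shadows shrink, apply the inductive bound on each slice, and sum using a convexity/Hölder step exactly as in the Case~1/Case~2 dichotomy of Lemma~\ref{nlem}'s proof. The main obstacle, and the step I expect to require the most care, is getting the exponent bookkeeping in this induction to close to $\ell(2n-1)/(2n^2)$ rather than to the weaker $(2\ell-1)/(2n)$: one must split the fibers at the threshold $|S|^{(n-1)/n}$ (not $|S|^{\ell/n}$), apply the shadow inequality to the "heavy" part and the one-dimensional midpoint bound together with the inductive hypothesis to the "light" part, and verify that the two resulting exponents both dominate $\ell(2n-1)/(2n^2)$ — essentially an $n$-dimensional version of the two-case analysis already carried out for Lemma~\ref{nlem}, now with an extra $\ell$-dependent shadow-counting ingredient supplied by Katona--Kruskal.
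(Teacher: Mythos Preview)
You correctly diagnose that your first approach---project $S$ down to $\R^\ell$, apply the $\ell$-dimensional Lemma, and use a Loomis--Whitney type shadow bound on $S$---yields the exponent $(2\ell-1)/(2n)$, which is strictly weaker than the target $\ell(2n-1)/(2n^2)$ whenever $\ell<n$. The ``corrected plan'' you sketch, however, does not close the gap. The Case~1/Case~2 dichotomy in Lemma~\ref{nlem} works because $B$ and $S$ live in the \emph{same} space: slicing $S$ by hyperplanes and by lines simultaneously slices $B$, and the two partitions give complementary lower bounds that can be summed. In your situation $A\subseteq\R^\ell$ while $S\subseteq\R^n$, so slicing $S$ along its $n$-th coordinate does not induce any corresponding decomposition of $A$; the ``one-dimensional midpoint bound'' has nothing to bite on. Your induction would only ever yield $|A|\ge\Omega\bigl((\max_t|S^{(t)}|)^{\alpha_{n-1}}\bigr)$ for the inductive exponent $\alpha_{n-1}=\ell(2n-3)/(2(n-1)^2)$, and even on a perfectly balanced $S$ this gives $|S|^{(n-1)\alpha_{n-1}/n}=|S|^{\ell(2n-3)/(2n(n-1))}$, which is again strictly below the target.

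The paper's argument goes in the \emph{opposite} direction: rather than projecting $S$ down, it lifts $A$ up. Set
\[
B=\{y\in\R^n:\ y_I\in A\ \text{for every}\ I\in\setc n\ell\}.
\]
The point of the common radius in (\ref{nlcond}) is precisely that for each $x\in S$ and each $\sigma\in\{-1,1\}^n$ one has $(x+r\sigma)_I=x_I+r\sigma_I\in A$ for \emph{all} $I$, hence $x+r\sigma\in B$. Thus $B$ contains the full $n$-cube of vertices around each point of $S$, and Lemma~\ref{nlem} (in dimension $n$, not $\ell$) gives $|B|\ge\Omega(|S|^{(2n-1)/(2n)})$. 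Katona--Kruskal--Lov\'asz is then applied not to $S$ and its shadows but to the pair $(B,A)$: after symmetrising $A$ under coordinate permutations and passing to unordered coordinate sets $\hat A,\hat B$, every $\ell$-subset of a member of $\hat B$ lies in $\hat A$, so $|\hat B|={x\choose n}$ forces $|\hat A|\ge{x\choose\ell}$, i.e.\ $|A|\ge\Omega(|B|^{\ell/n})$. Chaining the two inequalities gives exactly $\ell(2n-1)/(2n^2)$. The conceptual shift you are missing is that the shadow inequality controls how large $B$ can be relative to $A$, not how small the projections of $S$ can be.
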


\begin{proof}

The condition (\ref{nlcond}) implies that $B=\{x\in \R^n: \forall I\in \setc n \ell \;x_I\in A\}$ and $S$ satisfy the hypotheses of the $n$-dimensional lemma, Lemma \ref{nlem}; indeed, let $x\in S$, then by linearity of projections

\[\exists r\in\R^+\;\forall\sigma\in\{-1,1\}^n\;\forall I\in\setc n \ell :\quad x_I+\sigma_Ir\in A \quad\iff\]
\[\exists r\in\R^+\;\forall\; \sigma\in\{-1,1\}^n\; \forall I:\quad (x+\sigma r)_I\in A \quad\iff\]
\[\exists r\in\R^+\;\forall \sigma\in\{-1,1\}^n:\quad x+\sigma r\in B.\]
So, by the previous lemma we have

\begin{equation} \label{nlb} |B|\geq \Omega(|S|^{(2n-1)/(2n)}) .\end{equation}

 To compare $B$ and $A$, we can make the following simplifications. By translating $A$ appropriately we may assume that for any $x\in A$, the $x_i$ are distinct. Let the symmetric group $\Sigma_\ell$ act on $\R^m$ by $\pi(x_1,...,x_\ell)=(x_{\pi(1)},...,x_{\pi(\ell)})$. Taking the orbit of $A$ under $\Sigma_\ell$ only increases the size $A$ by a factor of $\ell!$, so we may assume that if $x\in A$, then $(x_{\pi 1},...,x_{\pi \ell})\in A$ for any $\pi\in \Sigma_\ell$. Now let \[\hat A=\{\{x_1,...,x_\ell\}: x\in A\}\quad \hat B=\{\{x_1,...,x_n\}: x\in B\}.\]

Note that $|A|\geq |\hat A|$ and $|B|\leq n!|\hat B|$. If $C\in \hat B$ and $C'\subset C$ with $|C'|=\ell$, then $C'\in \hat A$, so we may apply the Katona--Kruskal--Lovasz Theorem to $\hat B$ and $\hat A$. Let $x\in\R$ be such that ${{x}\choose {n}}=|\hat B|$. Then $|\hat A|\geq {x\choose \ell}$. So, we have then for some $c\in \R^+$:

\[|A|\geq |\hat A|= {x \choose \ell }\geq c x^\ell\]
\[|B|\leq n!|\hat{B}|=n!{x \choose n}\leq x^n.\]

Combining this with (\ref{nlb}), we get \[|A|\geq \Omega\left(|B|^{\frac{\ell}{n}}\right)\geq \Omega\left(|S|^\frac{\ell(2n-1)}{2n^2}\right).\]

\end{proof}

We will see that the packing dimension estimate reduces to almost exactly the above theorem. The next theorem finishes off the discrete problem and will later be used to give a bound for box-counting dimension. The argument given below generalizes an unpublished proof by D\'aniel T. Nagy \cite{N}.

\begin{thm}[Theorem \ref{int:discbound} in the Introduction]\label{discbound}
If $B, S\subseteq \Z^n$ are finite, and $B$ contains the discrete $k$-skeleton of a discrete cube around each point in $S$, then, for every $\alpha<1-\frac{n-k}{2n^2}$,
\[|B|\geq \Omega(|S|^\alpha).\]
\end{thm}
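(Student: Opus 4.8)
## Proof Proposal

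The plan is to bootstrap from the $(n,\ell)$-dimensional lemma (Theorem \ref{nllem}) by decomposing the discrete $k$-skeleton condition into two pieces: a "small" part living in $(n-k)$ coordinates, to which Theorem \ref{nllem} applies with $\ell = n-k$, and a "large" part consisting of the $k$-dimensional faces themselves, which forces a lower bound of the form $|B| \geq \Omega(|S|^{1/n})$ by a trivial projection/fiber argument. The point is that each of the two bounds alone is too weak, but a weighted combination — splitting $S$ according to whether the relevant fibers are "fat" or "thin" in the spirit of the Case 1 / Case 2 dichotomy in the proof of Lemma \ref{nlem} — yields any exponent $\alpha < 1 - \frac{n-k}{2n^2}$.

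Here is the structure in more detail. First I would fix $J \in \setc{n}{k}$ and look at the projection $\pi_J \colon \Z^n \to \Z^{n-k}$ onto the complementary coordinates; write $S_J = \pi_J(S)$. For a point $x \in S$ with associated radius $r$, the discrete $k$-skeleton of the cube around $x$ contains, for the face direction set $J$, the "cofactor" vertices: projecting away the $J$-coordinates, $B$ must contain $(\pi_J x) + r\sigma$ for every $\sigma \in \{-1,1\}^{n-k}$, and moreover this must hold simultaneously for every choice of which $n-k$ coordinates are the "short" ones — i.e. exactly condition (\ref{nlcond}) with $\ell = n-k$ applied to a suitable projection $A$ of $B$. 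So Theorem \ref{nllem} gives $|B| \geq \Omega(|S'|^{(n-k)(2n-1)/(2n^2)})$ for any "slice" $S'$ of $S$ that injects into its projection. Second, for the other bound: pick any coordinate direction, say the first; the slabs $x_1 = \mathrm{const}$ through points of $S$ number at most $|B_1|$ where $B_1 = \pi_1(B)$ is a projection of $B$, and by pigeonhole some slab contains $\geq |S|/|B_1|$ points of $S$; iterating the $k$-skeleton structure inside lower-dimensional slabs (or, more cleanly, just using that $B$ must contain a translate of $\di{0,r}$ in each of $k$ chosen directions around each point, hence $|B| \gtrsim r$ while the cube has $\gtrsim r^n$ lattice points) gives the crude bound $|B| \geq \Omega(|S|^{1/n})$.

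The actual argument then interpolates. Following Nagy's method as hinted, I would run an induction on $n$ (or on $n-k$), and at each stage split the points of $S$ into those lying in "light" fibers (over the $(n-k)$-coordinate projection, say those $x$ whose fiber $\pi_J\inv(\pi_J x) \cap S$ has size $\leq |S|^\theta$ for a threshold $\theta$ to be optimized) and those in "heavy" fibers. On the light-fiber points, the projections of $S$ are large, so Theorem \ref{nllem} applied to $S_J$ is efficient; on the heavy-fiber points, there are few distinct fibers, each fiber is an $(n-k)$-skeleton-type configuration in fewer coordinates to which the inductive hypothesis applies, and summing the per-fiber bounds $|B \cap (\text{fiber slab})| \geq \Omega(p_i^{\alpha'})$ via convexity (exactly as in Case 1 of Lemma \ref{nlem}'s proof) recovers the bound. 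Choosing $\theta$ to balance the two exponents produces $1 - \frac{n-k}{2n^2}$ in the limit, which explains the strict inequality $\alpha < 1 - \frac{n-k}{2n^2}$: the loss is in the convexity step and the fiber-count estimate, each costing an arbitrarily small $\epsilon$ in the exponent.

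The main obstacle, I expect, is the bookkeeping in the inductive step: making sure that when one restricts to a heavy fiber, the restricted set $B$ genuinely contains a discrete $(k')$-skeleton (for the appropriate smaller $k'$ and $n'$) around the restricted points — the radius $r$ varies from point to point, and one must check that passing to a coordinate slab preserves the skeleton structure with the same exponent target $1 - \frac{n'-k'}{2(n')^2}$ being at least as large as what we need. A secondary subtlety is verifying that the projection $A$ extracted from $B$ for the Theorem \ref{nllem} step is not much smaller than $B$ (it is a projection, so $|A| \leq |B|$, which is the direction we want) and that the radius $r$ in condition (\ref{nlcond}) can be taken uniform over the index set $\setc{n}{\ell}$ — which it can, since it is the single side-length of the cube around $x$. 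Once the combinatorial decomposition is set up correctly, the optimization of $\theta$ is a routine calculation.
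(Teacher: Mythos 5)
There is a genuine gap, and you have already put your finger on it: the light/heavy fiber decomposition over a coordinate projection does not interact correctly with the cube structure, because passing to a coordinate slab does not preserve the skeleton condition. The $k$-skeleton of the cube around a point in a slab is still an $n$-dimensional configuration — its vertices and faces stick out of the slab whenever the radius $r$ exceeds the slab width, which it can, since $r$ is unconstrained. There is no clean way to phrase the restricted problem as a lower-dimensional instance with a target exponent $1 - \frac{n'-k'}{2(n')^2}$ that is favorably related to $1 - \frac{n-k}{2n^2}$, and the radius dependence you flag is exactly why. Your second ingredient, the ``crude'' bound $|B| \geq \Omega(|S|^{1/n})$, is also not established: the reasoning ``$B$ contains a translate of $\di{0,r}$ so $|B| \gtrsim r$ while the cube has $\gtrsim r^n$ lattice points'' does not constrain $|S|$, because $S$ need not lie inside any one cube; if all radii are $r=1$ this gives only $|B|\geq 2$.

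The paper's proof uses a different dichotomy and a different iteration axis. Instead of light versus heavy fibers, it splits the points of $S$ by cube \emph{side length}: large cubes have side at least $|S|^{R(\alpha)}$ for a threshold $R(\alpha)$ tuned to the \emph{current} exponent $\alpha$. For large cubes, the $(n,n-k)$-lemma applied to the corner-projections gives $\Omega(|S|^{(n-k)(2n-1)/2n^2})$ essentially distinct $k$-faces, each of which forces $\gtrsim |S|^{kR(\alpha)}$ points of $B$, yielding the exponent $f(\alpha)=R(\alpha)k + \frac{(2n-1)(n-k)}{2n^2}$. For small cubes, one tiles $\Z^n$ by boxes of side $|S|^{R(\alpha)}$; each small skeleton fits in $O(1)$ boxes, so one can apply the current exponent $\alpha$ inside each box and sum via convexity, again yielding $f(\alpha)$. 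The bootstrapping is thus an iteration in the exponent — one shows $f$ is monotone on $[0,\beta]$ with fixed point $\beta = 1-\frac{n-k}{2n^2}$, so $f^m(0)\to\beta$ — not an induction on dimension. This avoids entirely the problem of restricting the skeleton condition to a coordinate slab, since the tiling-box argument only needs the skeleton condition inside each box, which holds verbatim for small cubes. Your proposal has the right ingredients (the $(n,\ell)$-lemma and a two-case analysis balanced by a threshold), but the threshold must be on cube size, not fiber size, and the recursion must be in the exponent, not the dimension.
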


\begin{proof}
Let $R(\alpha)=\frac{2n^2-(2n-1)(n-k)}{(2n^2)(k+n(1-\alpha))}$, $f(\alpha)=R(\alpha)k+\frac{(2n-1)(n-k)}{2n^2}$, and $\beta=1-\frac{n-k}{2n^2}$. Call $\alpha$ \emph{good} if, whenever $B$ contains the discrete $k$-skeleton of a cube around each point in $S$, then $|B|\geq \Omega(|S|^\alpha)$.  Note that the discrete construction, corollary \ref{disccon}, shows that any good $\alpha$ is at most $\beta.$

 We will show that if $\alpha$ is good, so is $f(\alpha)$ and that $\lim_{n}f^n(0)=\beta$. One can check that $R(\alpha)$ has been chosen so that 

\[f(\alpha)= 1-(R(\alpha)n(1-\alpha)).\]

Now, let $\alpha$ be good. Given sets $S,B$ as in the statement, call a cube in $B$ \emph{large} if it has side length at least $|S|^{R(\alpha)}$, and \emph{small} otherwise.

\paragraph{Case $1$:} Suppose at least $\frac{|S|}{2}$ points of $S$ are centers of large cubes. For any set $X\subseteq \R^n$, $1\leq \ell\leq n$,  and $I\in\setc n \ell$, let $X_I=\{x_I:x\in X\}$. Let $V$ be the of vertices of large cubes in $B$ and $A=\bigcup_{I\in \setc n {n-k}} V_I$. Let $S_1$ be the set of centers of the large cubes. Then, $A,S_1$ satisfy the $(n,n-k)$-dimensional lemma (Lemma \ref{nllem} with $l=n-k$), so

\[|A|\geq \Omega\left(|S_1|^{\frac{(n-k)(2n-1)}{2n^2}}\right)\geq\Omega\left(|S|^{\frac{(n-k)(2n-1)}{2n^2}}\right).\]

At least one $V_I$ will contain ${n\choose k}\inv|A|\geq\Omega\left(|S|^{\frac{(2n-1)(n-k)}{2n^2}}\right)$ points. The set $B$ contains $\Omega(|V_I|)$ $k$-faces of large cubes whence


\[|B|\geq \Omega(|V_I||S|^{kR(\alpha)})\geq \Omega\left(|S|^{R(\alpha)k+\frac{(2n-1)(n-k)}{2n^2}}\right)=\Omega(|S|^{f(\alpha)}).\]

\paragraph{Case $2$:} Suppose that $\frac{|S|}{2}$ points of $|S|$ are centers of small cubes in $B$. Denote these centers by $S_2$. Divide $\Z^n$ into cubes of side length $|S|^{R(\alpha)}$. Assume that these cubes of the partition contain $x_1,...,x_m$ points of $S_2$ (ignoring the empty cubes). Then, 

\[1\leq x_i\leq |S|^{nR(\alpha)},\quad \sum x_i=|S_2|\geq \frac{|S|}{2}.\]

For each point $b$ of $B$ consider the small cubes containing $b$. The centers of these $n$-cubes cannot be in more than $2^n$ cubes from the partition. Let $Y_i$ be the union of small cubes in $B$ with centers counted in $x_i$. We have $\sum_i |Y_i|\leq2^n |B|$, and $|Y_i|\geq \Omega(|x_i^{\alpha}|)$ since $\alpha$ is good. So,
\begin{align*} 
2^n|B| & \geq \sum \Omega(x_i^\alpha) \\
	 & = \sum \Omega\left(\frac{ x_i}{x_i^{(1-\alpha)}} \right)\\
            &  \geq \Omega\left(\frac{\sum x_i}{|S|^{R(\alpha)n(1-\alpha)}}\right) \\
            & =\Omega\left(|S|^{1-(R(\alpha)n(1-\alpha))}\right) \\
            & =\Omega\left(|S|^{f(\alpha)}\right).
\end{align*}

Algebra shows that $f$ has two fixpoints, $1$ and $\beta$. By inspection $f$ is monotone increasing on the interval $[0, \beta]\subset [0,1]$. We then have

\[0\leq f(0)\Rightarrow f^n(0)\leq f^{n+1}(0)\]
and
\[0<\beta\Rightarrow f^n(0)\leq f^n(\beta)=\beta.\]

So, the sequence $f^n(0)$ must converge to $\beta$.
\end{proof}

\section{Dimension Theory Primer}\label{sec:dim}

Before proving results about dimension, we collect results from the general theory that we will use. First, we can equivalently characterize packing dimension as the countably stabilized box-counting dimension.

\begin{thm}[Packing Dimension Equivalents] \label{dimequiv}
The packing dimension and modified box-counting dimension are equivalent. That is, for $A\subseteq \R^n$

\[\dim_P(A)=\overline{\dim}_{MB}(A)=\inf\left\{\sup_{i\in\N} \overline{\dim}_B(A_i): A\subseteq \bigcup_{i\in\N} A_i\right\}.\]
\end{thm}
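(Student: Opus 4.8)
The plan is to prove the identity $\dim_P(A) = \overline{\dim}_{MB}(A)$ by establishing the two inequalities separately, working directly from the definition of packing dimension via packing measures. First I would recall the setup: the $s$-dimensional packing premeasure of a set $E$ is $P_0^s(E) = \limsup_{\delta\to 0}\{\sum_i (2r_i)^s : \{B(x_i,r_i)\}\text{ a }\delta\text{-packing of }E\}$, which is exactly the object whose critical exponent is $\overline{\dim}_B(E)$; the packing measure $P^s$ is then obtained by the usual Munroe construction $P^s(A) = \inf\{\sum_j P_0^s(A_j) : A \subseteq \bigcup_j A_j\}$, and $\dim_P(A)$ is the critical exponent of $P^s(A)$. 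So the statement to prove is essentially that taking the countable infimum of covers commutes, in the sense of critical exponents, with taking the $\limsup$ that defines $\overline{\dim}_B$.

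For the inequality $\overline{\dim}_{MB}(A) \le \dim_P(A)$: given any countable cover $A \subseteq \bigcup_i A_i$, I would show $\sup_i \overline{\dim}_B(A_i) \ge \dim_P(A)$. If $s > \sup_i \overline{\dim}_B(A_i)$ then each $P_0^s(A_i) = 0$, hence $\sum_i P_0^s(A_i) = 0$, so $P^s(A) = 0$ and $\dim_P(A) \le s$. Taking the infimum over covers and then letting $s$ decrease gives the bound. This direction is the easy one and is basically bookkeeping with the critical-exponent formalism.

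For the reverse inequality $\dim_P(A) \le \overline{\dim}_{MB}(A)$: the point is to produce, for any $t > \overline{\dim}_{MB}(A)$, a cover witnessing $\dim_P(A) \le t$. Pick a cover $A \subseteq \bigcup_i A_i$ with $\overline{\dim}_B(A_i) < t$ for all $i$ — such exists because $t$ exceeds the infimum-of-suprema. Then for each $i$, since $t > \overline{\dim}_B(A_i)$, we have $P_0^t(A_i) < \infty$ (the premeasure is finite above the box dimension); a priori it need not be zero, but $P_0^t$ is finite on each $A_i$, and this is enough: choosing a slightly larger exponent $t' \in (\max_i\overline{\dim}_B(A_i), t)$ — more carefully, choose $t' \in (\sup_i \overline{\dim}_B(A_i), t)$ if the sup is $<t$, or else refine the cover — one gets $P_0^{t'}(A_i) < \infty$; then since $t > t'$ a standard estimate gives $P_0^t(A_i)$ finite, and in fact one argues $P^t(A) < \infty$ hence $\dim_P(A) \le t$. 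The cleanest route is: for any $\epsilon>0$ choose a cover with $\sup_i \overline{\dim}_B(A_i) \le \overline{\dim}_{MB}(A) + \epsilon =: t$; then for $t' > t$, each $P_0^{t'}(A_i)=0$, so $P^{t'}(A)=0$, so $\dim_P(A) \le t'$; let $t' \downarrow t$ and $\epsilon \downarrow 0$.

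The main obstacle — really the only subtle point — is the interplay between the two distinct "stabilization" operations: $\overline{\dim}_B(A_i)$ is already a critical exponent (a $\limsup$ in $\delta$), while $P^s$ involves an additional countable infimum over covers on top of the premeasure $P_0^s$ (which itself contains a $\limsup$). One must be careful that the cover achieving the modified box dimension can be re-used, up to an arbitrarily small loss in the exponent, as a cover in the packing-measure infimum; this works precisely because $P_0^s(E) = 0$ whenever $s > \overline{\dim}_B(E)$, which converts a box-dimension bound on each piece into a vanishing premeasure and hence a vanishing packing measure of the whole. I would cite Falconer \cite{Falconer} for the standard facts ($P_0^s(E)=0$ for $s>\overline{\dim}_B(E)$, and the definition of $P^s$ via covers) and keep the argument to the two short inequalities above rather than reproving the measure-theoretic infrastructure.
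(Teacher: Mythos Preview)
The paper does not prove this statement at all; it simply cites Falconer's textbook (Proposition~3.8 in \cite{Falconer}). Your plan to derive it from the packing-measure definition is reasonable, and the key fact you isolate --- that for bounded $E$ the critical exponent of the premeasure $P_0^s(E)$ is exactly $\overline{\dim}_B(E)$ --- is indeed what drives the equivalence.

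However, your two displayed arguments establish the \emph{same} inequality, namely $\dim_P(A)\le\overline{\dim}_{MB}(A)$, and your labels are swapped. In the first paragraph you announce the inequality $\overline{\dim}_{MB}(A)\le\dim_P(A)$ but then argue: for any cover $(A_i)$ and any $s>\sup_i\overline{\dim}_B(A_i)$ one has $P_0^s(A_i)=0$, hence $P^s(A)=0$, hence $\dim_P(A)\le s$; taking $s\downarrow\sup_i$ and then the infimum over covers yields $\dim_P(A)\le\overline{\dim}_{MB}(A)$, the opposite of what you claimed. Your second paragraph then reproves this same direction.

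The direction you never address is $\overline{\dim}_{MB}(A)\le\dim_P(A)$. Here one must start from $t>\dim_P(A)$ and \emph{produce} a cover whose pieces all have upper box dimension at most $t$. The argument runs the implication the other way through the premeasure: since $P^t(A)=0$, there is a cover $A\subseteq\bigcup_i A_i$ with $\sum_i P_0^t(A_i)<\infty$; after refining so that each $A_i$ is bounded, finiteness of $P_0^t(A_i)$ forces $\overline{\dim}_B(A_i)\le t$ (this is the half of the critical-exponent fact you did not invoke --- $P_0^t(E)<\infty\Rightarrow\overline{\dim}_B(E)\le t$). That cover witnesses $\overline{\dim}_{MB}(A)\le t$, and letting $t\downarrow\dim_P(A)$ finishes. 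Without this step your proof is incomplete.
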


See, for instance, \cite[Proposition 3.8]{packing} for a proof. Note that, since box-counting dimension is finitely stable (see, for instance \cite[Section 3.2]{packing}), we can require ascending unions:
\[\dim_P(A)=\inf\left\{\sup_{i\in\N} \overline{\dim}_B(A_i): A\subseteq \bigcup_{i\in\N} A_i, \; A_i\subseteq A_{i+1}\right\}.\]

It is helpful to have comparisons between the different notions of dimension.

\begin{thm}[Dimension Inequalities] \label{dimineq}
For $A\subseteq \R^n$ and $B\subseteq\R^\ell$, the following inequalities hold
\[\dim_H(A)\leq \dim_P(A),\quad\underline{\dim}_B(A)\leq \overline{\dim}_B(A).\]

\end{thm}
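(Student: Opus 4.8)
The final statement is Theorem~\ref{dimineq}, the ``Dimension Inequalities'' asserting $\dim_H(A)\leq\dim_P(A)$ and $\underline{\dim}_B(A)\leq\overline{\dim}_B(A)$. These are entirely standard facts from the general theory of fractal dimensions, so the plan is not to develop anything new but to cite or quickly recall the textbook arguments.

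For the inequality $\underline{\dim}_B(A)\leq\overline{\dim}_B(A)$, I would simply observe that this is immediate from the definitions: if $N_\delta(A)$ denotes the smallest number of sets of diameter at most $\delta$ needed to cover $A$, then the lower box dimension is $\liminf_{\delta\to 0}\frac{\log N_\delta(A)}{-\log\delta}$ and the upper box dimension is $\limsup_{\delta\to 0}\frac{\log N_\delta(A)}{-\log\delta}$, and $\liminf\leq\limsup$ for any function. One line suffices, or one can just cite \cite{Falconer}.

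For $\dim_H(A)\leq\dim_P(A)$, the cleanest route uses Theorem~\ref{dimequiv} (Packing Dimension Equivalents), which is already available in the excerpt: $\dim_P(A)=\inf\{\sup_i\overline{\dim}_B(A_i):A\subseteq\bigcup_i A_i\}$. Since Hausdorff dimension is countably stable and is bounded above by upper box dimension for any set (for a bounded set this is standard; in general one notes $\dim_H(A_i)\leq\overline{\dim}_B(A_i)$ can be taken over bounded pieces, or just invoke that $\dim_H\leq\overline{\dim}_{MB}$ directly), for any cover $A\subseteq\bigcup_i A_i$ we have $\dim_H(A)=\sup_i\dim_H(A_i)\leq\sup_i\overline{\dim}_B(A_i)$. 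Taking the infimum over all such covers gives $\dim_H(A)\leq\dim_P(A)$. Alternatively, and most economically, I would cite \cite[Chapter 3]{Falconer}, where both inequalities (and the full chain $\dim_H\leq\underline{\dim}_B\leq\overline{\dim}_B$ and $\dim_H\leq\dim_P\leq\overline{\dim}_B$) are proved.

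There is no real obstacle here; the only thing to be careful about is the countable-stability and bounded-versus-unbounded subtlety in the Hausdorff--packing comparison, which is precisely why routing through the modified box counting characterization (Theorem~\ref{dimequiv}) rather than plain box counting dimension is the right move. Given the elementary nature of the statement, the expected ``proof'' in the paper is a one- or two-sentence appeal to \cite{Falconer} together with Theorem~\ref{dimequiv}.
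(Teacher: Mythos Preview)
Your proposal is correct and matches the paper's approach: the paper does not prove this theorem at all but simply refers the reader to a standard reference (in this case \cite[Theorem 8.10]{inequalities}, i.e.\ Mattila, rather than Falconer). Your additional sketch via Theorem~\ref{dimequiv} is more detail than the paper provides, but is in the same spirit.
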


In general, $\dim_P(A)$ and $\underline{\dim}_B(A)$ are not comparable. For proof, see e.g. \cite[Theorem 8.10]{inequalities}. Finally, we will want to compare the dimensions of products of sets.

\begin{thm}[Product Rules] \label{dimprod}
For $A\subseteq\R^n$ and $B\subseteq \R^\ell$, the following inequalities hold:

\begin{enumerate}

\item $\dim_H(A)+\dim_H(B)\leq\dim_H(A\times B)\leq\dim_H(A)+\overline{\dim}_B(B)$,

\item $\dim_P(A)+\dim_H(B)\leq\dim_P(A\times B)\leq \dim_P(A)+\dim_P(B)$,

\item $\underline{\dim}_B(A\times B)\geq \underline{\dim}_B(A)+\underline{\dim}_B(B)$, and

\item $\overline{\dim}_B(A\times B)\leq \overline{\dim}_B(A)+\overline{\dim}_B(B).$

\end{enumerate}
\end{thm}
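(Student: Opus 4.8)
The three statements are standard facts about how the fractal dimensions behave under Cartesian products; the plan is to get the box-counting inequality and all of the upper bounds by direct covering arguments, to obtain the Hausdorff lower bound from Frostman's lemma, and to reduce the packing bounds to these, citing \cite{Falconer} for the one delicate point.

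\textbf{The easy inequalities.} For (3), if $\{U_p\}$ covers $A$ by sets of diameter at most $\delta$ and $\{V_q\}$ covers $B$ by sets of diameter at most $\delta$, then $\{U_p\times V_q\}$ covers $A\times B$ by sets of diameter at most $\sqrt2\,\delta$, so the covering numbers satisfy $N_{A\times B}(\sqrt2\,\delta)\le N_A(\delta)N_B(\delta)$; taking logarithms, dividing by $-\log\delta$, and passing to $\limsup$ gives $\overline{\dim}_B(A\times B)\le\overline{\dim}_B(A)+\overline{\dim}_B(B)$. For the upper bound in (1), I would fix $s>\dim_H(A)$ and $t>\overline{\dim}_B(B)$, choose $\delta_0$ with $N_B(\delta)\le\delta^{-t}$ for all $\delta<\delta_0$, and pick a cover $\{U_p\}$ of $A$ with every $|U_p|<\delta_0$ and $\sum_p|U_p|^s$ as small as desired (possible since $\mathcal H^s(A)=0$). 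Covering each copy of $B$ by at most $|U_p|^{-t}$ sets of diameter $|U_p|$, the resulting product cover of $A\times B$ has $\sum(\mathrm{diam})^{s+t}\le\sum_p|U_p|^{-t}|U_p|^{s+t}=\sum_p|U_p|^s$, so $\mathcal H^{s+t}(A\times B)=0$ and $\dim_H(A\times B)\le s+t$; now let $s\downarrow\dim_H(A)$ and $t\downarrow\overline{\dim}_B(B)$. For the upper bound in (2), I would invoke Theorem \ref{dimequiv}: given countable covers $A\subseteq\bigcup_pA_p$ and $B\subseteq\bigcup_qB_q$ with $\sup_p\overline{\dim}_B(A_p)\le\dim_P(A)+\varepsilon$ and $\sup_q\overline{\dim}_B(B_q)\le\dim_P(B)+\varepsilon$, the countable family $\{A_p\times B_q\}$ covers $A\times B$, and by (3) each $\overline{\dim}_B(A_p\times B_q)\le\dim_P(A)+\dim_P(B)+2\varepsilon$; hence $\dim_P(A\times B)\le\dim_P(A)+\dim_P(B)+2\varepsilon$, and $\varepsilon\downarrow 0$ finishes.

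\textbf{The Hausdorff lower bound.} Fix $s<\dim_H(A)$ and $t<\dim_H(B)$. By Frostman's lemma (applied to compact subsets and then exhausting) there are Borel probability measures $\mu$ on $A$ and $\nu$ on $B$ with $\mu(E)\le C(\mathrm{diam}\,E)^s$ and $\nu(F)\le C(\mathrm{diam}\,F)^t$ for all sets $E,F$. Any $G\subseteq\R^{n+\ell}$ of diameter $r$ lies in $\pi_1(G)\times\pi_2(G)$, a product of sets of diameter at most $r$, so $(\mu\times\nu)(G)\le\mu(\pi_1G)\,\nu(\pi_2G)\le C^2 r^{s+t}$; the mass distribution principle then gives $\dim_H(A\times B)\ge s+t$, and we let $s\uparrow\dim_H(A)$, $t\uparrow\dim_H(B)$.

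\textbf{The main obstacle: the lower bound in (2).} The inequality $\dim_P(A)+\dim_H(B)\le\dim_P(A\times B)$ is the one that needs real work, since $\dim_P$ has no one-line Frostman characterization. Here I would follow Tricot's argument (see \cite{Falconer}): fix $t<\dim_H(B)$ with a Frostman measure $\nu$ on a bounded Borel piece of $B$, take an \emph{arbitrary} countable cover $A\times B\subseteq\bigcup_iE_i$ with each $E_i$ bounded, and bound $N_{E_i}(\delta)$ from below at each dyadic scale $\delta$ by slicing in the $A$-coordinates: over those $\delta$-mesh cubes $Q$ meeting $A$, the slice $\{y:(x_Q,y)\in E_i\}$ needs at least $c\,\delta^{-t}\nu(\text{slice})$ cubes of side $\delta$ to cover, while for a fixed $Q$ the slice masses sum over $i$ to at least $\nu(B)$. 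The technical heart is to upgrade this scale-by-scale, summed-over-$i$ information into $\sup_i\overline{\dim}_B(E_i)\ge\dim_P(A)+t$ for \emph{every} such cover --- this is precisely where one must exploit the $\inf$--$\sup$ structure of modified box dimension (as opposed to plain box dimension, which is not even countably stable), so it is the step I would write out in full or cite verbatim. Granting it, $\dim_P(A\times B)=\overline{\dim}_{MB}(A\times B)\ge\dim_P(A)+t$ by Theorem \ref{dimequiv}, and letting $t\uparrow\dim_H(B)$ completes the proof.
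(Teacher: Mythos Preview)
The paper does not prove this theorem: it is stated in Section~\ref{sec:dim} as a background result from dimension theory, with the remark ``See \cite{dimprodref} for proofs of $(1)$ and $(2)$'' (Tricot's paper) and no argument given for $(3)$ either. So there is no proof in the paper to compare against; the theorem is simply imported.

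Your sketch is essentially the standard one and is correct in outline. The covering arguments for $(3)$ and the upper bounds in $(1)$ and $(2)$ are routine and fine as written. The Frostman/mass-distribution argument for the lower bound in $(1)$ is the usual one. You have correctly identified the lower bound in $(2)$ as the only genuinely delicate inequality and correctly attributed it to Tricot; your slicing heuristic points in the right direction, and your honest admission that the ``upgrade'' from scale-wise estimates to a bound on $\sup_i\overline{\dim}_B(E_i)$ is the technical heart is exactly right --- this is the step one either writes out carefully or cites. Since the paper itself just cites \cite{dimprodref} here, your proposal already goes further than the paper does.
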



See \cite{dimprodref} for proofs of $(1)$ and $(2)$ and \cite{sharples} for $(3)$ and $(4)$. All of the above inequalities in this section be strict.

\section{Packing and Box-Counting Estimates}\label{sec:estimates}

The $(n,\ell)$-dimensional lemma, and discrete bounds give rise to dimensional analogues. The proofs sketched in \cite{op} apply almost directly. For completeness, we fill in some details below.

\begin{thm}[Theorem \ref{int:pbbounds} part 2 in the Introduction] \label{boxbound}
If $B,S\subseteq \R^n$ and $B$ contains the $k$-skeleton of an $n$-cube around every point in $S$, then
\[\overline{\dim}_B(B)\geq \max\left\{\left(\frac{k}{n}+\frac{(n-k)(2n-1)}{2n^2} \right)\overline{\dim}_B(S),k\right\}\]
and similarly
\[\underline{\dim}_B(B)\geq \max\left\{\left(\frac{k}{n}+\frac{(n-k)(2n-1)}{2n^2} \right)\underline{\dim}_B(S),k\right\}.\]
\end{thm}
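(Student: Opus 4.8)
The plan is to transfer the discrete estimate, Theorem~\ref{discbound}, to the Euclidean setting by a scale-by-scale discretization, rather than to re-run any covering argument from scratch. The one point worth flagging at the outset is that cubes smaller than the discretization scale --- which in the discrete proof of Theorem~\ref{discbound} are what force the recursive ``good~$\alpha$'' bookkeeping --- will here be harmless, so the whole argument can be run at a single scale with no logarithmic losses.

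First I would dispose of the ``$k$'' term. If $S=\emptyset$ there is nothing to prove; otherwise fix any $x_0\in S$. Then $B$ contains the $k$-skeleton of an honest cube around $x_0$, which is a finite union of $k$-dimensional boxes and so has box counting dimension exactly $k$; since $\underline{\dim}_B$ and $\overline{\dim}_B$ are monotone, $\underline{\dim}_B(B),\overline{\dim}_B(B)\ge k$. From here I may assume $S$, and the portion of $B$ I actually use, are bounded (the box dimension of $S$ is witnessed on bounded pieces).

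The substance is a single estimate, valid at every small scale. Writing $N_\delta(X)$ for the number of $\delta$-mesh cubes meeting $X$ and $c=\frac kn+\frac{(n-k)(2n-1)}{2n^2}=1-\frac{n-k}{2n^2}$, I want to show that for each $\alpha<c$ there is $c_\alpha>0$ with
\[ N_\delta(B)\ \ge\ c_\alpha\,N_\delta(S)^{\alpha}\qquad\text{for all small }\delta. \]
The route is to split $S=S_{\ge\delta}\sqcup S_{<\delta}$ according to whether the cube guaranteed around the point has side $\ge\delta$ or $<\delta$, so $N_\delta(S)\le N_\delta(S_{\ge\delta})+N_\delta(S_{<\delta})$. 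For $S_{\ge\delta}$, passing to $\delta$-grid indices turns the configuration into a discrete one: the $\delta$-cells of $B$ meeting the $k$-skeleton of a cube of side $\ge\delta$ contain, after thickening $B$'s index set by a fixed $\di{-2,2}^n$ (only a bounded factor), the discrete $k$-skeleton of a discrete cube of radius $\ge1$ around the grid index of the center, which is exactly what Theorem~\ref{discbound} needs; it yields $N_\delta(B)\ge\Omega\big(N_\delta(S_{\ge\delta})^{\alpha}\big)$. For $S_{<\delta}$ I use only the crude bound: the $k$-skeleton of the small cube around a point of $S_{<\delta}$ is nonempty and lies in a bounded dilate of the cell containing that point, so it deposits a cell of $B$ in the $3^n$-block of that cell, and this assignment is $O(1)$-to-one, giving $N_\delta(B)\ge\Omega\big(N_\delta(S_{<\delta})\big)$. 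One of the two pieces has at least half the cells of $S$, and $t\ge t^{\alpha}$ for $t\ge1$, so combining the two bounds gives $N_\delta(B)\ge\Omega\big((\tfrac12 N_\delta(S))^{\alpha}\big)$, as desired.

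To finish, I take $\log(\cdot)/(-\log\delta)$ in the displayed inequality and pass to the $\limsup$ and to the $\liminf$ as $\delta\to0$; the constant washes out, so $\overline{\dim}_B(B)\ge\alpha\,\overline{\dim}_B(S)$ and $\underline{\dim}_B(B)\ge\alpha\,\underline{\dim}_B(S)$ for every $\alpha<c$, and letting $\alpha\uparrow c$ and combining with the ``$k$'' bound gives both asserted inequalities. I expect the only delicate point to be the bookkeeping in the $S_{\ge\delta}$ step --- matching a Euclidean $k$-skeleton to an honest discrete $k$-skeleton and tracking the $O(1)$ index shifts so Theorem~\ref{discbound} applies cleanly --- together with the easy but load-bearing realization that sub-$\delta$ cubes, far from obstructing anything, simply put a cell of $B$ on top of each cell of $S$, so that $N_\delta(B)\gtrsim N_\delta(S_{<\delta})$ alone already beats $N_\delta(S_{<\delta})^{\alpha}$.
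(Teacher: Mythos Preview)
Your proof is correct and follows the same route as the paper: discretize at each scale, invoke the discrete estimate Theorem~\ref{discbound}, and read off the dimension inequality from $\log N_\delta(\cdot)/(-\log\delta)$. You are in fact a bit more careful than the paper on two points---you split off the sub-$\delta$ cubes and dispose of them with the trivial bound $N_\delta(B)\gtrsim N_\delta(S_{<\delta})\ge N_\delta(S_{<\delta})^\alpha$, and you work with $\alpha<1-\tfrac{n-k}{2n^2}$ throughout and only let $\alpha\uparrow c$ at the end---whereas the paper simply rounds centres and radii to the dyadic grid in one stroke and absorbs both issues into the implicit constants.
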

\begin{proof}
Since $B$ contains a $k$-cube, $\overline{\dim}_B(B)\geq k$.

For $x\in \R^n$ and $r\in \R$, let $\widetilde x_m$ and $\tilde r$ be the centers of the half-open dyadic cube and interval of side length $2^{-m}$ containing $x$ and $r$ respectively. Define $\widetilde S_m:=\{\widetilde x_m: x\in S\}$. Without loss of generality, $B$ is the union of $k$-skeletons of cubes with centers in $S$. That is, $B=\bigcup_{x\in S} C(x, r(x))$, where $C(x,r(x))$ is the $k$-skeleton of the cube centered at $x$ with side length $2r(x)$ depending on $x$. Let $\widetilde B_m=\bigcup_{x\in S} C(\widetilde x_m, \widetilde{r(x)}_m)$. It is clear that $B$ meets $\Omega(|\widetilde B_m|)$ cubes in $2^{-m}\Z$, and from the discrete bound, Theorem \ref{discbound},
\[|\widetilde B_m|\geq\Omega(|\widetilde S_m|^{1-\frac{n-k}{2n^2}}).\]

So, from some $c_1,c_2$ depending only on $n,k$,
\begin{align*}\underline{\dim}_B(B)&\geq \liminf_m \frac{\log(|\widetilde B_m|)+c_1}{m}\\
\;&\geq \liminf_m \frac{\log(|\widetilde S_m|)\left(1-\frac{n-k}{2n^2}\right)+c_2}{m}\geq\underline{\dim}_B(S)\left(1-\frac{n-k}{2n^2}\right),\end{align*}
and similarly for $\overline{\dim_B}$.
\end{proof}

\begin{lem} \label{nlbox} If $A\subseteq\R^\ell$, $S\subseteq \R^n$, and 
\[\forall x\in S\;\exists r\in \R \;\forall I\in \setc n \ell\;\forall \sigma\in\{-1,1\}^\ell:\quad x_I +\sigma r\in A\]

then, 
\[\underline{\dim}_B(A)\geq \left(\frac{\ell(2n-1)}{2n^2}\right)\underline{\dim}_B(S)\]
and
\[\overline{\dim}_B(A)\geq \left(\frac{\ell(2n-1)}{2n^2}\right)\overline{\dim}_B(S)\]

\end{lem}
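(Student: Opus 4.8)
The plan is to mimic the proof of Theorem \ref{boxbound} almost verbatim, replacing the discrete bound (Theorem \ref{discbound}) with the $(n,\ell)$-dimensional lemma (Theorem \ref{nllem}). First I would discretize: for $x$ in $\R^\ell$ or $\R^n$, let $\widetilde x_m$ denote the center of the half-open dyadic cube of side length $2^{-m}$ containing $x$, and set $\widetilde S_m = \{\widetilde x_m : x\in S\}$ and $\widetilde A_m = \{\widetilde x_m : x \in A\}$. The box counting dimension of $S$ is then controlled by $\liminf_m \log|\widetilde S_m|/m$ (respectively $\limsup$ for $\overline{\dim}_B$), up to additive constants depending only on $n,\ell$, and similarly for $A$; this is just the standard reformulation of box dimension via dyadic grids.

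The key step is to check that the discretized sets still satisfy a hypothesis to which Theorem \ref{nllem} applies, modulo a bounded loss. Given $x\in S$ with witnessing radius $r=r(x)$, the point $x_I + \sigma r$ lies in $A$ for every $I\in\setc n \ell$ and $\sigma\in\{-1,1\}^\ell$, so its dyadic approximation $\widetilde{(x_I+\sigma r)}_m$ lies in $\widetilde A_m$. The issue is that $\widetilde{(x_I + \sigma r)}_m$ need not equal $(\widetilde x_m)_I + \sigma \widetilde r_m$ exactly — rounding and projection do not commute — but the two differ coordinatewise by $O(2^{-m})$, i.e. by a bounded number of grid cells. So after passing to the integer lattice (rescaling by $2^m$), the set $2^m\widetilde A_m$ together with a bounded neighborhood still contains, for each point of $2^m\widetilde S_m$, the vertices of an $\ell$-cube in each coordinate projection up to a translation error bounded by a constant $C = C(n,\ell)$. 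One then absorbs this error either by thickening $\widetilde A_m$ by the fixed constant $C$ (which changes $|\widetilde A_m|$ by at most a constant factor, hence does not affect the dimension), or by noting that the proof of the $n$-dimensional lemma (Lemma \ref{nlem}) is robust to replacing ``$x\pm rv_i\in B$'' by ``$x\pm rv_i$ is within $C$ of a point of $B$'': the base case $n=1$ still gives $|S|\le O(|B|^2)$ and the inductive decomposition into parallel lines and hyperplanes goes through with the constants absorbed into the $\Omega(\cdot)$. I expect this bookkeeping — verifying that the rounding error is genuinely $O(1)$ uniformly and that Theorem \ref{nllem} tolerates it — to be the only real obstacle; it is a routine but slightly fussy ``commuting rounding with projection'' argument.

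Granting this, Theorem \ref{nllem} (applied to the thickened discretized sets, then rescaled back) yields
\[
|\widetilde A_m| \geq \Omega\!\left(|\widetilde S_m|^{\ell(2n-1)/(2n^2)}\right),
\]
with the implied constant depending only on $n$ and $\ell$. Taking logarithms, dividing by $m$, and passing to $\liminf_m$ gives, for constants $c_1, c_2$ depending only on $n,\ell$,
\[
\underline{\dim}_B(A) \geq \liminf_m \frac{\log|\widetilde A_m| + c_1}{m} \geq \liminf_m \frac{\tfrac{\ell(2n-1)}{2n^2}\log|\widetilde S_m| + c_2}{m} = \frac{\ell(2n-1)}{2n^2}\,\underline{\dim}_B(S),
\]
and the identical computation with $\limsup$ in place of $\liminf$ gives the bound for $\overline{\dim}_B(A)$, completing the proof.
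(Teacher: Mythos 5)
Your proposal is correct and follows essentially the same approach as the paper: the paper's own proof of this lemma is simply the one-line remark that the argument of Theorem~\ref{boxbound} goes through verbatim with Theorem~\ref{nllem} in place of Theorem~\ref{discbound}, which is exactly what you do. One small imprecision: the worry you raise is not really that rounding fails to commute with \emph{projection} --- both are done coordinatewise, so $(\widetilde x_m)_I = \widetilde{(x_I)}_m$ exactly --- but rather that rounding fails to commute with the \emph{addition} of $\sigma r$, i.e.\ $\widetilde{(x_I+\sigma r)}_m \neq (\widetilde x_m)_I + \sigma\widetilde r_m$ in general. Your thickening fix handles this, but note that the paper's Theorem~\ref{boxbound} sidesteps it entirely by not discretizing $A$ directly: it first reduces WLOG to $A$ being the union of the relevant vertex sets, then defines $\widetilde A_m$ from the rounded \emph{centers and radii}, $\widetilde A_m=\{(\widetilde x_m)_I+\sigma\widetilde{r(x)}_m\}$, so that $\widetilde A_m$ and $\widetilde S_m$ satisfy the hypotheses of Theorem~\ref{nllem} on the nose, and then observes that $A$ meets $\Omega(|\widetilde A_m|)$ dyadic cubes since each point of $\widetilde A_m$ is within $O(2^{-m})$ of the genuine point $x_I+\sigma r(x)\in A$. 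That version needs no thickening and no robustness claim about Lemma~\ref{nlem}; your version is a touch more awkward but equally valid.
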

\begin{proof}
The proof is exactly as above, except appealing to the $(n,\ell)$-dimensional lemma, Lemma \ref{nllem} instead of Theorem \ref{discbound}.
\end{proof}

\begin{lem} \label{nldim}

If $A\subset \R^\ell$, $S\subset \R^n$, and
\[\forall x\in S\;\forall I\in \setc n \ell \;\exists r\in \R \;\forall \sigma\in\{-1,1\}^\ell:\quad x_I +\sigma r\in A\]
then
\[\dim_P(A)\geq \left(\frac{\ell(2n-1)}{2n^2}\right)\dim_P(S).\]

\end{lem}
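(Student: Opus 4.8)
The plan is to reduce the statement to its box‑counting counterpart, Lemma \ref{nlbox}, by way of the identification of packing dimension with the countably stabilised (modified) box‑counting dimension, Theorem \ref{dimequiv}. So I would aim to show that \emph{every} countable cover $A=\bigcup_{i\in\N}A_i$ satisfies $\sup_i\overline{\dim}_B(A_i)\geq\frac{\ell(2n-1)}{2n^2}\dim_P(S)$; taking the infimum over all such covers and applying Theorem \ref{dimequiv} then yields $\dim_P(A)=\overline{\dim}_{MB}(A)\geq\frac{\ell(2n-1)}{2n^2}\dim_P(S)$.

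For the reduction, fix a countable cover $\{A_i\}$ of $A$. First I would replace it by the increasing cover $A_i':=\bigcup_{j\leq i}A_j$, which has the same value of $\sup_i\overline{\dim}_B$ by finite stability of box dimension, so I may assume $A_i\subseteq A_{i+1}$. For $x\in S$ the hypothesis furnishes a radius $r(x)$ with $x_I+\sigma r(x)\in A$ for all $I\in\setc{n}{\ell}$ and all $\sigma\in\{-1,1\}^\ell$; these are only $\binom{n}{\ell}\,2^\ell$ points, so by monotonicity of the cover they all lie in a single $A_{i(x)}$. Setting $S_i:=\{x\in S:i(x)=i\}$, the family $\{S_i\}_{i\in\N}$ covers $S$, and for each fixed $i$ the pair $(A_i,S_i)$ satisfies the hypothesis of Lemma \ref{nlbox} (witnessed by the very radii $r(x)$), whence $\overline{\dim}_B(A_i)\geq\frac{\ell(2n-1)}{2n^2}\overline{\dim}_B(S_i)$. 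Since $\{S_i\}$ is one admissible decomposition of $S$, Theorem \ref{dimequiv} gives $\sup_i\overline{\dim}_B(S_i)\geq\dim_P(S)$, and combining the displayed bounds over $i$ produces $\sup_i\overline{\dim}_B(A_i)\geq\frac{\ell(2n-1)}{2n^2}\dim_P(S)$, which is what the reduction needed.

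I do not expect a serious analytic obstacle: once Lemma \ref{nlbox} and Theorem \ref{dimequiv} are in hand, the argument is the same bookkeeping used for the box‑counting estimates and already sketched in \cite{op}. The only point that needs care — and which I would state explicitly — is that the decomposition $S=\bigcup_i S_i$ is not intrinsic: it is read off from the chosen cover of $A$, and one must pass to an \emph{increasing} cover of $A$ so that all finitely many cofactor‑vertices attached to a given $x$ fall into one piece $A_{i(x)}$; without monotonicity the assignment $x\mapsto i(x)$ need not be well defined, and the reduction breaks.
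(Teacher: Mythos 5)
Your argument is correct and is essentially the paper's own proof: both reduce to Lemma \ref{nlbox} via the identification $\dim_P = \overline{\dim}_{MB}$ (Theorem \ref{dimequiv}), pass to an increasing cover $A=\bigcup_i A_i$, read off an induced cover $S=\bigcup_i S_i$ (the paper keeps this ascending, you take a partition via $i(x)$ --- an immaterial difference), apply Lemma \ref{nlbox} piecewise, and finish with the modified-box characterization on the $S$ side. One small remark: you silently read the hypothesis as $\exists r\;\forall I$, whereas the lemma as printed has $\forall I\;\exists r$; since Lemma \ref{nlbox} and the lemma's sole application (Theorem 4.4) both use the $\exists r\;\forall I$ form, your reading is clearly the intended one, and the paper's own proof makes the same tacit substitution.
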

\begin{proof}
We use the equivalence of modified box-counting dimension and packing dimension, Theorem \ref{dimequiv}, and the remarks following. Write $A$ as an ascending union $A=\bigcup_{i\in\N} A_i$. We want to show that $\lim_i \overline{\dim}_B(A_i)\geq\dim_P(S)\frac{\ell(2n-1)}{2n^2}$. Let $S_i$ be the set of centers of cubes with vertices whose projections are in $A_i$. By the box-counting bound in the previous lemma, $\overline{\dim}_B(A_i)\geq \frac{\ell(2n-1)}{2n^2}\overline{\dim}_B{S_i}$, whence

\[\lim_i \overline{\dim}_B(A_i)\geq \lim_i \overline{\dim}_B(S_i)\frac{\ell(2n-1)}{2n^2}.\]

Note that $S$ is the ascending union $\bigcup_{i\in\N} S_i$. So, $\lim_i \overline{\dim}_B(S_i)\geq\dim_P(S)$. This completes the proof.
\end{proof}

To get a packing dimension for sets $B$ contaning skeleta of cubes around points in some set $S$, we can take rational translates of the set $B$ to ensure it is in a product form, then apply the above continuous analogue of the the $(n,\ell)$-dimensional lemma to the factors. 

\begin{thm}[Theorem \ref{int:pbbounds} part 1 in the Introduction]
If $B,S\subseteq \R^n$ and $B$ contains the $k$-skeleton of an $n$-cube around every point in $S$, then
\[\dim_P(B)\geq \dim_P (S)\frac{(2n-1)(n-k)}{2n^2} +k.\]
\end{thm}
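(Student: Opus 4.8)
The plan is to reduce the packing-dimension statement for $B$ to the packing-dimension bound for projections (Lemma~\ref{nldim}) via a decomposition argument, mimicking the box-counting proof of Theorem~\ref{boxbound} but now using the countable-stability characterization of packing dimension from Theorem~\ref{dimequiv}. The obvious obstruction is that $B$ is a union of genuine $k$-skeleta, not a product set, so we cannot directly feed it into the projection lemma; the fix, as hinted in the paragraph before the theorem, is to pass to rational translates so that the relevant piece of $B$ splits as a product of a $k$-dimensional ``face'' factor and an $(n-k)$-dimensional ``cofactor'' factor, and then apply the product rule (Theorem~\ref{dimprod}) together with Lemma~\ref{nldim}.

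Concretely, first I would reduce to the case where $B$ is exactly $\bigcup_{x\in S} C(x,r(x))$, the union of $k$-skeleta of cubes centered at the points of $S$. Then I would write $S$ as an ascending countable union $S=\bigcup_i S_i$ realizing $\dim_P(S)=\lim_i \overline{\dim}_B(S_i)$ by Theorem~\ref{dimequiv}; for each such $S_i$ and each radius-scale I would further split to get, after an appropriate rational translation, a subset of $B$ of product form $F\times A$ where $F\subseteq\R^k$ records a single axis-spanned $k$-face direction $I\in\setc{n}{k}$ and $A\subseteq\R^{n-k}$ collects the ``cofactor'' coordinates $x_{I^c}\pm r$ of the cubes — precisely the configuration to which Lemma~\ref{nldim} applies with $\ell=n-k$. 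The key inequalities are then: $\dim_P(B)\geq \dim_P(F\times A)\geq \dim_P(A)$ (using $\dim_P(F)\geq 0$ and the subadditivity/monotonicity in Theorem~\ref{dimprod}), then $\dim_P(A)\geq \frac{(n-k)(2n-1)}{2n^2}\dim_P(S')$ from Lemma~\ref{nldim} applied to the subset $S'$ of $S$ whose cubes contributed, and finally combining over the ascending union to recover $\dim_P(S)$ on the right. The additive $+k$ term comes from instead choosing the product piece so that the $k$-face factor $F$ itself has packing dimension $k$ (e.g. by arranging a full $k$-dimensional sub-skeleton inside $B$) and using $\dim_P(F\times A)\geq \dim_P(F)+\dim_H(A)$, or alternatively by a direct accounting that keeps both the $k$-skeleton's ambient $k$ dimensions and the cofactor's contribution.

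The main obstacle I anticipate is bookkeeping the translations and the choice of which face direction $I\in\setc{n}{k}$ to use so that the product decomposition is valid \emph{simultaneously} with controlling how much of $S$ survives: when we translate $B$ by a rational vector to force product form we may only capture a definite fraction (depending on $n,k$ and the scale) of the skeleta, and we must check that this fraction is enough that the associated center set $S'$ still has box-counting dimension matching $\overline{\dim}_B(S_i)$ in the limit. This is the same subtlety that appears in \cite{op}; it is handled by noting that only finitely many rational translates (at each dyadic scale) are needed to cover everything, and box-counting dimension is finitely stable, so some translate carries the full dimension. Once that is set up, the remaining steps — invoking Lemma~\ref{nldim}, the product rule, and passing to the ascending-union limit — are routine, and I would not grind through the constants.
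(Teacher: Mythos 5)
Your proposal has the right ingredients --- a rational translation to force product structure, the product rule, and Lemma~\ref{nldim} applied to the cofactor set --- but several of the concrete details are off in ways that the paper's actual argument is specifically designed to avoid, and as written the plan would not go through.

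The central simplification you are missing is that the paper sets $B'=B+\Q^n$, a \emph{countable} union of translates, and then invokes the fact that packing dimension (unlike box-counting dimension) is \emph{countably} stable, so $\dim_P(B')=\dim_P(B)$. After this single move every $k$-face of a cube in $B$ lies inside an entire $k$-plane of $B'$, and one reads off $B'\supseteq\bigcup_{\pi\in\Sigma_n}\pi(A_\pi\times\R^k)$. Your worries about "capturing only a definite fraction of the skeleta," about choosing a single face direction $I$, and about needing finitely many translates at each dyadic scale so that box dimension is preserved are all holdovers from box-counting reasoning and simply do not arise: the $\Q^n$-translation captures everything at no cost in packing dimension, and by taking the union $A=\bigcup_{\pi}A_\pi$ over all face directions at once you never have to choose a single $I$ or argue that a fraction of $S$ survives.

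Two further points. You propose decomposing $S$ via Theorem~\ref{dimequiv} to realize $\dim_P(S)=\lim_i\overline{\dim}_B(S_i)$ and then summing contributions; but the $\inf$ in that characterization only gives arbitrarily close upper approximations, not a lower-bound tool, and in any case the decomposition argument is already packaged inside the proof of Lemma~\ref{nldim} (where it is $A$, not $S$, that gets written as an ascending union). At the level of this theorem you should invoke Lemma~\ref{nldim} as a black box on $A$ and the full $S$, with no decomposition of your own. Finally, the product inequality you wrote, $\dim_P(F\times A)\geq\dim_P(F)+\dim_H(A)$, puts the packing dimension on the wrong factor: you would then need control of $\dim_H(A)$, which Lemma~\ref{nldim} does not give. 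The paper instead uses $\dim_P(A\times\R^k)\geq\dim_P(A)+\dim_H(\R^k)=\dim_P(A)+k$, so that the $\dim_P(A)$ estimate from Lemma~\ref{nldim} feeds in directly. Once these are repaired your route coincides with the paper's, but the repairs are exactly the substance of the proof.
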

\begin{proof}
First, let $B'=B+\Q^n$. It is clear (from Theorem $3.1$, for instance) that packing dimension is countably stable. Since $B'$ is a countable union of translates of $B$, it then follows that $\dim_P(B')=\dim_P(B)$. 

If $P$ is a $k$-face in $B$, then there is a  unique $I\in\setc{n}{n-k}$ such that $P_I$ is a singleton, and there is a $k$-plane in $B'$ containing $P$. So, for some collection of sets $A_{\pi}\subseteq \R^{n-k}$, where $\pi\in \Sigma_n$,

\[B'\supseteq\bigcup_{\pi\in \Sigma_n} \pi\left( A_{\pi}\times\R^{k}\right),\]
where every $P_I$ is contained in some $A_\pi$ when $P$ is a $k$-face of a cube in $B$ and $P_I$ is a singleton, and where $\Sigma_n$ acts on $\R^n$ by $\pi(x_1,...,x_n)=(x_{\pi(1)},...,x_{\pi(n)})$. By the product rule, Theorem \ref{dimprod} part (2), and the fact that $\dim_H(\R)=\dim_P(\R)=1$, we have


\begin{equation}\label{b'bound}\dim_P(B')\geq \max_{\pi\in \Sigma_n} \dim_P (A_{\pi})+k=\dim_P\left(\bigcup_{\pi\in \Sigma_n} A_{\pi}\right)+k.\end{equation}

Note that, if $x\in S$, there is some $r\in\R^+$ such that there is a $k$-face of a cube in $B$ at distance $r$ from $x$ in every direction. That is, for $I\in\setc{n}{n-k}$, $x_I$ is the center of a cube in $\bigcup_{\pi\in \Sigma_n}A_{\pi}$. This means $A=\bigcup_{\pi\in \Sigma_n} A_{\pi}$ and $S$ satisfy the conditions of the analytic analogue $(n,n-k)$-dimensional lemma, Lemma \ref{nldim}. We then have 
\[\dim_P(A)\geq \frac{(n-k)(2n-1)}{2n^2}\dim_P(S).\] 

Combining this with (\ref{b'bound}), we get
\[\dim_P(B)=\dim_P(B')\geq \dim_P(A)+k\geq \frac{(n-k)(2n-1)}{2n^2}\dim_P(S)+k.\]

\end{proof}

\section{Packing and Vertex Constructions}\label{sec:packcon}

The constructions for packing dimension and the vertex case of box-counting dimension are completely analogous to those in \cite{op}. The key is a lemma generalizing the construction of the Cantor set.

\begin{lem}[ \cite{op} Lemma 6.1 ] 
Let $\{Q_i\}_{i\in\N}$ be a sequence of finite sets in $\R$ such that $\mathrm{diam}Q_i\leq d_i$, $Q_i$ is $\delta_i$-separated (if $x,y$ are in $Q_i$ and distinct, then $|x-y|>\delta_i$), $|Q_i|=\ell_i$, $\sum_{i\in\N} \min Q_i>-\infty$ and $\sum_{i\in\N} \max Q_i<\infty$. Let
\[P=\sum_{i\in\N} Q_i=\left\{\sum_{i\in\N}q_i:q_i\in Q_i \right\}.\]

\begin{enumerate}
\item If there is some $c<1$ such that $d_i\le cd_{i-1}$ for every $i\in \N$, then
\[\overline{\dim}_B P\leq \limsup_{j\rightarrow\infty} \frac{\log(\ell_1,...,\ell_j)}{-\log(d_j)} .\]
\item If $d_i+\delta_i\leq \delta_{i-1}$ then
\[\dim_H(P)\geq \liminf_{j\rightarrow\infty} \frac{\log(\ell_1,...,\ell_j)}{-\log(d_{j+1}\ell_{j+1})}.\]
\end{enumerate}
\end{lem}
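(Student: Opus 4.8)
The plan is to prove this as a standard ``Cantor-set with variable ratios'' estimate, treating the two parts essentially independently since they rely on different hypotheses ($d_i \le c d_{i-1}$ for the box-counting upper bound, $d_i + \delta_i \le \delta_{i-1}$ for the Hausdorff lower bound). The set $P = \sum_{i\in\N} Q_i$ is well-defined and bounded because of the summability hypotheses on $\min Q_i$ and $\max Q_i$; it is useful to introduce the ``level-$j$ partial sums'' $P_j := \sum_{i=1}^j Q_i$, a finite set of size at most $\ell_1 \cdots \ell_j$, together with the tails $T_j := \sum_{i>j} Q_i$, whose diameter is controlled by $\sum_{i>j}(\max Q_i - \min Q_i) \le \sum_{i>j} d_i$.

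For part (1), first I would observe that under $d_i \le c d_{i-1}$ with $c<1$ the tail diameter $\mathrm{diam}(T_j) \le \sum_{i>j} d_i \le \frac{c}{1-c} d_j = O(d_j)$. Hence $P \subseteq \bigcup_{p \in P_j}\bigl(p + [\,-O(d_j), O(d_j)\,]\bigr)$, so $P$ can be covered by $O(\ell_1\cdots\ell_j)$ intervals of length comparable to $d_j$. Feeding this into the definition of upper box dimension along the subsequence of scales $\sim d_j$ gives $\overline{\dim}_B P \le \limsup_j \frac{\log(\ell_1\cdots\ell_j)}{-\log d_j}$; the one technical point is to interpolate to all scales $\delta \to 0$, which is exactly where the geometric decay $d_j \le c d_{j-1}$ is needed (consecutive scales $d_j$ are comparable up to the fixed factor $c$, so the $\log N(\delta)/\log(1/\delta)$ ratio at an intermediate scale is squeezed between nearby terms of the displayed sequence plus a vanishing error). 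I would cite \cite{op} Lemma 6.1 here since the statement is quoted verbatim from there, but I would still spell out the covering bound for completeness.

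For part (2), the goal is a Hausdorff lower bound, so I would build a mass distribution on $P$ and apply the mass distribution principle (Falconer). Put the natural measure $\mu$ on $P$: at level $j$ it distributes mass $(\ell_1\cdots\ell_j)^{-1}$ to each of the $\ell_1\cdots\ell_j$ ``cylinders'' $p + T_j$, $p \in P_j$. The separation hypothesis $d_i + \delta_i \le \delta_{i-1}$ is what guarantees these cylinders are genuinely disjoint and in fact $\delta_j$-separated at level $j$, and more importantly that a cylinder of generation $j+1$ has diameter $\le \mathrm{diam}(T_{j+1}) \le \sum_{i>j+1} d_i$, while a ball of radius $r$ with $d_{j+1}\ell_{j+1} \lesssim r$ can meet only boundedly many generation-$(j+1)$ cylinders (because consecutive points of $Q_{j+1}$ are $\delta_{j+1}$-apart and the cylinders sit inside $d_{j+1}$-blocks). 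Then for $r$ in the range roughly $[\,d_{j+1}\ell_{j+1},\, d_j\ell_j\,]$ one gets $\mu(B(x,r)) \le O\bigl((\ell_1\cdots\ell_{j+1})^{-1}\bigr) = O\bigl(r^{t}\bigr)$ with $t = \frac{\log(\ell_1\cdots\ell_j)}{-\log(d_{j+1}\ell_{j+1})}$ up to lower-order corrections, and taking $\liminf_j$ over these windows and invoking the mass distribution principle yields $\dim_H P \ge \liminf_j \frac{\log(\ell_1\cdots\ell_j)}{-\log(d_{j+1}\ell_{j+1})}$.

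The main obstacle, and the part I would be most careful with, is the scale-interpolation/counting bookkeeping in both parts: in (1) passing from the discrete sequence of ``good'' scales $d_j$ to arbitrary $\delta$, and in (2) matching an arbitrary ball radius $r$ to the right generation $j$ and bounding how many generation-$j$ cylinders a ball of that radius can hit. Both are routine but error-prone, and both are precisely where each of the two hypotheses ($d_i\le cd_{i-1}$ versus $d_i+\delta_i\le\delta_{i-1}$) is used in an essential way; everything else is the soft machinery of covers and the mass distribution principle. Since the statement is lifted directly from \cite{op}, I would keep the write-up brief and refer there for the details that are identical.
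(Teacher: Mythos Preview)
The paper does not give its own proof of this lemma: it is stated with the label ``\cite{op} Lemma 6.1'' and then used immediately, with no argument supplied. Your sketch is the standard Moran--Cantor covering/mass-distribution argument (and indeed is what \cite{op} does), so your plan to cite \cite{op} and only spell out the covering bound matches the paper's treatment exactly.
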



\begin{thm}[Vertex Constructions] \label{vtxcon}
For any positive integer $n$ and any $t\in [0,1]$ there are compact sets $A,T\subseteq \R$ such that 
\begin{equation}\label{vtxdim} \dim_H(T)=\dim_B(T)=\dim_P(T)=t\end{equation}
and for all $x_1,x_2,...,x_n\in T$, there is an $r\in\R^+$ such that
\begin{equation}\label{vtxcond}x_1\pm r, x_2\pm r,...,x_n\pm r\in A\end{equation}
and
\begin{equation}\label{vtxcenterdim} \dim_P(A)=\dim_B(A)=\dim_H(A)=\dfrac{2n-1}{2n}t.\end{equation}
\end{thm}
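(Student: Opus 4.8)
The plan is to construct $A$ and $T$ as sums of rescaled finite digit sets, modeled on the Digit Construction Lemma \ref{digit}, and then read off the dimension bounds from the Cantor-type Lemma (the generalization of \cite{op} Lemma 6.1 quoted above). Concretely, I would fix a rapidly growing sequence of integers $i_1 < i_2 < \dots$ (say $i_{j+1}$ at least the square of the product of everything that came before, to make the $d_i$ ratios go to zero), and set $T = \sum_j 2^{-n_j}\,Q_j$ where $Q_j$ is essentially $\di{1, i_j^{2n}-1}$ rescaled to fit in a dyadic block, and $A = \sum_j 2^{-n_j}\, D_{i_j,n}$ where $D_{i_j,n}$ is the digit set from Lemma \ref{digit}. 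The scaling exponents $n_j$ are chosen so that the blocks are separated (for the $\dim_H$ lower bound via part (2) of the Cantor lemma) and so that $d_{j} \le c\, d_{j-1}$ (for the $\overline{\dim}_B$ upper bound via part (1)). The point of using $D_{i_j,n}$ is precisely Lemma \ref{digit}: for any choice $x_1,\dots,x_n \in T$, writing each $x_p = \sum_j 2^{-n_j} q_{p,j}$ with $q_{p,j}\in Q_j$, the lemma produces for each $j$ a radius $r_j$ with $q_{p,j} \pm r_j \in D_{i_j,n}$ for all $p$; setting $r = \sum_j 2^{-n_j} r_j$ (which converges since $|r_j|$ is bounded by the diameter of $Q_j$) gives $x_p \pm r \in A$, establishing \eqref{vtxcond}. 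One must check $r > 0$, which follows because $r_j$ is nonzero for every $j$ (as guaranteed by Lemma \ref{digit}, after the permutation argument there) and the leading term dominates; a little care is needed here but it is routine.

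Next I would compute the dimensions. Since $|Q_j| = i_j^{2n}-1 \approx i_j^{2n}$ and $|D_{i_j,n}| = O(i_j^{2n-1})$, and the diameters of the corresponding rescaled sets are comparable (both live in a dyadic block of size $\approx 2^{-n_j}$, with $Q_j$ of diameter $\Theta(i_j^{2n}\cdot 2^{-n_j}\cdot(\text{unit spacing}))$ — here I need to pick $n_j$ so that the unit spacing inside block $j$ is $2^{-n_{j+1}}$-ish, i.e.\ the blocks tile without gaps up to the separation requirement), the ratio $\log(\ell_1\cdots\ell_j)/(-\log d_j)$ will, by the rapid growth of $i_j$, be dominated by the last factor, giving $\overline{\dim}_B(T) \le t$ and $\overline{\dim}_B(A) \le \frac{2n-1}{2n}t$ from part (1) of the Cantor lemma, because $\log|D_{i,n}| / \log|Q_j|$-type ratios tend to $\frac{2n-1}{2n}$. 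The matching lower bounds $\dim_H(T) \ge t$, $\dim_H(A) \ge \frac{2n-1}{2n}t$ come from part (2) of the Cantor lemma once the separation condition $d_i + \delta_i \le \delta_{i-1}$ is arranged. Combined with the general inequalities $\dim_H \le \dim_P$ and $\underline{\dim}_B \le \overline{\dim}_B$ from Theorem \ref{dimineq}, plus $\dim_H \le \underline{\dim}_B$, all three dimensions are pinched to the claimed common value, giving \eqref{vtxdim} and \eqref{vtxcenterdim}. For $A$ I also need $\dim_P(A) = \frac{2n-1}{2n}t$: the upper bound $\dim_P(A)\le\overline{\dim}_B(A)$ is immediate, and the lower bound follows from $\dim_H(A) \le \dim_P(A)$. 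To hit the value $t=0$ or endpoint cases, and to realize arbitrary $t\in[0,1]$ rather than just a dense set, one tunes how fast $n_j$ grows relative to $i_j$ — effectively interpolating the exponent — which is the standard Cantor-set dimension-tuning trick.

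The main obstacle I anticipate is bookkeeping the three interlocking sequences $i_j$, $\delta_j$ (separation), $d_j$ (diameter), hence the exponents $n_j$, so that simultaneously: (a) $d_j/d_{j-1}\to 0$ fast enough that the $\limsup$ in part (1) is actually a genuine limit equal to $t$ (resp.\ $\frac{2n-1}{2n}t$) and not something larger coming from an unlucky subsequence; (b) $d_j + \delta_j \le \delta_{j-1}$ so part (2) applies; and (c) the $\liminf$ in part (2) matches the $\limsup$ in part (1). Because $|D_{i,n}|$ is only $\Theta(i^{2n-1})$ while $|Q_j| = \Theta(i^{2n})$, the ratio $\log\ell_1^A\cdots\ell_j^A \big/ \log\ell_1^T\cdots\ell_j^T$ is not exactly $\frac{2n-1}{2n}$ for finite $j$ — it carries lower-order corrections — so one must verify these wash out in the limit, which forces $i_j$ to grow super-exponentially in $j$. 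A secondary subtlety is that $D_{i,n}\subseteq \di{-(2i)^{2n},(2i)^{2n}}$ is a factor $\approx (2i)^{2n}$ wider than $\di{1,i^{2n}-1}$, so the rescaled $A$-blocks are wider than the $T$-blocks by a bounded-exponent factor; this only changes $d_j^A$ versus $d_j^T$ by a multiplicative constant times a power of $i_j$ that is negligible against $i_j^{2n}$ in the logarithmic ratios, but it must be tracked. None of this is deep — it is exactly the argument of \cite{op} Section 6 with $n=2$ replaced by general $n$ and their planar square-digit construction replaced by Lemma \ref{digit} — but getting the constants consistent is where the real work lies.
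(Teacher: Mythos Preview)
Your overall architecture is exactly the paper's: build $T$ and $A$ as $\sum_j (\text{scale})_j\cdot\di{1,i_j^{2n}-1}$ and $\sum_j (\text{scale})_j\cdot D_{i_j,n}$, verify \eqref{vtxcond} term-by-term via Lemma~\ref{digit}, and pin down all three dimensions using the two parts of the Cantor-type lemma together with Theorem~\ref{dimineq}. So the plan is correct.

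However, your parameter choice contains a genuine error. You propose taking $i_{j+1}$ at least the square of the product of the previous $i_m$'s, and later assert that making the lower-order corrections wash out ``forces $i_j$ to grow super-exponentially in $j$.'' This is backwards. With such growth one has $\log\ell_{j+1}\ge 2\log(\ell_1\cdots\ell_j)$, and since any scaling that makes the part~(1) $\limsup$ equal $t$ forces $-\log d_{j+1}\approx \tfrac{1}{t}\log(\ell_1\cdots\ell_{j+1})$, the part~(2) ratio
\[
\frac{\log(\ell_1\cdots\ell_j)}{-\log(d_{j+1}\ell_{j+1})}
\;=\;\frac{\log(\ell_1\cdots\ell_j)}{\tfrac{1}{t}\log(\ell_1\cdots\ell_j)+\bigl(\tfrac{1}{t}-1\bigr)\log\ell_{j+1}}
\]
is bounded above by $\tfrac{t}{3-2t}<t$ for every $t\in(0,1)$. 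So the Cantor lemma gives $\dim_H(T)$ strictly below $t$, and indeed the set you would build genuinely has $\dim_H(T)<\overline{\dim}_B(T)$: your own condition (c) fails. The ``last term dominates'' heuristic is fine for the $\limsup$ but is precisely what kills the $\liminf$.

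The paper avoids this by taking \emph{every} $i\in\N$ (no subsequence) and putting all the tuning into the scale factor: $\beta_i=((i-1)!)^{-c/t}$, $T_i=\tfrac{\beta_i}{i^{2n}}\di{1,i^{2n}-1}$, $A_i=\tfrac{\beta_i}{i^{2n}}D_{i,n}$. Then $\log(\ell_1\cdots\ell_j)=2n\log(j!)$ and $-\log d_j\asymp -\log\beta_j=\tfrac{c}{t}\log((j-1)!)$, so both the part~(1) $\limsup$ and the part~(2) $\liminf$ reduce to constants times $\log(j!)/\log((j-1)!)\to 1$, and you can read off that all three dimensions of $T$ coincide and equal $t$ (and similarly for $A$ with $\ell_i\asymp i^{2n-1}$). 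The lower-order corrections you worried about wash out automatically because $\log i/\log(i!)\to 0$; no rapid growth is needed, and in fact it must be avoided.
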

\begin{proof}
Let $\beta_i=((i-1)!)^{-\frac{2n^2}{t}}$, $A_i=\frac{\beta_i}{i^{2n}}D_{i,n}$, and $T_i=\frac{\beta_i}{i^{2n}}\di{1,i^{2n}-1}$, where $D_{i,n}$ is as in Lemma \ref{digit}. Note that $A=\sum_{i\in \N} A_i$ and $T=\sum_{i\in\N} T_i$ satisfy (\ref{vtxcond}). It remains to verify the dimension conditions (\ref{vtxdim}) and (\ref{vtxcenterdim}). 

One inequality follows from Lemma \ref{nlbox} (for box-counting dimension) and Lemma \ref{nldim} (for packing dimension) with $k=0, \ell=n$ and $S=T^n$, and the product rule, Theorem \ref{dimprod}. The other inequality follows from the dimension inequalities (Theorem \ref{dimineq}) and the fact that the $A_i$ and $T_i$ satisfy the hypotheses of the previous lemma with
$\delta_i=\frac{\beta_i}{i^{2n}},\; d_i=\frac{\beta_i}{i^{2n}}(i^{2n}-1) ,\; \ell_i= i^{2n}$ for the $T_i$ and $\delta=\frac{\beta_i}{i^{2n}},\; d_i=3\frac{\beta_i}{i^{2n}}(i^{2n}-1) ,\; \ell_i= |D_k|\leq O(i^{2n-1})$ for the $A_i$ (one can check this).

\end{proof}

We prove a slight generalization of of Theorem \ref{int:pbcon} Part 1 from the introduction. This will mostly be subsumed by the box-counting construction, except in the case where $s=n$.

\begin{thm}[Packing and Weak Box-counting Construction] \label{pbcon}
For every $n,k$ with $0 \leq k< n$ and every $s\in [0,n]$, there are compact sets $B, S\subseteq \R^n$ where $B$ contains the $k$-skeleton of an $n$-cube around every point in $S$, and
\[\dim(S)=s\quad\mbox{and}\quad \dim(B)= k+ \dim(S)\dfrac{(n-k)(2n-1)}{2n^2}\]
where $\dim$ is either packing or box-counting dimension.
\end{thm}
\begin{proof}
First, we want an $S$ of the appropriate dimension and an $A$ with
\begin{equation}\label{packconstcond}\forall x_1,...,x_n\in S\; \exists r\in\R^+ \;\forall I\in \setc{n}{n-k} \;\forall i\in I :\quad x_i\pm r\in A.\end{equation}
We will get $B$ by taking a power of $A$ and interleaving copies of $\R$. The condition (\ref{packconstcond}) is clearly satisfied if there is some $r$ such that $x_i\pm r\in A$ for every $1\leq i\leq n$.

Apply the vertex construction, Lemma \ref{vtxcon}, to get a compact $T\subseteq [0,1]$ of dimension $\dfrac{s}{n}$ and compact $A\subseteq \R$ of dimension $\frac{2n-1}{2n^2}s$ satisfying
\[\forall x_1,...,x_n\in T \;\exists r\in\R^+ \;\forall 1\leq i \leq n: \quad x_i\pm r\in A\]
  Take $S=T^n$ and $B=\bigcup_{I\in \setc{n}{n-k}} \prod_{i=1}^n A_{I,i}$ where

 \[A_{I,i}=\left\{\begin{array}{ll} A &  \mathrm{if } i\in I \\ \R & \textrm{otherwise}\end{array}\right..\] 
Then,
  \[\dim(S)=\dim(T)n=s\]
and
\[\quad \dim(B)=k+\dim(A)(n-k)=k+\dfrac{(n-k)(2n-1)}{2n^2}s.\]

For any $x\in S$, there is some $r\in\R^+$ such that, for any $I\in \setc{n}{n-k}$ and $\sigma\in \{-1,1\}^n$, $x_I+r\sigma_I\in A^k$, so $x+r\sigma\in \prod_{i=1}^n A_{I,i}\subseteq B$. So, $B$  contains the $k$-skeleton of an $n$-cube around each point in $S$.
\end{proof}


Note that when $s=n$ the bounds for box-counting and packing dimension are equal, and the $S$ constructed above is exactly the unit cube.

\section{Box-counting Constructions}\label{sec:boxcon}

The box-counting construction is again completely analogous to the construction in \cite[Section $6.2$]{op}, however there are many more details to keep track of. For completeness we provide the entire argument below:

\begin{thm}[Theorem \ref{int:pbcon} part 2 in the Introduction] \label{boxcon}
For every $n,k$ with $0\leq k< n$ and every $s\in [0,n]$, there are compact sets $B, S\subseteq \R^n$ where $B$ contains the $k$-skeleton of an $n$-cube around every point in $S$, and
\[\underline{\dim}_B(S)=s\quad\mbox{and}\quad \overline{\dim}_B(B)= \max\left\{k,\left(1-\frac{(n-k)}{2n^2}\right)s\right\}\]

\end{thm}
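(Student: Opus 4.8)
The plan is to build $S$ and $B$ as self-similar-type Cantor constructions using the digit lemma, Lemma \ref{digit}, at a single scale parameter $i$ taken large, and then iterate across scales so that box counting dimension is forced to equal the target value rather than merely bounded. The two regimes are governed by which term of $\max\{k,(1-\frac{n-k}{2n^2})s\}$ dominates. When $(1-\frac{n-k}{2n^2})s\ge k$ the construction should mirror the packing-dimension construction of Section \ref{sec:packcon}: take $T\subseteq[0,1]$ of box dimension $s/n$ and $A\subseteq\R$ of box dimension $\frac{2n-1}{2n}\cdot\frac{s}{n}=\frac{(2n-1)s}{2n^2}$ produced by the vertex construction, Theorem \ref{vtxcon} (which gives simultaneous control of box, packing, and Hausdorff dimension), set $S=T^n$ and $B=\bigcup_{I\in\setc{n}{n-k}}\prod_i A_{I,i}$ with $A_{I,i}=A$ if $i\in I$ and $A_{I,i}=\R$ otherwise. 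The lower bound $\overline{\dim}_B(B)\ge(1-\frac{n-k}{2n^2})s$ then comes directly from the box counting bound, Theorem \ref{boxbound}, applied with $\underline{\dim}_B(S)=s$, while the upper bound comes from the product rule, Theorem \ref{dimprod} part (3), together with $\overline{\dim}_B(A^{n-k}\times\R^k)\le(n-k)\cdot\frac{(2n-1)s}{2n^2}+k$. The only subtlety here versus the packing case is that $B$ is an uncountable union over $\Sigma_n$-translated product sets, but it is a \emph{finite} union, so finite stability of box dimension applies. When instead $k>(1-\frac{n-k}{2n^2})s$, we simply enlarge the factors: keep $S$ as above but thicken $A$ so that $\dim_B(A)=k/(n-k)\cdot$(appropriate value); concretely we want $\overline{\dim}_B(B)=k$ exactly, and since $B$ already contains a full $k$-dimensional face $\overline{\dim}_B(B)\ge k$, it suffices to make the $A$-factors small enough (box dimension $\le \frac{k-s\cdot(\text{stuff})}{n-k}$, or just take $A$ finite, i.e. dimension $0$, if $s$ is small) so the product rule gives $\overline{\dim}_B(B)\le k$; here $\underline{\dim}_B(S)=s$ is still arranged by choosing $T$ of lower box dimension $s/n$, which the digit construction supplies.

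The genuine work, and the reason the excerpt promises "many more details," is not the upper bound but pinning down $\underline{\dim}_B(S)=s$ \emph{and} $\overline{\dim}_B(B)$ to the exact claimed value simultaneously across \emph{all} $s\in[0,n]$, including the interpolation between the two regimes and the non-integer values of $s/n$. For this I would run the Cantor-type construction of the cited Lemma (the $\sum_i Q_i$ machinery) with a carefully chosen rapidly-growing sequence $i_1<i_2<\cdots$ of scale parameters and gap sequence $\beta_j$, exactly as in the proof of Theorem \ref{vtxcon} but now only requiring that the limsup/liminf of $\frac{\log(\ell_1\cdots\ell_j)}{-\log d_j}$ hit the prescribed numbers; the asymmetry between $\underline{\dim}_B(S)$ being a liminf and $\overline{\dim}_B(B)$ being a limsup is what forces the two-sided bookkeeping and is where \cite{op} Section 6.2 spends its effort. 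I would first reduce to the one-dimensional building blocks $T,A\subseteq\R$, prove the block-level statement (there exist $T,A$ with $\underline{\dim}_B T=t$, $\overline{\dim}_B A=\frac{2n-1}{2n}t$, and the vertex condition \eqref{vtxcond}), then assemble $S,B$ by the product/interleaving recipe and invoke Theorems \ref{boxbound}, \ref{dimprod}, \ref{dimineq}.

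The main obstacle I anticipate is the lower bound $\overline{\dim}_B(B)\ge(1-\frac{n-k}{2n^2})s$ in the \emph{mixed} scales needed to realize a prescribed \emph{lower} box dimension for $S$: Theorem \ref{boxbound} hands us $\underline{\dim}_B(B)\ge(1-\frac{n-k}{2n^2})\underline{\dim}_B(S)$, which is a statement about $\underline{\dim}_B(B)$, whereas we want to control $\overline{\dim}_B(B)$ from below by $(1-\frac{n-k}{2n^2})s$ while keeping $\overline{\dim}_B(B)$ from exceeding that value. Reconciling these means choosing the scale sequence so that the "bad" scales for $S$'s lower box dimension are exactly the scales where $B$'s covering numbers are correspondingly small, i.e. the fluctuations must be synchronized between the $S$-side and the $B$-side. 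This is a concrete but delicate estimate on the covering numbers $N(B,2^{-m})$ at scales $m$ interpolating between the construction's natural scales $-\log d_j$, and it is precisely the technical heart that the theorem statement flags; everything else is an application of the tools already assembled in Sections \ref{sec:disc}--\ref{sec:estimates}.
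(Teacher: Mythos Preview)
There is a genuine gap for $k>0$ and $s<n$: your product construction is precisely the packing-dimension construction of Section~\ref{sec:packcon}, and it gives the wrong upper bound. The product rule on $A^{n-k}\times\R^k$ yields only
\[
\overline{\dim}_B(B)\le (n-k)\cdot\frac{(2n-1)s}{2n^2}+k = k+\frac{(n-k)(2n-1)}{2n^2}s,
\]
which is the packing formula, not the box-counting target $\bigl(1-\frac{n-k}{2n^2}\bigr)s=\frac{ks}{n}+\frac{(n-k)(2n-1)}{2n^2}s$. These differ by $k(1-s/n)>0$. Worse, since $A^{n-k}\times[0,1]^k$ genuinely has box dimension $(n-k)\dim_B A+k$ when $\dim_B A$ exists, your $B$ \emph{attains} this larger value; no scale-sequence tuning or liminf/limsup synchronization can repair this, because the full line factor forces a contribution of $k$ from the free coordinates regardless of $s$, whereas the target demands only $ks/n$. (For $k=0$ the two formulas agree, which is exactly the case the paper handles by the product method in Section~\ref{sec:packcon}.)

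The paper's construction is architecturally different. It abandons the global product form and takes $S=\{0\}\cup\bigcup_i\bigl((2^{-i},0,\dots,0)+\varepsilon_iS_i\bigr)$ as a union of shrinking translated discrete grids $S_i=\di{1,N_i-1}^n$, with $N_i\approx 2^{\alpha i}$ and $\varepsilon_i=2^{-(1+\alpha)i}$; the companion $B$ is the matching union in which the $k$-face factor at level $i$ is the \emph{bounded} interval $[-3N_i,4N_i]$ rather than all of $\R$. Each block $\varepsilon_iB_i$ then has diameter $O(2^{-i})$, so the $k$-face contribution to the $\varepsilon_i$-covering count is $(\varepsilon_jN_j/\varepsilon_i)^k$ rather than infinite. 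The technical core is a preliminary lemma giving uniform covering estimates for the one-dimensional building blocks $A_N$ at \emph{every} intermediate scale $R\in[1,N]$, followed by a decomposition of $B$ at each scale into blocks coarser than, comparable to, and finer than $\varepsilon_i$. Your diagnosis of the difficulty as liminf/limsup bookkeeping misses the point: the real obstacle is that any set containing full $k$-dimensional affine pieces has box dimension at least $k$ plus the transverse contribution, so the $k$-faces must be clipped to the scale of the surrounding cube.
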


The set $A$ will be stiched together from analogues of the $D_{i,n}$ in Lemma 2.1. We first need to get better control of how difficult our $D_{i,n}$ analogues are to cover with intervals of a given length.

\begin{lem} For any $n$, there is a sequence of sets $\{A_N\}_{N\in\N}$ of natural numbers such that the following hold:
\begin{enumerate}
\item For every $N$ and every $x_1,...,x_n \in \di{1,N}$ there is $r\in \di{1,3N}$ such that for all $1\leq i\leq n, \;\;x_i\pm r\in A_N$.
\item For every $\delta>0$ there is $C=C(\delta)\in\R^+$ such that for every $R\in[1,N]$, the set $A_N$ can be covered by $CN^\delta(N/R)^{(2n-1)/(2n)}$ intervals of length $R$.
\end{enumerate}
\end{lem}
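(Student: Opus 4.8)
The plan is to construct $A_N$ by a ``blocked base-$i$ digit'' construction, refining the bare-bones digit construction of Lemma~\ref{digit} so that we retain control over the \emph{scales} at which the set lives. Concretely, write $N \approx i^{2n}$ for the appropriate $i$, think of a number in $\di{1,N}$ as a string of $2n$ base-$i$ digits, and group these digits into $n$ consecutive pairs, the $m$-th pair occupying scales $i^{2m}$ and $i^{2m+1}$. As in Lemma~\ref{digit}, $A_N$ will consist of all integers (with digits allowed to range in $\di{2(1-i),2(i-1)}$) having a zero digit in at least one position; the shift $r$ for a given tuple $x_1,\dots,x_n$ is built by pulling the $2m$-th digit of $x_m$ into the ``even slot'' of block $m$ and the $(2m{+}1)$-st digit of $x_m$ into the ``odd slot,'' exactly as in the proof of Lemma~\ref{digit}. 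This gives property~(1) verbatim from that proof. The point of tracking the block structure is that it will let us read off, for each target scale $R$, how many free digits remain once we fix the ``high'' digits above scale $R$, and that count is what feeds the covering estimate.

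Next I would prove property~(2). Fix $R \in [1,N]$ and let $t$ be chosen so that $i^t \le R < i^{t+1}$ (so $t \approx \log_i R$, and $2n - t \approx \log_i(N/R)$). To cover $A_N$ by intervals of length $R$ it suffices to bound the number of distinct values taken by the digits of elements of $A_N$ in positions $t, t{+}1, \dots, 2n{-}1$ — i.e.\ the ``high part'' — since fixing the high part pins an element of $A_N$ down to an interval of length $O(i^t) = O(R)$. An element of $A_N$ has a zero somewhere among its $2n$ digit-slots; split into the case where the zero is in a high slot (position $\ge t$) and the case where it is in a low slot. In the first case, one of the roughly $2n - t$ high slots is zero and the others range over an alphabet of size $\Theta(i)$, giving $O\big((2n-t)\, i^{\,2n-t-1}\big)$ high parts, and by the block/pairing structure the ``wasteful'' constraint that the zero sits in a \emph{high} slot rather than anywhere saves essentially a factor $i^{1/2}$ per block on average — this is where the exponent $(2n-1)/(2n)$ enters, via $i^{\,2n - t - 1} = i^{\,2n-t}\cdot i^{-1}$ compared against the ``generic'' count $i^{\,2n-t}$ and the relation between $2n-t$ full slots and the $(2n-1)$-out-of-$2n$ savings. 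In the second case the zero is low, so all high slots are free: there are $O(i^{\,2n-t})$ high parts, but now I must further subdivide each resulting interval of length $\approx i^t \approx R$ — wait, rather: when the zero is low the high part is unconstrained, so I instead cover by first choosing the high digits freely ($i^{\,2n-t}$ ways, each giving an interval of length $\approx R$) — this already overshoots; the honest accounting is to note that the \emph{low} part then has a zero among $t$ slots, contributing a factor that does not help at scale $R$, so this case contributes $O(i^{\,2n-t})$ intervals of length $R$, which must be absorbed. To reconcile, the correct split is by \emph{which block} contains the guaranteed zero: if block $m$ contains it and block $m$ sits above scale $R$, we save; if it sits below, we don't, but then there are at most $O(\log_i N)$ choices of such a block and for each the count is dominated by $N^\delta (N/R)^{(2n-1)/(2n)}$ once the $N^\delta$ slack absorbs the $\log$ and the constant-factor bookkeeping.

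The main obstacle, then, is the careful combinatorial bookkeeping in property~(2): making precise the claim that forcing the distinguished zero digit to lie above scale $R$ costs a factor $(N/R)^{-1/(2n)}$ relative to the trivial bound $(N/R)$ on the number of high parts, uniformly in $R$, and folding the various $\log N$ and $2n$ factors into the $CN^\delta$ slack. I expect this to run parallel to \cite[Section 6.2]{op} but with the pairing-into-$n$-blocks replaced by a genuinely $2n$-digit/$n$-block structure, so the bound $i^{\,2n-1} = O(N^{(2n-1)/(2n)})$ on $|D_{i,n}|$ becomes the ``$R = 1$'' endpoint and the lemma interpolates between that endpoint and the trivial ``$R = N$'' bound of a single interval. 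Once the lemma is in hand, $A$ in Theorem~\ref{boxcon} will be assembled as a sum $\sum_i A_{N_i}$ (scaled) of these blocks with rapidly growing $N_i$, and the box-counting dimension computation will proceed as in the vertex construction, Theorem~\ref{vtxcon}, using part~(2) to get the upper bound on $\overline{\dim}_B$ at every scale and Lemma~\ref{nlbox} (with $\ell = n$, $k = 0$) together with the product rule to get the matching lower bound.
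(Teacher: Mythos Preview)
Your proposal has a genuine gap in property~(2), and your own hesitation in the middle of the argument (``this already overshoots\ldots must be absorbed'') is exactly where it breaks. A \emph{single-scale} set $A_N = D_{i,n}$ with $N \approx i^{2n}$ does \emph{not} satisfy the covering bound at intermediate scales. Take for instance $R = i$, so $t = 1$ in your notation. Elements of $D_{i,n}$ whose forced zero sits in the bottom slot have all $2n-1$ high digits free, so their high parts range over $\Theta(i^{2n-1})$ values and you genuinely need $\Theta(i^{2n-1})$ intervals of length $i$ to cover them. But the target is
\[
C\,N^{\delta}(N/R)^{(2n-1)/(2n)} \;=\; C\,i^{2n\delta}\,i^{(2n-1)^2/(2n)},
\]
and $i^{2n-1} \le C\,i^{2n\delta}\,i^{(2n-1)^2/(2n)}$ forces $\delta \ge (2n-1)/(4n^2)$, which is bounded away from zero. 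More generally, whenever the zero digit lies below scale $R$ you get $\Theta(i^{2n-t}) = \Theta(N/R)$ high parts, and the shortfall against $(N/R)^{(2n-1)/(2n)}$ is a genuine power of $N/R$, not a logarithm; the $N^\delta$ slack cannot absorb it. Your attempted rescue (``at most $O(\log_i N)$ choices of such a block'') misidentifies the overcount: the problem is not the number of blocks but the full $i^{2n-t}$ freedom in the high part once the constraint is spent below.

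The paper's construction is structurally different: it takes $N = (p!)^{2n}$ and sets
\[
A_N \;=\; (p!)^{2n}\sum_{i=1}^{p}\frac{D_{i,n}}{(i!)^{2n}},
\]
a \emph{nested sum} of rescaled digit sets at every scale $(i!)^{-2n}$, $i=1,\ldots,p$, rather than a single $D_{i,n}$. The point is that this sum is self-similar across scales: truncating at level $j$ (i.e.\ covering by intervals of length $R_j \approx (p!/j!)^{2n}$) costs $\prod_{i\le j}|D_{i,n}| = O(1)^j (j!)^{2n-1} = O(1)^j (N/R_j)^{(2n-1)/(2n)}$ intervals, and now the only excess is the geometric factor $O(1)^j$, which \emph{is} absorbed by $N^\delta$ since $j \le p = O(\log N/\log\log N)$. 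Interpolation between the discrete scales $R_j$ uses $\log(j{+}1)!/\log j! \to 1$. So the missing idea in your proposal is precisely this multi-scale stacking; once you adopt it, your plan for assembling $B$ in Theorem~\ref{boxcon} and computing $\overline{\dim}_B$ goes through as you describe.
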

\begin{proof}[Proof of Lemma]
We first consider $N$ of the form $(p!)^{2n}$ for some $p$. Here let
\[A_N=(p!)^{2n}\sum_{i=1}^p \frac{(D_{i,n}\cup (D_{i,n}-1))}{(i!)^{2n}}\]
where $D_{i,n}$ is as in Lemma \ref{digit}. Suppose $x_1,..., x_n\in\di{1,N}$. Write $x$ as
\[x_k=N \sum_{i=1}^p \frac{x_{k,i}}{(i!)^{2n}}\]
where $x_{k,i}\in \di{0,i^{2n}-1}$ and at least one $x_n$ is not $0$. For $1\leq i\leq p$, let $r_i\in \di{1,i^{2n}}$ be such that $x_{k,i}\pm r_i\in D_{i,n}$ for each $k$ and let
\[r=N\sum_{i=1}^p \frac{r_i}{(i!)^{2n}}.\]
Then, $x\pm r\in A_N$ and $r\leq N\sum_{i=1}^p \frac{1}{(i-1)!}\leq 3N.$

We now need to verify the covering property $(2)$. For $1\leq j\leq p$, $A_N$ can be covered in
\[2^n|D_{2,n}||D_{3,n}|...|D_{j+1,n}|=O(|D_{1,n}|...|D_{j,n}|)=O(1)^j(j!)^{2n-1}\]

intervals of length

\[(p!)^{2n}\sum_{m=j+1}^p \frac{(3m)^{2n}}{(m!)^{2n}}\leq \frac{3^{2n+1}(p!)^{2n}}{(j!)^{2n}}.\]

Define $R_j$ as $\frac{3^{2n+1}(p!)^{2n}}{(j!)^{2n}}$. We have just shown that $A_N$ can be covered by $O(1)^j(N/R_j)^{(2n-1)/(2n)}$ intervals of length $R_j$ (independent of $\delta$). Note that the $R_j$ are increasing and $R_p\geq 3^{2n+1}$. For general $R$ we interpolate as follows. By making $C$ large enough, we may assume $R\in[R_p=3^{2n+1}, N]$. Select $j$ such that $R_{j+1}<R<R_j$; $A_N$ can be covered by $O(N/R_{j+1}^{(2n-1)/(2n)})$ intervals of length $R$. Since $\frac{\log(j+1)!}{\log(j)!}\rightarrow 1$ as $j\rightarrow\infty$, for every $\delta>0$, there is some $C>0$ such that $R_j<CR_{j+1}^{1+\delta}\leq O(N^\delta)R_{j+1}$. This tells us $A_N$ can be covered by $O(N^{2\delta})(N/R)^{(2n-1)/(2n)}$ intervals of length $R$. 

For general $N$ we again interpolate between values of $(p!)^{2n}$ by using the fact that 
\[\lim_{p\rightarrow\infty} \frac{\log((p+1)!)}{\log(p!)}=1 .\]\end{proof}
\begin{proof}[Proof of Theorem \ref{boxcon}]
The case where $s=n$ is handled by Theorem \ref{pbcon}, so we may assume $s\not=n$. Let $\alpha>0$ be such that $s=\frac{n\alpha}{1+\alpha}$. For each $i\in \N$, let $N_i=\lfloor 2^{\alpha i}\rfloor$ and define
\[S_i=\di{1,N_i-1}^n,\quad B_i=\bigcup_{I\in \setc{n}{n-k}} \prod_{1\leq i\leq n} A'_i\]
where $A'_i=\left\{ \begin{array}{cc} A_{N_i} & i\in I \\ \;[-3N_i, 4N_i] & \textrm{otherwise} \end{array}\right.$ and $A_{N_i}$ is as in the above lemma. Let $\varepsilon_i=2^{-(1+\alpha)i}$ and define\bigskip

\[
S=\{(0,...,0)\}\cup \bigcup_{i\in \N} \left((2^{-i},0,...,0)+\varepsilon_iS_i\right)
\] \[
B=C_0\cup\{(0,...,0)\}\cup\bigcup_{i\in\N} \left( (2^{-i},0,...,0)+\varepsilon_iB_i\right),
\]where $C_0$ is the $k$-skeleton of a cube of unit side length centered at the origin. 

The set $S$ then consists of a sequence of translated shrinking copies of the discrete cube and $B$ contains the $k$-skeleton of a cube around each of these points, since $B_i$ contains the $k$-skeleton of an $n$-cube around each point in $S_i$. We then have that $B$ and $S$ satisfy $(1)$ in the statement of the theorem. It remains to show that these sets have the correct dimension.

We first verify that $\underline{\dim}_B(S)\geq s.$ $S$ contains a translate of $\varepsilon_iS_i$, so contains $|N_i|^n=\Omega(2^{n\alpha i})=\Omega(\varepsilon_i^{-s})$ points at pairwise distance at least $\varepsilon_i$. Interpolating an arbitrary $\varepsilon\in (0,1)$ between consecutive values of $\varepsilon_i$, we deduce that $\underline{\dim}_B(S)\ge s$.

We now show that $\overline{\dim}_B(B)\le \max\{k,(1-\frac{n-k}{2n^2})s\}+O(\delta)$ for any $\delta>0$. To get the desired estimate, we will count the number of cubes of side length $\varepsilon_i$ needed to cover $B$. First fix $i$ and and decompose $B$ as $B'_i\cup B''_i\cup B'''_i$, where
\[
B'_i=C_0\cup \bigcup_{j=1}^{i-1}\left((2^{-j},0,...,0)+\varepsilon_j B_j\right)
\]\[
B_i''=\bigcup_{j:\varepsilon_j\leq \varepsilon_i\leq N_j\varepsilon_j} \left((2^{-j},0,...,0)+\varepsilon_j B_j\right)
\]\[
B'''_i=\bigcup_{j:N_j\varepsilon_j<\varepsilon_i} \left((2^{-j},0,...,0)+\varepsilon_j B_j\right).
\]

We first count the $\varepsilon_i$ balls needed to cover $B'_i$. Note that for $j<i$, $\varepsilon_j B_j$ consists of discrete ${n\choose k}|A_{N_j}|^{n-k}$ $k$-cubes of side length $O(\varepsilon_jN_j)$. Since, by part $(2)$ of the above lemma applied to $R=1$, $|A_{N_j}|=O\left(2^{\alpha j(\frac{2n-1}{2n}+\delta)}\right)$, $\varepsilon_jB_j$ can be covered by 

\[
O(1)|A_{N_j}|^{n-k}\left(\frac{\varepsilon_jN_j}{\varepsilon_i}\right)^k=O(1)2^{j\alpha(\frac{2n-1}{2n}+\delta)(n-k)}2^{-jk}\varepsilon_i^{-k}=
\]\[
O(1)\varepsilon_i^{-k}2^{(\frac{(2n-1)(n-k)}{2n}\alpha-k)j+\delta(n-k)\alpha j}
\]balls of radius $\varepsilon_i$.  Straightforward calculation shows
\[\left(\frac{(2n-1)(n-k)}{2n}\right)\alpha-k\leq 0\]
exactly when
\[s\le \frac{2n^2k}{2n^2-(n-k)}=\frac{k}{1-\frac{(n-k)}{2n^2}},\]
and in this case (shrinking $\delta$ by a factor depending on $s$), $B'_k$ can be covered by $O(\varepsilon_i^{-k})$ balls of radius $\varepsilon_i$ (this means, more or less, that $k$ is the minimum dimension for $B$.). Otherwise, $s> \frac{2n^2}{2n^2-(n-k)}$ and $B'_i$ can be covered by 

\[
\sum_{j=1}^{i-1}O(1)\varepsilon_i^{-k}2^{(\frac{(2n-1)(n-k)}{2n}\alpha-k)j+\delta(n-k)\alpha j}\leq
\]\[
O(1)i\varepsilon_i^{-k}2^{(\frac{(2n-1)(n-k)}{2n}\alpha-k)i+\delta(n-k)\alpha }=
\]\[
O(1)i2^{(\frac{(2n-1)(n-k)}{2n}+k)\alpha i}2^{\delta(n-k)\alpha i}=O\left(\varepsilon_i^{-(\frac{(2n-1)(n-k)}{2n^2}+\frac{k}{n})s-O(\delta)}\right)
\]

We next count the $\varepsilon_i$ balls needed to cover $B''_i$. Suppose $\varepsilon_j<\varepsilon_i<\varepsilon_jN_j$. Again using the second part of the previous lemma, $A_{N_j}$ can be covered by \[O(N^\delta_j)(N_j/R)^{(2n-1)/(2n)}\] intervals of length $R$ for any $R$ in $[1,N_j]$. So, $B_j$ can be covered by \[O(N^{\delta}_j)(N_j/R)^{\frac{(2n-1)(n-k)}{2n}+k}\] balls of radius $R$, and $\varepsilon_j B_j$ can be covered by the same number of balls of radius $\varepsilon_j R$. Applying this to $R=\varepsilon_i/\varepsilon_j$ (which is in $[1, N_j]$ by assumption), we get $\varepsilon_jB_j$ can be covered by

\[O(N^{\delta}_j)\left(\frac{N_j\varepsilon_j}{\varepsilon_i}\right)^{\frac{(2n-1)(n-k)}{2n}+k}=O(1)\left(2^{(1-O(\delta))j}\varepsilon_i\right)^{-(\frac{(2n-1)(n-k)}{2n}+k)}\]

balls of radius $\varepsilon_i$. Taking $\delta$ small enough and summing over $j\geq i$ (via the geometric sum formula,) we can cover $B''_i$ by

\[O(1)\left(2^{(1-O(\delta))i}\varepsilon_i\right)^{\frac{(2n-1)(n-k)}{2n}+k}=O\left(\varepsilon_i^{-s(\frac{(2n-1)(n-k)}{2n^2}+\frac{k}{n})-O(\delta)}\right)\] 
balls of radius $\varepsilon_i$

Finally, we count the number of balls needed to cover $B'''_i$. The smallest $j$ such that $\varepsilon_jN_j<\varepsilon_i$ satisfies $2^{-j}\leq O(\varepsilon_i)$, so $B'''_i$ has diameter $O(\varepsilon_i)$, and can be covered by $O(1)$ balls of radius $\varepsilon_i$.

Putting the above estimates together, we get that $B$ can be covered by $O(\varepsilon_i^{-k})$ balls of radius $\varepsilon_i$ when $s(1-\frac{n-k}{2n^2})<k$ and by $O(1)\left(\varepsilon_i^{(1-\frac{n-k}{2n^2})s+O(\delta)}\right)$ balls of radius $\varepsilon_i$ otherwise. This implies that $\overline{\dim}_B(B)\leq \max\{k,(1-\frac{n-k}{2n^2})s\}+O(\delta)$ for small enough $\delta$. Letting $\delta$ tend to $0$ gives the desired result.
\end{proof}
\section{Hausdorff Results}\label{sec:haus}

The results for Hausdorff dimension are mostly trivial generalizations of the results in \cite{op}. We will formulate a conjecture based on these partial results. This conjectures was recently settled in the affirmative by Marianna Cs\"ornyei and Tam\'as Keleti.

\begin{thm}[Theorem \ref{int:hausbound} in the Introduction]\label{hausbound}
If $B,S\subseteq\R^n$ and $B$ contains the $k$-skeleton of a cube around every point in $S$, then $\dim_H(B)\geq \max\{\dim_H(S)-1,k\}.$ 
\end{thm}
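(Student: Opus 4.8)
The plan is to prove the two lower bounds $\dim_H(B)\ge k$ and $\dim_H(B)\ge \dim_H(S)-1$ separately, since the maximum of two lower bounds is again a lower bound. The first is immediate: since $B$ contains the $k$-skeleton of at least one cube (assuming $S\ne\emptyset$; the statement is vacuous otherwise), and a $k$-skeleton contains a $k$-dimensional face, which is a bi-Lipschitz copy of $[0,1]^k$, we get $\dim_H(B)\ge k$ from monotonicity of Hausdorff dimension. (If $S=\emptyset$ the inequality $\dim_H(B)\ge\max\{-1,k\}=k$ can only be claimed when it is interpreted appropriately, but one may simply note the interesting content is the case $S\ne\emptyset$, as in \cite{op}.)

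For the bound $\dim_H(B)\ge \dim_H(S)-1$, the natural approach is a projection/slicing argument, exactly as in \cite{op}. Write $B=\bigcup_{x\in S}C(x,r(x))$ where $C(x,r)$ is the $k$-skeleton of the cube of side $2r$ centered at $x$; without loss of generality $B$ is exactly this union. For each $x\in S$ and each choice of sign vector $\sigma\in\{-1,1\}^n$, the corner point $x+r(x)\sigma$ lies on a $0$-dimensional corner of the skeleton, hence in $B$. So the map $x\mapsto x+r(x)\sigma$ carries $S$ into $B$ — but this map need not be Lipschitz because $r(x)$ is arbitrary. The standard fix is to decompose: for a fixed direction, say $e_1$, consider the projection $\pi:\R^n\to\R$ onto the first coordinate, and for each value $t\in\R$ look at the slice $B_t=\{y\in B:\ \pi(y)=t\}$ sitting inside the hyperplane $\{x_1=t\}\cong\R^{n-1}$. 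The key geometric observation is: for every $x\in S$, the two hyperplanes $\{x_1 = x_1\pm r(x)\}$ each meet $B$ in a set containing the $k$-skeleton of an $(n-1)$-cube of the same side centered at $(x_2,\dots,x_n)$ — wait, more carefully, the faces of the $n$-cube's $k$-skeleton lying in those two extreme hyperplanes form the $(k)$-skeleton (or $(k-1)$-skeleton restricted appropriately) of the $(n-1)$-cube. For the purpose of a dimension bound it suffices that $B_{x_1+r(x)}$ contains at least the two opposite corner points $(x_2\pm r(x),\dots)$... actually the cleanest route: $B_{x_1+r(x)}$ contains an entire $(n-1)$-dimensional translate of $[-r(x),r(x)]^{n-1}$'s vertex set, in particular is nonempty, so $\pi(B)\supseteq\{x_1\pm r(x):x\in S\}=:P$.

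The real argument, following \cite{op}, uses that $P$ together with the slices "reconstruct" $S$: define $\phi:\R^2\to\R$ by $\phi(a,b)=\frac{a+b}{2}$ (recovering the center coordinate from the two extreme coordinates). Then project. I would set it up as: by Marstrand-type slicing / the inequality $\dim_H(S)\le \dim_H(\pi(S)) + \sup_t \dim_H(S\cap\pi^{-1}(t))$ combined with the fact that $\pi(S)$ has dimension at most $1$ (it lives in $\R$), so some slice $S\cap\{x_1=t_0\}$ has $\dim_H \ge \dim_H(S)-1$; now that slice is a subset of $\R^{n-1}$ and $B$ restricted to a suitable pair of hyperplanes contains the $k$-skeleton (down one dimension) of cubes around the points of that slice — giving $\dim_H(B)\ge \dim_H(B\text{-slice})\ge \dim_H(S\text{-slice})$ via the same corner-point containment... but this risks circularity. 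The cleanest honest version, which is what \cite{op} do: use the projection $p:\R^n\to\R^{n-1}$ forgetting $x_1$, note $p(S)$ has $\dim_H\ge\dim_H(S)-1$ is false in general — rather there's a slice argument showing $\dim_H(p^{-1}(z)\cap S)>0$ can't happen for too many $z$. I would ultimately mirror \cite{op} Lemma/Theorem: pick the coordinate direction realizing the worst case, use that $B\supseteq \{x_I + r(x)\sigma\}$, and apply the trivial bound $\dim_H(S) \le 1 + \dim_H(\Gamma)$ where $\Gamma = \{(x, r(x)) : x \in S\}$ is a graph whose image under the Lipschitz map $(x,r)\mapsto x+r\sigma$ sits in $B$, combined with $\dim_H(\Gamma) = \dim_H(S)$ since $S$ is a Lipschitz image of $\Gamma$ — no wait, $\Gamma\to S$ is Lipschitz but $S\to\Gamma$ is not, so $\dim_H(\Gamma)\ge\dim_H(S)$, and then $\dim_H(B)\ge \dim_H(\text{image})\ge \dim_H(\Gamma)-?$. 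The honest obstacle is exactly this: $r(x)$ is not controlled, so one cannot directly Lipschitz-embed $S$ into $B$; one must pass to a subset of $S$ on which $r$ is comparable to a constant (a countable decomposition $S=\bigcup_m S_m$ with $r|_{S_m}\in[2^{-m-1},2^{-m}]$), getting some $S_m$ with $\dim_H(S_m)=\dim_H(S)$, and on $S_m$ observe $x\mapsto (x + r(x)\sigma, x - r(x)\sigma)$ has image in $B\times B$ whose coordinates average back to $x$, so $S_m$ is a Lipschitz image (via averaging) of a subset of $B\times\{*\}$ for a suitable fixed near-constant difference — this is where the $-1$ enters, because fixing the near-constant value of $r$ costs one "free parameter."

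The main obstacle, then, is handling the non-Lipschitz dependence of the corner map on the radius function $r(x)$, and this is resolved by the countable-stability of Hausdorff dimension: decompose $S$ according to the dyadic scale of $r(x)$ and the dyadic cell containing $x$, reducing to the case where $r$ is essentially constant, at which point the corner map $x\mapsto x+r\sigma$ becomes a translation (isometry) embedding that piece of $S$ into $B$; but since we lose the ability to also vary $r$, the reconstruction of the full $S$ from one hyperplane-slice of $B$ only recovers $\dim_H(S)-1$. I expect the writeup to be short and to largely cite \cite{op} for the analogous planar argument, noting that the only change is replacing squares by cubes and vertices/boundary by $k$-skeleta, which does not affect the slicing structure.

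\begin{proof}[Proof sketch]
Assume $S\neq\emptyset$ (otherwise there is nothing to prove). Since $B$ contains the $k$-skeleton of some cube, and such a skeleton contains a bi-Lipschitz copy of $[0,1]^k$, monotonicity of Hausdorff dimension gives $\dim_H(B)\ge k$. It remains to show $\dim_H(B)\ge\dim_H(S)-1$, which we may assume is positive.

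Without loss of generality $B=\bigcup_{x\in S}C(x,r(x))$, where $C(x,r)$ denotes the $k$-skeleton of the axes-parallel cube of side $2r$ centered at $x$. For each sign vector $\sigma\in\{-1,1\}^n$ the corner $x+r(x)\sigma$ lies in $C(x,r(x))\subseteq B$. For $m\in\N$ and $z\in\Z^n$ let
\[
S_{m,z}=\{x\in S:\ 2^{-m-1}\le r(x)<2^{-m},\ \ \lfloor 2^{m}x\rfloor=z\},
\]
so that $S=\bigcup_{m,z}S_{m,z}$ is a countable decomposition. By countable stability of Hausdorff dimension there is a pair $(m,z)$ with $\dim_H(S_{m,z})=\dim_H(S)$; fix it and write $S'=S_{m,z}$. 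On $S'$ the function $r$ is constant up to a bounded factor, and $S'$ has diameter $O(2^{-m})$.

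Now fix the coordinate direction $e_1$ and let $\pi:\R^n\to\R$ be projection onto the first coordinate. The map $x\mapsto x_1+r(x)$ sends $S'$ into $\pi(B)\subseteq\R$, so $\pi(B)$ has a nonempty slice structure over an interval of length $O(2^{-m})$; by the standard slicing inequality $\dim_H(S')\le 1+\sup_{t}\dim_H\bigl(S'\cap\pi^{-1}(t)\bigr)$, there is $t_0$ with $\dim_H\bigl(S'\cap\pi^{-1}(t_0)\bigr)\ge\dim_H(S')-1=\dim_H(S)-1$. Write $S''=S'\cap\pi^{-1}(t_0)$, viewed inside $\R^{n-1}$ via the remaining coordinates. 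For each $x\in S''$ the two extreme hyperplanes $\{x_1=t_0\pm r(x)\}$ meet $B$ in a set containing (a translate of) the vertex set of an $(n-1)$-cube centered at $(x_2,\dots,x_n)$; in particular, selecting for each such $x$ one corner point in $B\cap\{x_1=t_0+r(x)\}$, we obtain a map $S''\to B$ which, because $r$ is comparable to the constant $2^{-m}$ on $S''$, differs from a fixed translation by a bounded-Lipschitz perturbation, hence is Lipschitz. Its image lies in $B$, so
\[
\dim_H(B)\ \ge\ \dim_H(S'')\ \ge\ \dim_H(S)-1.
\]
Combining with $\dim_H(B)\ge k$ yields $\dim_H(B)\ge\max\{\dim_H(S)-1,k\}$.
\end{proof}
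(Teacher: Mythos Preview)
Your argument has a genuine gap at the final Lipschitz step, and the paper's proof sidesteps the issue entirely with a much shorter projection trick.

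The fatal problem is the sentence ``because $r$ is comparable to the constant $2^{-m}$ on $S''$, [the corner map] differs from a fixed translation by a bounded-Lipschitz perturbation, hence is Lipschitz.'' Restricting $r$ to a dyadic shell $[2^{-m-1},2^{-m})$ makes $r$ \emph{bounded}, not Lipschitz. The perturbation $x\mapsto (r(x)-2^{-m})\sigma$ can still oscillate wildly, so $x\mapsto x+r(x)\sigma$ need not be Lipschitz on $S''$, and you cannot conclude $\dim_H(B)\ge\dim_H(S'')$. (There is also a secondary issue: the slicing inequality $\dim_H(S')\le 1+\sup_t\dim_H(S'\cap\pi^{-1}(t))$ is false as stated---a Weierstrass graph is a counterexample---though this could be repaired via Marstrand's slicing theorem. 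And countable stability only gives $\sup_{m,z}\dim_H(S_{m,z})=\dim_H(S)$, not attainment; that one is harmless after an $\epsilon$-argument.)

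The paper's proof avoids controlling $r$ altogether. Let $P$ be orthogonal projection onto the hyperplane normal to $(1,\dots,1)$. For each $x\in S$ the vertex $x+r(x)(1,\dots,1)$ lies in $B$ (the $k$-skeleton contains all vertices), and since $(1,\dots,1)\in\ker P$ we get $P(x)=P(x+r(x)(1,\dots,1))\in P(B)$ \emph{regardless of} $r(x)$. Thus $P(S)\subseteq P(B)$, whence
\[
\dim_H(S)\le\dim_H\bigl(P(S)\times\R\bigr)\le\dim_H(P(S))+1\le\dim_H(P(B))+1\le\dim_H(B)+1.
\]
The non-Lipschitz dependence on $r$ that you correctly flagged as the main obstacle is eliminated not by decomposing $S$, but by choosing a projection whose kernel is exactly the direction in which $r$ acts.
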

\begin{proof}
Let $P$ be the projection onto the plane normal to $(1,...,1)$. Since $B$ contains the vertices of a cube around each point in $S$, $P(S)\subseteq P(B)$. So, \[\dim_H(S)\leq \dim_h(P(S))+1\leq \dim_H(P(B))+1\leq \dim_H(B)+1.\]
\end{proof}

This bound seems quite weak, but it is sharp in the case when $n=2$, as shown in \cite{op}. We can extend this a bit further.

\begin{thm}
There is a set $B\subseteq\R^n$ which contains the boundary, i.e. the $(n-1)$-skeleton, of an $n$-cube around each point in $\R^n$ such that $\dim_H(B)=k$.
\end{thm}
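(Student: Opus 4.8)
The claim as stated uses ``$k$'' but should presumably read: there is $B\subseteq\R^n$ containing the $(n-1)$-skeleton of an $n$-cube around every point of $\R^n$ with $\dim_H(B)=n-1$. (When $n=2$ this recovers the $\dim_H=1$ boundary construction of \cite{op}.) The plan below targets that statement.

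\medskip

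The plan is to build $B$ as a countable union of pieces, each of which is a translate and dilate of a fixed ``gadget'' set, arranged so that around every point of $\R^n$ some gadget supplies a full $(n-1)$-skeleton, while every gadget — and hence the whole union, by countable stability of Hausdorff dimension — has dimension exactly $n-1$. The lower bound $\dim_H(B)\ge n-1$ is free: $B$ contains an $(n-1)$-skeleton of a single cube, which is a finite union of $(n-1)$-dimensional faces. So the entire content is the upper bound, i.e.\ producing, for each ``radius scale,'' a set containing the $(n-1)$-skeletons of cubes centered at a dense (in fact, all) set of points, yet still $(n-1)$-dimensional.

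\medskip

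First I would reduce to a one-dimensional statement. The $(n-1)$-skeleton of the cube $x+[-r,r]^n$ is $\bigcup_{i=1}^n \{y : y_i \in \{x_i-r, x_i+r\},\ y_j \in [x_j-r,x_j+r]\text{ for }j\ne i\}$, so each face is a product of $n-1$ copies of an interval with a single two-point set $\{x_i-r, x_i+r\}$ in one coordinate. Thus it suffices to find, for the single-coordinate data, a set $A\subseteq\R$ of Hausdorff dimension $0$ such that for every $t\in\R$ there is $r>0$ with $t\pm r\in A$ — then take $B=\bigcup_{i=1}^n \R^{i-1}\times A\times\R^{n-i}$ appropriately, except that a fixed $A$ cannot give a uniform $r$ working simultaneously in all $n$ coordinates. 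To fix this, I would instead let the radius depend on the point and stratify: partition $\R^n$ (or the unit cube, then tile) into countably many pieces on each of which a single radius $r$ can be used, and on each piece use the product of $n-1$ full intervals with a translate of a dimension-zero set. Concretely, for the construction around points in a cube $Q$ of side $\delta$, choosing $r=\delta$ works for all $x\in Q$ if we let the faces range over $\{x_i-\delta,x_i+\delta\}\subseteq A_Q$ where $A_Q$ is a dimension-zero subset of $\R$ containing $x_i\pm\delta$ for all $x_i$ in the projection of $Q$; such $A_Q$ exists because $\{x_i-\delta : x_i\in\pi_i(Q)\}\cup\{x_i+\delta\}$ is just an interval, which is not dimension zero — so this naive choice fails and I must be cleverer.

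\medskip

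The right fix, following the spirit of \cite{op} and the $D_{i,n}$ digit idea of Lemma \ref{digit}, is to use a \emph{Cantor-like} set of admissible radii and a self-similar construction in the single coordinate: find a compact $E\subseteq\R$ with $\dim_H E = 0$ such that $E - E \supseteq$ (a set dense enough, or all of, some interval), arranged so that for each $t$ in that interval there is $r$ with $t+r, t-r$ both in a \emph{translate of $E$ by an element of a dimension-zero set of translations} — then the union of those countably (or continuum-many, but organized into countably many dimension-zero strata) translates is still dimension zero. The cleanest implementation: work scale by scale exactly as in the box-counting construction of Section \ref{sec:boxcon} but push the parameters to the Hausdorff regime, i.e.\ replace the polynomially-sized digit sets by sets whose cardinality grows sub-polynomially relative to their separation scale, using the second part of Lemma 6.1 (the $\dim_H$ lower bound direction is not needed; only the $\overline{\dim}_B\le 0$-type control, applied countably often). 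The main obstacle — and the step I would spend the most care on — is precisely this: guaranteeing that around \emph{every} real $t$ one gets a valid radius $r$ with $t\pm r$ landing in the dimension-zero set, uniformly enough that the ``bookkeeping'' of which radius works where decomposes into only countably many dimension-zero pieces. I expect this to require a nested-intervals argument: at stage $m$ one has a finite set of intervals, each of length $\lambda_m\to 0$, such that every $t$ lies within $\lambda_m$ of an admissible sum $a+a'$ with $a,a'$ in the stage-$m$ set, and one refines so that the errors telescope; the set $A$ is then the limit, it has dimension $0$ by the standard estimate $\dim_H A \le \limsup \log(\#\text{stage }m)/\log(1/\lambda_m)=0$, and a compactness/limiting argument upgrades ``within $\lambda_m$'' to an exact radius $r$ for each $t$.
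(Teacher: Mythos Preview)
Your diagnosis of the typo is right: the intended conclusion is $\dim_H(B)=n-1$. You also correctly reduce to finding $A\subseteq\R$ with $\dim_H(A)=0$ such that for every $x_1,\dots,x_n\in\R$ there is $r>0$ with $x_i\pm r\in A$ for all $i$, and then setting $B=\bigcup_{i=1}^n \R^{i-1}\times A\times\R^{n-i}$.

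The gap is your assertion that ``a fixed $A$ cannot give a uniform $r$ working simultaneously in all $n$ coordinates.'' This is precisely what the paper achieves, in one line, via Baire category. Take $A\subseteq\R$ to be any \emph{comeager} set of Hausdorff dimension zero (such sets exist; e.g.\ the complement of a suitable dense $F_\sigma$ of full dimension). For any $x_1,\dots,x_n\in\R$, each of the $2n$ sets $A-x_i$ and $x_i-A$ is comeager (affine images of comeager sets are comeager), so their intersection
\[
\bigcap_{i=1}^n (A-x_i)\cap(x_i-A)
\]
is comeager, hence not contained in $\{0\}$; pick $r>0$ in it. Then $x_i\pm r\in A$ for all $i$ simultaneously, and the product rule gives $\dim_H(B)=(n-1)+\dim_H(A)=n-1$. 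That is the entire proof.

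Your proposed route --- stratifying $\R^n$, running a digit-type construction in the spirit of Lemma~\ref{digit}, and passing to a limit by nested intervals --- is not clearly wrong in principle (the sets $D_{i,n}$ do handle $n$ simultaneous coordinates, and one could try to push their parameters to force Hausdorff dimension zero in a limit), but it is far more work than needed, and your sketch does not actually resolve the simultaneity issue: your final paragraph treats only a single $t$, not an $n$-tuple $(x_1,\dots,x_n)$ sharing one $r$. The idea you are missing is simply that ``Hausdorff dimension zero'' and ``comeager'' are compatible, and comeager sets are stable under finite intersections and invertible affine maps.
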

\begin{proof}
Let $A\subseteq\R^\ell$ be a comeager set of Hausdorff dimension $0$. Since $\bigcap_{1\leq i\leq n} f_i(A)$ is comeager for any finite family of affine maps $\{f_i\}$, we have, for any $x_1,...,x_n\in\R$
\[(A-x_1)\cap (x_1-A) \cap (A-x_2)\cap (x_2-A)\cap...\cap (A-x_n)\cap (x_n-A)\not\subseteq \{0\}.\]

This means there is some $r\in\R^+$ such that $x_i\pm r\in A$ for all $1\leq i\leq n$. It then follows that $B=\bigcup_{1\leq i\leq n} \R^{i-1}\times A \times\R^{n-i}$ is as desired.
\end{proof}

Using the methods in \cite{op} and in Section \ref{sec:ortho}, one can make compact sets $B$ containing the boundary of a skeleton of a cube around each point in $[0,1]$ of hausdorff dimension $n-1$.

\begin{cor}[Theorem \ref{int:hauscon} in the Introduction]
For $0\leq k<n$, $s\in [n-k,n]$, there are $G_\delta$ $B,S\subseteq \R^n$ where $B$ contains the $k$-skeleton of an $n$-cube around each point in $S$, $\dim_H(B)=\max\{k,s-1\}$ and $\dim_H(S)=s$.

Further, if $k=0$, there are $B,S$ as above for $s\in[n-1,n]$.
\end{cor}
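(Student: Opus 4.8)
The plan is to combine the Hausdorff lower bound of Theorem~\ref{hausbound} with an explicit $G_\delta$ construction that meets it. The lower bound $\dim_H(B)\geq\max\{k,s-1\}$ is already available, so the entire content is the matching upper/construction side: for each $s\in[n-k,n]$ I need $G_\delta$ sets $B,S$ with $\dim_H(S)=s$, with $B$ containing the $k$-skeleton of some cube around each point of $S$, and $\dim_H(B)\leq\max\{k,s-1\}$.

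First I would handle the ``interesting'' range by a product/interleaving trick modelled on the packing and box-counting constructions of Sections~\ref{sec:packcon} and~\ref{sec:boxcon}: pick a compact (or $G_\delta$) set $T\subseteq\R$ together with a compact $A\subseteq\R$ such that for all $x_1,\dots,x_n\in T$ there is $r>0$ with $x_i\pm r\in A$ for all $i$ --- this is exactly the vertex construction, Theorem~\ref{vtxcon} --- and then set $S=T^{\,n}$ and $B=\bigcup_{I\in\setc{n}{n-k}}\prod_{i=1}^n A_{I,i}$ with $A_{I,i}=A$ if $i\in I$ and $A_{I,i}=\R$ otherwise, so that $B$ contains the $k$-skeleton of a cube around every point of $S=T^n$. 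For the dimension bookkeeping I would use the product rules, Theorem~\ref{dimprod}: $\dim_H(\R^k)=k$ contributes the $k$ factor, and the remaining $n-k$ coordinates contribute $(n-k)\dim(A)$. The key difference from the packing case is that here I want $\dim_H$ of the factor $A$ to be controlled from \emph{above}, so I should replace the Katona--Kruskal machinery with a direct covering estimate on $A$ (the kind of estimate already appearing in the Cantor-type lemma \cite{op}~Lemma~6.1), choosing the gap sequence $\beta_i$ so that $\dim_H(T)$ equals the prescribed value and $\dim_H(A)$ is as small as the counting allows; this forces $\dim_H(B)=\max\{k,\dim_H(S)-1\}$ when $\dim_H(S)\in[n-k,n]$.

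Second, for the case $k=0$ I would instead invoke the comeager/Baire-category construction from the preceding theorem in this section (the set $A\subseteq\R^\ell$ comeager of Hausdorff dimension $0$, so that $\bigcap_i f_i(A)$ is comeager and hence nonempty off $\{0\}$ for any finite family of affine maps). Intersecting $S$ with such a category argument lets one push the lower endpoint of the admissible range of $s$ down from $n$ to $n-1$: one takes $S$ to be a $G_\delta$ set of the prescribed dimension $s\in[n-1,n]$ and builds $B$ as a union of slabs $\R^{i-1}\times A\times\R^{n-i}$ (plus a base skeleton), each of which is $G_\delta$, so $B$ is $G_\delta$; then $\dim_H(B)\leq\max\{0,s-1\}=s-1$ because each slab has Hausdorff dimension $n-1\leq s$ in the worst coordinate direction and dimension $0$ in the $A$-direction --- again by the product rule --- while the lower bound $\dim_H(B)\geq s-1$ comes from Theorem~\ref{hausbound}.

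The main obstacle I expect is the upper bound $\dim_H(B)\leq\max\{k,s-1\}$ rather than the containment or the lower bound: when $s-1<k$ one needs the covering count of the ``skeleton'' factor $\R^k$ to genuinely dominate, and when $s-1\geq k$ one needs the one-dimensional factor set $A$ to have Hausdorff dimension exactly $\tfrac{(n-k)^{-1}}{\,}\bigl((s-1)-k\bigr)\cdot$(appropriate normalization) and not more --- i.e. the Cantor-type set must be engineered so that its \emph{Hausdorff} dimension, not merely its box dimension, hits the target. That is precisely the point where the construction is more delicate than the box-counting one, since Hausdorff dimension is not finitely stable under the crude interpolation used there; getting the gaps $\beta_i$ to decay fast enough (the condition $d_i+\delta_i\leq\delta_{i-1}$ in \cite{op}~Lemma~6.1(2)) while still realizing every $s\in[n-k,n]$ is the crux, and it is also why the full range $s\in[0,n]$ is only conjectured here and left to \cite{CK}.
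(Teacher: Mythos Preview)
Your approach diverges substantially from the paper's and contains a genuine gap. The paper does not attempt a direct one-dimensional Cantor-type construction of $A$ with small Hausdorff dimension; instead it bootstraps from two base cases --- the $k=n-1$ case (the preceding theorem, via the comeager set) and the $k=0$, $n=2$ case (cited as \cite[Theorem~3.5]{op}) --- and then simply takes products with Euclidean space: for $k>0$ one lifts $B',S'\subseteq\R^{k+1}$ to $B=B'\times\R^{n-k-1}$, $S=S'\times\R^{n-k-1}$, and for $k=0$ one lifts from $\R^2$. Because $\dim_H(\R^m)=\overline{\dim}_B(\R^m)=m$, the product rules of Theorem~\ref{dimprod} pin down the Hausdorff dimension of these products exactly, and the hard work (producing a two-dimensional $B'$ with $\dim_H(B')=\dim_H(S')-1$) is outsourced to \cite{op}.

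Your plan, by contrast, tries to build $A\subseteq\R$ directly with $\dim_H(A)$ strictly smaller than the value $\tfrac{2n-1}{2n}t$ that Theorem~\ref{vtxcon} delivers. This is the missing step: the Cantor lemma you invoke (\cite[Lemma~6.1]{op}) gives an \emph{upper} bound on $\overline{\dim}_B$ and a \emph{lower} bound on $\dim_H$, so ``choosing the gap sequence $\beta_i$'' cannot by itself force $\dim_H(A)<\overline{\dim}_B(A)$; and Lemma~\ref{nlbox} shows $\overline{\dim}_B(A)\geq\tfrac{2n-1}{2n}t$ is unavoidable, so a genuinely irregular construction (of the SPL or comeager type, not a homogeneous Cantor set) is required --- and you do not supply one. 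Even granting such an $A$, the product rule only traps $\dim_H(A^{n-k})$ in the interval $[(n-k)\dim_H(A),\,\dim_H(A)+(n-k-1)\overline{\dim}_B(A)]$, which need not collapse to the target $s-1-k$. Finally, your $k=0$ paragraph describes the $(n-1)$-skeleton construction (slabs $\R^{i-1}\times A\times\R^{n-i}$), not a vertex set; those slabs contain boundaries of cubes, not their $2^n$ corners, and each slab has Hausdorff dimension $n-1$ independent of $s$, so the argument as written only hits $s-1$ at the single value $s=n$.
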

\begin{proof}
The case where $n-1=k\not=0$ is implied by the previous theorem. The case where $n=2, k=0$ follows from \cite[Theorem 3.5]{op}. We can extend these results to constructions in higher dimensions simply by taking products with $\R$. Fix $n$. The constructions for each $k$ are nearly identical, but it is clearer to separate the case where $k=0$. 

Suppose $k=0$. For any $s\in [n-1,n]$, there are sets $S', B'\subseteq\R^2$ such that $\dim_H(S')=s-(n-2)$, $\dim_H(B')=s-(n-1)$ and $B'$ contains the vertices of a square around every point in $S'$. (Such sets are guaranteed to exist by the case when $n=2$). Take $B=B'\times \R^{n-2}$ and $S=S'\times \R^{n-2}$. Clearly $B$ contains the vertices of an $n$-cube around each point in $S$. By the product rules, Theorem 3.3, $\dim_H(S)=\dim_H(S')+(n-2)=s$ and $\dim_H(B)=\dim_H(B')+(n-2)=s-1$.

Suppose $k>0$. For any $s\in[n-k,n]$, there are $S', B'\subseteq \R^{k+1}$ such that $\dim_H(S')=s-(n-k-1)$, $\dim_H(B')=s-(n-k)$, and $B'$ contains the $k$-skeleton of a cube around each point in $S$ (these are guaranteed to exist by the case where $n=k+1$), and take $S=S'\times \R^{n-k-1}$ and $B=B'\times\R^{n-k-1}.$
\end{proof}

On the basis of these results, we conjecture that the bound $\dim_H(B)\geq \dim_H(S)-1$ is sharp in all cases. This was proven by Cs\"ornyei and Keleti using genericity arguments \cite{CK}.

\section{Orthoplex Vertex Problem}\label{sec:ortho}

It is fairly natrual to consider the vertex problem for the dual polytope of the cube, the orthoplex. That is how small can $B$ be and still contain the vertices of an orthoplex around each point in a set $S$ of a given size. We can find sharp bounds for each notion of dimension discussed above.

(The simplex does not have as clear an additive structure as the orthoplex or cube, and the faces of an orthoplex are not in general orthoplexes. So, the related questions for simplices and skeleta of orthoplexes are not quite as natural. Of course, these may turn out to have clean and interesting answers.)

\begin{thm}[Theorem \ref{int:orthobound} in the Introduction]
Let $B,S\subseteq\R^n$ such that for all $x\in S$ there is some $r\in\R^+$ such that $x\pm re_i\in B$, where $e_i$ is the $i^{th}$ standard basis vector. Then,

\begin{enumerate}
\item $\dim_H(B)\geq\dim_H(S)-1$,
\item If $B,S$ are finite, then $|B|\geq\Omega(|S|^{\frac{2n-1}{2n}})$,
\item $\underline{\dim}_B(B)\geq \frac{2n-1}{2n}\underline{\dim}_B(S)$ and $\overline{\dim}_B(B)\geq \frac{2n-1}{2n}\overline{\dim}_B(S)$, and
\item $\dim_P(B)\geq\frac{2n-1}{2n}\dim_P(S)$.
\end{enumerate}
\end{thm}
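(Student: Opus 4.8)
The plan is to derive parts (2)--(4) from the $n$-dimensional lemma, Lemma \ref{nlem}, applied with the standard basis $v_i=e_i$, in exactly the way the cube estimates of Section \ref{sec:estimates} were derived from Lemma \ref{nllem}, and to obtain part (1) by the projection argument of Theorem \ref{hausbound}. For (1), let $\pi\colon\R^n\to\R^{n-1}$ be the orthogonal projection forgetting the first coordinate. For each $x\in S$ the orthoplex hypothesis gives $r>0$ with $x+re_1\in B$, and since $\pi(x+re_1)=\pi(x)$ we get $\pi(S)\subseteq\pi(B)$. A codimension-one coordinate projection satisfies $\dim_H(A)\le\dim_H(\pi(A))+1$ for every $A$ (as in the proof of Theorem \ref{hausbound}; one can write $\pi^{-1}(\pi(A))=\pi(A)\times\R$ and apply the product rule, Theorem \ref{dimprod}(1), on the pieces $\pi(A)\times[j,j+1]$), so $\dim_H(B)\ge\dim_H(\pi(B))\ge\dim_H(\pi(S))\ge\dim_H(S)-1$.

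Part (2) is an immediate instance of Lemma \ref{nlem}: the assertion that $B$ contains the vertices of an orthoplex around each point of $S$ says precisely that for every $x\in S$ there is $r>0$ with $x\pm re_i\in B$ for $1\le i\le n$, which is the hypothesis of Lemma \ref{nlem} with $v_i=e_i$. Hence $|B|\ge\Omega(|S|^{(2n-1)/2n})$ for finite $B,S$.

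For (3), I would discretize as in the proof of Theorem \ref{boxbound}. Fix a scale $2^{-m}$ and, shrinking $B$, assume $B=\{x\pm r(x)e_i:x\in S,\ 1\le i\le n\}$ for some choice function $x\mapsto r(x)$. Let $\widehat S,\widehat B\subseteq\Z^n$ be the images of $S$ and $B$ under the coordinatewise floor of $2^m(\cdot)$, so $|\widehat S|\asymp N_{2^{-m}}(S)$ and $|\widehat B|\asymp N_{2^{-m}}(B)$, and put $\widehat B'=\widehat B+\{-1,0,1\}^n$, whose cardinality is still $\asymp N_{2^{-m}}(B)$. Rounding moves each vertex $x\pm r(x)e_i$ by at most one grid step in coordinate $i$ and not at all in the others, so: for $x\in S$ with $r(x)<2^{-m}$ the cell $\lfloor 2^m x\rfloor$ lies in $\widehat B'$; and for $x\in S$ with $r(x)\ge 2^{-m}$, writing $\widehat x=\lfloor 2^m x\rfloor$ and $\rho=\lfloor 2^m r(x)\rfloor\ge1$, we have $\widehat x\pm\rho e_i\in\widehat B'$ for all $i$. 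Applying Lemma \ref{nlem} (a purely finitary statement, here used in $\Z^n$) to the set of cells of the second type and to $\widehat B'$ with $v_i=e_i$, and combining with the trivial bound from the cells of the first type, gives $N_{2^{-m}}(S)\le O\!\left(N_{2^{-m}}(B)^{2n/(2n-1)}\right)$; inserting this into the definition of box dimension exactly as in Theorem \ref{boxbound} yields $\underline{\dim}_B(B)\ge\frac{2n-1}{2n}\underline{\dim}_B(S)$, and likewise for $\overline{\dim}_B$.

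Finally, (4) follows from (3) by the bootstrap that takes Lemma \ref{nlbox} to Lemma \ref{nldim}: given any ascending decomposition $B=\bigcup_i B_i$, let $S_i$ be the set of $x\in S$ all $2n$ of whose chosen orthoplex vertices lie in $B_i$; then $S=\bigcup_i S_i$ is ascending, part (3) gives $\overline{\dim}_B(B_i)\ge\frac{2n-1}{2n}\overline{\dim}_B(S_i)$, and Theorem \ref{dimequiv} gives $\sup_i\overline{\dim}_B(S_i)\ge\dim_P(S)$, whence $\sup_i\overline{\dim}_B(B_i)\ge\frac{2n-1}{2n}\dim_P(S)$; taking the infimum over decompositions and applying Theorem \ref{dimequiv} once more gives $\dim_P(B)\ge\frac{2n-1}{2n}\dim_P(S)$. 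The only genuinely new work is in (3): reconciling the single common radius $r(x)$ with the integer grid, which the bounded thickening $\widehat B'$ absorbs, and disposing of centers whose radius is below the current resolution, which is harmless because each such center forces only its own cell into $\widehat B'$. Everything else is a direct transcription of the corresponding cube argument.
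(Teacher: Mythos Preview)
Your proof is correct and follows essentially the same route as the paper, which simply points to the corresponding cube arguments: (1) by a codimension-one projection as in Theorem \ref{hausbound}, (2) as a direct instance of Lemma \ref{nlem}, and (3)--(4) by the discretize-then-bootstrap scheme of Lemma \ref{nlbox} and Lemma \ref{nldim}. The only cosmetic differences are that you project along $e_1$ rather than along $(1,\dots,1)$ (the natural choice for the orthoplex), and that in (3) you round with floors and therefore split off the centers with $r(x)<2^{-m}$ and thicken to $\widehat B'$, whereas the paper's dyadic-center rounding $\widetilde{r(x)}_m$ automatically produces a positive discretized radius; both devices serve the same purpose and the rest is identical.
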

\begin{proof}
Precisely the same argument as for the Hausdorff bound of the cube problem (Theorem \ref{hausbound}) gives $(1)$. The $n$-dimensional lemma, Lemma \ref{nlem} gives $(2)$. And, $(2)$ gives $(3)$ and $(4)$ exactly as Theorem \ref{nllem} gives Lemma \ref{boxbound} and Corollary \ref{nldim}.  
\end{proof}

We can show that all of these bounds are sharp.

\begin{thm}[Theorem \ref{int:orthocon} in the Introduction] \label{orthocon}
For any $n, p\in \N$ and $s\in[0,n]$, we can find sets $B_H, B_B, B_P, B_f, S_H, S_B, S_P,$ and $S_f\subseteq \R^n$ such that $B_X$ contains the vertices of an orthoplex around each point in $S_X$ for any $X\in\{H,B,P, f,\}$, $\dim_X(S_X)=s$ for $X\in\{H,B,P\}$, $|S_f|=p$, and
\begin{enumerate}
\item $\dim_P(B_P)\leq \frac{2n-1}{2n}\dim_P(S_P)$
\item $\overline{\dim}_B(B_B)\leq \frac{2n-1}{2n} \underline{\dim}_B(S_B)$
\item $|B_f|\leq O(|S_f|^{2n-1}{2n})$
\item $\dim_H(B_H)\leq \dim_H(S_H)-1$.
\end{enumerate}

\end{thm}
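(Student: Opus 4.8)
The plan is to build each of the four sharp examples by essentially the same recipe used throughout the paper: start from a one-dimensional "digit" gadget, take products to reach dimension $n$, and glue shrinking translated copies together to interpolate all intermediate values of $s$. The key simplification for the orthoplex is that the sharpness constructions for the \emph{cube} vertex problem already produce, for any target $t\in[0,1]$, compact $A,T\subseteq\R$ with $\dim T=t$ and $\dim A=\frac{2n-1}{2n}t$ (for box, packing, and Hausdorff dimension) such that every $n$-tuple from $T$ admits a common radius $r$ with $x_j\pm r\in A$ — this is exactly Theorem \ref{vtxcon} (the Vertex Constructions theorem) together with Lemma \ref{digit}. But an orthoplex around $x\in\R^n$ only demands $x\pm re_i\in B$ for the \emph{single} coordinate direction $e_i$, i.e. only the $i$-th coordinate of $x$ needs $x_i\pm r\in B_i$. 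So we do not even need the full-strength tuple gadget: if $A_0\subseteq\R$ is a single set with $\dim A_0 = \frac{2n-1}{2n}t$ such that for every $n$ reals $y_1,\dots,y_n\in T_0$ there is a common $r$ with each $y_j\pm r\in A_0$, then $B := \bigcup_{i=1}^n \R^{i-1}\times A_0 \times \R^{n-i}$ contains the orthoplex (with that same $r$) around every point of $S:=T_0^n$.

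First I would record this reduction precisely, then plug in the appropriate one-dimensional pair for each notion of size. For $X=H$: take $T_0,A_0\subseteq\R$ with $\dim_H T_0 = s-(n-1)$ when $s\ge n-1$ (and handle $s<n-1$ by a comeager-set argument as in Section \ref{sec:haus}, since there $\dim_H S$ can only be forced to exceed $B$'s dimension by $1$), set $S_H = T_0\times\R^{n-1}$ and $B_H$ as above but with the nontrivial slot of Hausdorff dimension $s-n$... more cleanly: mimic the proof of Corollary (Theorem \ref{int:hauscon}) — use the planar sharp example from \cite{op} for the square-vertex problem crossed with $\R^{n-2}$ — but now only one coordinate line of the "square" needs to land in the target, which is even easier. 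For $X\in\{B,P,f\}$: take the digit set $D_{i,n}$ from Lemma \ref{digit}, which by construction already has the one-line property ($x_j\pm r\in D_{i,n}$ for all $j$ simultaneously) and size $O(i^{2n-1})$ inside $\di{-(2i)^{2n},(2i)^{2n}}$; for the finite case set $S_f = \di{1,i^{2n}-1}^n$ (truncated to size exactly $p$ by interpolating on $i$ as in Corollary \ref{nlcon}) and $B_f = \bigcup_i \R^{i-1}\times D_{i,n}\times\R^{n-i}$ intersected with the relevant bounded box, giving $|B_f| = O(n\cdot i^{2n-1}) = O(p^{\frac{2n-1}{2n}})$ up to adjusting ambient integer scaling; for box and packing dimension, scale and sum copies $\frac{\beta_i}{i^{2n}}D_{i,n}$ exactly as in the proof of Theorem \ref{vtxcon}, then glue shrinking translated blocks as in the proof of Theorem \ref{boxcon} (the sets $S=\{0\}\cup\bigcup_i((2^{-i},0,\dots,0)+\varepsilon_i S_i)$) to realize every $s\in[0,n]$, invoking the generalized Cantor-construction lemma (\cite{op} Lemma 6.1) for the upper dimension bounds on the $A_0$-factor.

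The upper bounds $\dim(B_X)\le \frac{2n-1}{2n}\dim(S_X)$ then follow from the product rules, Theorem \ref{dimprod}: $B$ is a finite union of sets each of which is a product of one copy of $A_0$ with factors of $\R$, so $\overline{\dim}_B(B)\le \overline{\dim}_B(A_0)$ and $\dim_P(B)\le \dim_P(A_0)$, and $\dim_P(A_0)=\dim_B(A_0)=\dim_H(A_0)=\frac{2n-1}{2n}t$ is controlled by the one-dimensional construction; since $\dim(S)=\dim(T_0^n)=n\dim(T_0)=nt=s$ by the matching product-rule computation, this is exactly the claimed ratio. The main obstacle, as in \cite{op} and in Theorem \ref{boxcon} above, is the box-counting case: controlling the \emph{upper} box dimension of the glued set $B_B$ requires carefully counting $\varepsilon_i$-balls covering the three pieces $B'_i, B''_i, B'''_i$ of the shrinking union and checking the geometric sums close up — but here the bookkeeping is strictly easier than in Theorem \ref{boxcon} because there is no $k$-skeleton (equivalently $k=0$ and only one factor is $A_0$ rather than $n-k$ of them), so the estimate from the $k=0$ case of that proof transfers essentially verbatim. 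Everything else is routine given the lemmas already proved.
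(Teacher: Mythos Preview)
Your construction for the packing, box-counting, and finite cases has a basic but fatal error in the dimension/cardinality count. You propose
\[
B \;=\; \bigcup_{i=1}^n \R^{i-1}\times A_0 \times \R^{n-i},
\]
and then assert that ``$\overline{\dim}_B(B)\le \overline{\dim}_B(A_0)$'' and ``$|B_f|=O(n\cdot i^{2n-1})$''. But each summand $\R^{i-1}\times A_0\times\R^{n-i}$ already contains an $(n-1)$-plane, so $\dim B \ge n-1+\dim A_0$, not $\dim A_0$; in the finite case, intersecting with a box of side $\asymp i^{2n}$ gives $|B_f|\asymp i^{2n-1}\cdot (i^{2n})^{n-1}=i^{2n^2-1}$, far larger than the target $p^{(2n-1)/(2n)}=i^{2n^2-n}$. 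The underlying obstruction is structural: the orthoplex vertex $x\pm r e_i$ leaves the other $n-1$ coordinates of $x$ unchanged, so any product-type $B$ must contain all of $S$ in those $n-1$ directions, and hence cannot be smaller than $S$ by the required factor. Replacing the $\R$ factors by $T_0$ does not help either: one computes $\dim(T_0^{n-1}\times A_0)=\frac{2n^2-1}{2n^2}s>\frac{2n-1}{2n}s$.

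The paper avoids this by a one-line linear-algebra trick you are missing: a cube around $x$ has vertices $x\pm r\sigma$ for \emph{every} $\sigma\in\{-1,1\}^n$, and one can choose $n$ of these $\sigma$'s to be linearly independent (e.g.\ the vectors with a single $-1$). Letting $g$ be the linear isomorphism sending this basis to $e_1,\dots,e_n$, the images $g(B'_X),g(S'_X)$ of the already-established \emph{cube} vertex constructions (Corollary after Theorem~\ref{vtxcon}, Theorem~\ref{boxcon}, Corollary~\ref{disccon}) are immediately sharp orthoplex constructions, with all dimensions and cardinalities preserved. No new estimates are needed for parts (1)--(3). For part (4) the linear-map trick only yields $s\in[n-1,n]$ (the range in which the cube Hausdorff construction is known), so the paper instead builds $B_H,S_H$ directly for all $s\in[1,n]$ via the splicing operator $SPL$ from \cite{op}; your Hausdorff sketch is too vague to evaluate, but note that simply crossing with $\R^{n-2}$ will not extend the range below $s=n-1$.
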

\begin{proof}[Proof for discrete, box-counting, and packing dimension.] A cube has vertices in at least $n$ independent directions from the center, so we may take the vertex constructions for the cube and apply a linear map to get constructsion for (2),(3), and (4). More explicitly, there are $B'_X$ and $S'_X$ for $x\in \{P,B,f\}$ satisfying the conditions $(1), (2), $ and $(3)$ where $B'_X$ contains the vertices of a cuber around each point in $S'_X$ (Theorems 5.4, 6.1, and 2.3 respectively). There is a basis $b_1,...,b_n$ for $\R^n$ such that $x\pm b_i\in B'_X$ for every $x\in S'_X$ and $1\leq i\leq n$. Let $g$ be the invertible linear function defined by $g(b_i)=e_i$ Then, $S_X=g(S'_x)$ and $B_X=g(B'_X)$ is as desired.
\end{proof}

To get a construction for the Hausdorff dimension, we will generalize the arguments \cite[Section 3]{op}. The important points are the splicing operation $SPL$, which takes a sequence of sets in some $\R^m$ and returns a set in $\R^m$, and a method for constructing sets with strong intersection properties. 

\begin{lem} \label{spllem}
Let $X_i, Z_i, Y_i\subseteq \R^m$ for some $m$ and all $i\in\N$.

\begin{enumerate}
\item $\min_{n\in\N}\dim_H((Z_i))\leq\dim_H(\dim_H(SPL((Z_i)_{i\in\N})\leq\min_{i\in\N}\overline{\dim}_B(Z_i)$, provided there are only finitely many distinct $Z_i$ and each occurs infinitely often, and
\item $SPL((X_i\times Y_i)_{i\in\N})=SPL((X_i)_{i\in\N})\times SPL((Y_i)_{i\in\N})$.

\end{enumerate}

\end{lem}

These properties of $SPL$ are established in \cite[Lemma 3.7]{op} and the remarks preceding it.

\begin{lem}
For any compact set of affine maps on $\R$, $F$, there are $K_p\subseteq \R$ and affine $g_p$, $0<p\leq M$, such that
\begin{enumerate}
\item $A:= \bigcup_{p=1}^M g_p(K_p)$ satisfies $\bigcap_{f\in F'} f(A)\not\subseteq \{0\}$ for any finite $F'\subseteq F$, and
\item $K_p\subseteq\bigcup_{k\in\Z} (SPL((X^{(p)}_i))+k)$ where

\[X_i^{(p)}=\left\{\begin{array}{cc} \{0\} & \textrm{if }\exists j:\;i=(2j-1)2^{p} \\ \;[0,1] & \textrm{otherwise}\end{array}\right..\]
\end{enumerate}
\end{lem}

The above Lemma is implied by the proofs of Propositions 3.1 and 3.6 in \cite{op}.

\begin{proof}[Proof of \ref{orthocon} for Hausdorff dimension.]
We may assume $s\in [1,n]$. Let $B$ be any compact subset of $[0,1]$ such that $\dim_H(B)=\dim_B(B)=\frac{s-1}{n-1}$. Let $F$ be the set of functions 
\[F=\{x\mapsto y-x:y\in B\}\cup\{x\mapsto x-y:y\in B\}\]
and let $A$ be the corresponding set given by part 1 of the previous lemma. For $0<q\leq n$, let 

\[C^{(q)}_i=\left\{\begin{array}{cc} I & \exists j,p: 2^pj=i\; and\; j=2q-1\;(mod\;2n)\\ B & otherwise  \end{array}\right..\] 

Let $C^{(q)}=SPL((C^{(q)}_i)_{i\in\N})$ and define

\[
S_H=\prod_{1\leq p\leq n}C^{(p)} 
\] 
and
\[
B_H=\bigcup_{\pi\in \Sigma_n}\bigcup_{i\in\di{1,n}} \pi\left(A\times\prod_{q\not=i} C^{(q)}\right)
.\] Then, since for any $x_1,...,x_n\in \bigcup_{1\leq q\leq n} C^{(q)}$
\[(A-x_1)\cap (x_1-A) \cap (A-x_2)\cap (x_2-A)\cap...\cap (A-x_n)\cap (x_n-A)\not\subset \{0\},\]
if $x\in S_H$, $1\leq i\leq n$, there is some $r\in\R^+$ such that $x\pm re_i\in B_H,$ as desired. (Note that it follows that $\dim_H(B_H)\geq \dim_H(S_H)-1.$)

We have that $\prod_{1\leq p\leq n}C_i^{(p)}$ is (up to a permutation) $B^m\times I^{n-m}$ for some $m<n$, and $B^{n-1}\times I$ occurs infinitely often. So, by the properties of $SPL$, Lemma \ref{spllem}, 

\[\dim_H(S_H)=(n-1)\dim_H(B)+1=s.\]

We also have that, for any $I\in\setc{n}{n-1}$, $X^{(p)}_i\times \prod_{q\in I} C^{(q)}_i$ is (up to a permutation) one of $\{0\}\times B^{n-1}$, $\{0\}\times I\times B^{n-2}$, $I\times B^{n-1}$, or $I^2\times B^{n-2}$, and each of these shows up infinitely often. So $\dim_H(SPL((X^{(p)}_i))\times \prod_{q\in I} C^{(q)})=(n-1)\dim_H(B)=s-1$. Finally,
\begin{align*}\dim_H(B_H)&=\max_{1\leq i\leq n} \dim_H(A\times\prod_{p\not=i} C^{(p)})\\
\;& =\max_{1\leq i\leq n}\max_{1\leq j\leq M} \dim_H(K^{(j)}\times\prod_{p\not=i} C^{(p)}) \\
\; &\leq \max_{1\leq i\leq n}\max_{1\leq j\leq M} \dim_H(SPL((X^{(j)}_\ell))\times\prod_{p\not=i} C^{(p)})\\
\; &= s-1  
\end{align*}

\end{proof}

\end{document}